\numberwithin{equation}{section} 
\renewcommand{\thefootnote}{\*}
\newcommand{\R}{\mathbb{R}} 
\newcommand{\N}{\mathbb{N}} 
\newcommand{\E}{\mathbb{E}} 
\newcommand{\sL}{\mathrm{L}}
\newcommand{\sH}{\mathrm{H}}
\newcommand{\sV}{\mathrm{V}}
\newcommand{\sK}{\mathrm{K}}
\newcommand{\dd}{\mathrm{d}} 
\newcommand{\dds}{\mathrm{d}s}
\newcommand{\ddt}{\mathrm{d}t}
\newcommand{\ddx}{\mathrm{d}x}
\newcommand{\Div}{\mathrm{div\,}} 
\newcommand{\qspace}{Q} 
\newcommand{\Prj}{\mathcal{P}} 
\newcommand{\Prb}{\mathbb{P}} 
\newcommand{\Tr}{\mathrm{Tr}} 
\newcommand{\la}{\left\langle}
\newcommand{\ra}{\right\rangle}
\newcommand{\lp}{\left(}
\newcommand{\rp}{\right)}
\newcommand{\Cp}{C_{\mathcal{P}}} 
\newcommand{\Ca}{\Sigma} 
\newcommand{\Cb}{\Xi} 
\newtheorem{Th}{Theorem}[section]
\newtheorem{Def}[Th]{Definition}
\newtheorem{Coro}[Th]{Corollary}
\newtheorem{Le}[Th]{Lemma}
\newtheorem{Prop}[Th]{Proposition}
\newtheorem{Rem}[Th]{Remark}
\newtheorem{Hyp}[Th]{Hypothesis}
\title{Stochastic phase field $\alpha$-Navier-Stokes vesicle-fluid interaction model.}
\author{Ludovic Gouden\`ege\footnotemark[1] \ and Luigi Manca\footnotemark[2]}
\date{}
\begin{document}

\maketitle
\renewcommand{\thefootnote}{\*}
\footnotetext[0]{\!\!\!\!\!\!\!\!\!\!\!AMS 2000 subject classifications. 60H15, 60H30, 37L55, 35Q30, 35Q35, 76D05\\ 
{\em Key words and phrases} : Navier-Stokes, Camassa-Holm, Lagrange Averaged alpha, stochastic partial differential equations, 
vesicle, fluid, interaction model.\\}
\renewcommand{\thefootnote}{\fnsymbol{footnote}}
\footnotetext[1]{CNRS, Fédération de Mathématiques de CentraleSupélec FR 3487, CentraleSupélec, 91190 Gif-sur-Yvette, France, goudenege@math.cnrs.fr}
\footnotetext[2]{LAMA, Université Paris-Est - Marne-la-Vallée, 77454 Marne-la-Vall\'ee, France, luigi.manca@u-pem.fr}

\begin{abstract}
We consider a stochastic perturbation of the phase field alpha-Navier-Stokes model with vesicle-fluid interaction.
It consists in a system of nonlinear evolution partial differential equations modeling the fluid-structure interaction associated to the dynamics of an elastic vesicle immersed in a moving incompressible viscous fluid. This system of equations couples a phase-field equation -for the interface between the fluid and the vesicle- to the alpha-Navier-Stokes equation -for the viscous fluid- with an extra nonlinear interaction term, namely the bending energy.

The stochastic perturbation is an additive space-time noise of trace class on each equation of the system. We prove the existence and uniqueness of solution in classical spaces of $\sL^{2}$ functions with estimates of non-linear terms and bending energy. It is based on a priori estimate about the regularity of solutions of finite dimensional systems, and tightness of the approximated solution.
\end{abstract}

\newpage

\section*{Introduction and main results}

This paper is devoted to study a random perturbation of the equations governing the dynamic of an elastic vesicle immersed in a moving incompressible viscous fluid, whose deterministic model have been studied in \cite{DuLiLiu} and \cite{EntringerBoldrini}.\\

According to \cite{Seifert}, these equations are key research in the study of the dynamics of cells in fluid media.
This type of models are of crucial importance in biology, where the analysis of the deformation of vesicles immersed in fluids is central topic.
In particular we can refer to the articles on the biological aspects (see \cite{AbkarianLartigueViallat,BeauncourtRioualSionBibenMisbah,BibenKassnerMisbah,DuLiuRyhanWang:05a,DuLiuRyhanWang:05b, DuLiuRyhanWang:05c, LiuTakahashiTucsnak}). 
In all these articles there is a common idea about usefulness of phase field approaches.
The phase field approaches, compared to sharp interface models, are natural ways to include several important physical aspects of the phenomenon being considered, without complexity of the free-boundary value problems, both in the theoretical and numerical aspects.\\

First consider the $\alpha$-Navier-Stokes equation which reads, on the time interval $[0,T]$, on smooth, open and bounded space domain $\qspace\subset \R^{N}$ in dimension $N=2$ or $3$, with $\nu$ the constant viscosity, and $\rho$ the constant density of the incompressible fluid:
\begin{equation} \label{eq.aNS}
\begin{cases}
\partial_t u+ (w \cdot \nabla) u + (\nabla w)^{T} \cdot u+\frac{1}{\rho}\nabla p = f +\nu \Delta u ,\\
u=w - \alpha^{2}\Delta w - \nabla q,\\
\Div (u)=\Div(w)=0.
\end{cases}
\end{equation}
where $\Delta$ is the Laplace operator and $f$ is the forcing.
The unknowns\footnote {With $u=(u_{1},u_{2},u_{3})$ in dimension $N=3$ or $u=(u_1,u_2)$ in dimension $N=2$.} are the random fields $p$ and $u$ (also $q$ and $w$), which respectively represent the (modified\footnote{Here the modified or hydrodynamic pressure satisfies $p=\pi - \frac{\rho}{2} |w|^{2}$ where $\pi$ is the  pressure.})
pressure and the averaged velocity vector field of the point $x$ at time $t$. Both unknowns $u$ and $w$ have homogeneous Dirichlet boundary conditions, and the pressures $p$ and $q$ are defined up to an additive term which could be used to stay divergence free.
This model takes part of a general class of regularized models for high Reynolds number flows, firstly proposed by Leray in \cite{Leray:34a,Leray:34b} for Euler equations.
Some authors stress that, from the biological point of view,
the $\alpha$-Navier-Stokes type equations are relevant since they are adequate for flows with high Reynolds number (like in turbulence),
which may occur in some biological situations.
This model is also known as viscous Camassa-Holm or the Lagrangian Averaged Navier-Stokes-$\alpha$ (LANS-$\alpha$) model.
These models have been introduced by Holm, Marsden and Ratiu in \cite{HolmMarsdenRatiu:98a,HolmMarsdenRatiu:98b}. 
It has been studied in the deterministic case by Foias, Holm and Titi (see \cite{FoiasHolmTiti:01,FoiasHolmTiti:02})
which have obtained the necessary estimations about the non-linear term in the Navier-Stokes equation in periodic domain.
There are also works in alternative conditions about domain and boundary conditions in \cite{BjorlandSchonbek, Caglar, GuermondOdenPrudhomme}.
See also \cite{MarsdenShkoller} for the link between Camassa-Holm and LANS-$\alpha$ models.\\

In \cite{DuLiLiu} and \cite{EntringerBoldrini} the authors have considered $\alpha$-Navier-Stokes model for the fluid coupled with a phase field equation for the membrane of the vesicle. They have introduced a forcing term $f$ which is a non-linear additive term depending of the phase field unknown.
The form of the interaction is given by the variational derivative of a bending energy of the membrane of the vesicle.
We obtain a system of interaction in the space-time domain $[0,T]\times \qspace$ between the fluid and the phase-field equations under the form : 
\begin{equation} \label{eq.aNSPF}
 \begin{cases}
\partial_t (w+\alpha^2Aw)+\nu A(w+\alpha^2 \Delta w)+ \widetilde{B}(w,w+\alpha^2 A w)
=\Prj\left(\dfrac{\delta E(\phi)}{\delta \phi}\nabla \phi\right),\\
  \Div (w)=0, \\
 \phi_t+w\cdot \nabla \phi = -\gamma \dfrac{\delta E(\phi)}{\delta \phi},
 \end{cases}
\end{equation}
where $\phi$ is the phase field unknown/order parameter which describes the membrane of the vesicle, with the linear Stokes operator $A=-\Prj\Delta$, the Leray orthogonal projector $\Prj$ on divergence free space $\sH$, and the non-linear operator $\widetilde{B}$ which will be described later.

This unknown $\phi$ takes the values $+1$ outside the membrane and $-1$ inside, with a thin transition width characterized by a small positive parameter~$\varepsilon$. The surface of the membrane corresponds to the points where $\phi=0$, which is actually a very complex area described by the level-set approach, but not explicitly considered in the phase field approach, or in various numerical approaches. The term $\dfrac{\delta E(\phi)}{\delta \phi}$ is sometimes called the {\em chemical potential}. It is multiplied by the constant $\gamma$ which is a positive real number controlling the strength of the chemical potential. Moreover this term can be modeled using various description, depending of the physical consideration about the vesicle. 

It is assumed that the energy associated with the deformation of the vesicle membrane comes mainly from the bending energy. Actually this energy is not directly well adapted to {\em a priori} estimate of quantities related to $\phi$ (like its norm in Sobolev spaces) since the vesicle tends to minimize the quantity
\[
f_{\varepsilon}(\phi)=-\varepsilon\Delta\phi-\frac{\phi}\varepsilon(1-\phi^2),
\]
by minimization of the penalized bending energy given by
\[
 \mathcal{E}_\varepsilon(\phi)=\frac{k}{2\varepsilon}\int_\qspace\lp \varepsilon\Delta \phi+\dfrac\phi\varepsilon\lp1-\phi^ 2\rp\rp^2\ddx = \frac{k}{2\varepsilon}\int_{\qspace} f_{\varepsilon}(\phi)^{2} \ddx,
\]
with the physical parameter $k$ of low relevance here. This bending energy is clearly not a norm or the sum of two competitive behaviors like in classical Allen-Cahn or Cahn-Hilliard equations. Although it implies only a second-order differential operator, this energy is more close to a fourth-order differential linearity as in the Cahn-Hilliard model. Thus the difficulty of the model comes from this form of energy.
Moreover, knowing that the volume and the surface area of the vesicle are basically preserved, we penalize the bending energy $\mathcal{E}_\varepsilon$  by adding extra terms to form the total energy $E$ : 
\[
 E(\phi)=  \mathcal{E}_\varepsilon(\phi)+\frac12 M_1(\mathcal{A}(\phi)-a)^2+\frac12 M_2(\mathcal{B}_{\varepsilon}(\phi)- b)^2,
\]
where
\[
 \mathcal{A}(\phi)=\int_\qspace\phi\  \ddx,
\]
\[
  \mathcal{B}_{\varepsilon}(\phi)=\int_\qspace\left(\dfrac{\varepsilon}{2}|\nabla\phi|^2+\dfrac{1}{4\varepsilon}(\phi^2-1)^2\right)\ddx,
\]
with $M_1$, $M_2$ which are (large) constants used to enforce that the volume and the surface area of the vesicle remain the same. The constants $a$ and $b$ are physical parameters related to the actual volume and surface area of the vesicle
(see \cite{DuLiuWang:04} for details).

Finally -and this is the novelty in the modeling- we assume that there exist two stochastic perturbations $\xi_{w}$ and $\xi_{\phi}$ which are the derivative of space-time noises $W$ and $Z$, thus formally $\xi_{w} = dW$ and $\xi_{\phi}=dZ$. These perturbations are added linearly to both equations
of the system of interaction via covariance operators $\Ca$ and $\Cb$.
\begin{Hyp} \label{hyp.trace}
We assume that
\[ 
  \Tr[\Ca^*\Ca]<\infty,\qquad \Tr[\Cb^*\Delta^2 \Cb]<\infty.
\]

\end{Hyp} 

From a physical perspective, the stochastic perturbation can be seen as an unknown internal microscopic thermal agitation, or a random source. The technical assumptions of the noises permit to use the It\^o-formula, which is the key to obtain {\em a priori} estimates depending of the trace of the operators. We obtain the abstract formulation of our studied system
\begin{equation}\label{eq.aNSCHabstract}
 \begin{cases}
 d(w+\alpha^2 A w)=\left(-\nu A(w+\alpha^2 A w)- \widetilde{B}(w,w+\alpha^2 A w) +\Prj\left(\dfrac{\delta E(\phi)}{\delta \phi}\nabla \phi\right) \right)dt + \Ca dW_{t}\\
 d\phi=\left(-w\cdot \nabla \phi  -\gamma \dfrac{\delta E(\phi)}{\delta \phi}\right)dt+\Cb dZ_{t}.
 \end{cases}
\end{equation}

Moreover this system is endowed by boundary and initial conditions
\begin{equation*} 
\begin{cases}
   w=0, \quad A w =0,& \text{on } [0,T]\times\partial \qspace,\\
   \phi=-1,\quad \Delta \phi =0, &  \text{on } [0,T]\times \partial \qspace,\\
   u(0,x) = u_{0}(x) & \text{on } \qspace,\\
   \phi(0,x) = \phi_{0}(x) & \text{on } \qspace,\\
\end{cases} 
\end{equation*}
with initial data $u_{0}$ and $\phi_{0}$.
\begin{Rem}
The apparently extra boundary condition $A w=0$ makes sense,
since in $\alpha$-Navier-Stokes model, we study a couple of unknowns $w$ and $u=w +\alpha^{2}A w$ (the pressure $q$ disappears with Leray's projection) which have both homogeneous Dirichlet boundary condition $u=w=0$ on $\partial \qspace$. Thus $\alpha^{2}A w =0$ on $\partial \qspace$.
\end{Rem}

This system is composed of two stochastic partial differential equations which are coupled by an energy. So this is clear that the results obtained in this paper about existence and uniqueness of solution can be extended to more general forms of coupling energy, as soon as it permits a control of some norm of $\phi$ in Hilbert space with space regularity. Actually the studied form of energy is a mixing between fourth-order Cahn-Hilliard equation and second-order Allen-Cahn equation.
These types of stochastic equations with additive noise have been studied in many works. For the Cahn-Hilliard equation there are results about existence and uniqueness in \cite{CardonWeber,DaPratoDebussche,ElezovicMikelic} with polynomial nonlinearity, and in \cite{DebusscheGoudenege,Goudenege,GoudenegeManca:16} for singular nonlinearity and space-time white noises, or degenerate noises. We can also cite a result of existence for a stochastic partial differential equation with a mixing between Cahn-Hiliard and Allen-Cahn equation with multiplicative noise. It has been obtained in \cite{AntonopoulouKaraliMillet} with estimations on the Green functions in the spirit of \cite{Walsh,Gyongy}. Concerning the stochastic Navier-Stokes equation, we can cite the important work present in \cite{HairerMattingly:06,HairerMattingly:11, Odasso}. Using approximated equations in finite dimensional space, we have exhibited a priori estimates and compactness of a sequence of solution of these approximated equations. It permits to prove existence (and uniqueness) of weak (martingale) solution obtained by convergence in weak topology of classical spaces $\sL^{2}(\qspace)$. Precisely we have proved the following:
\begin{Th}\label{thm.intro}\ \newline
\indent Let $T>0$ and $(w_0,\phi_0)\in D(A)\times \sL^2(\qspace)$ with $\phi_{0}=-1$ on $\partial \qspace$.

Assume that the linear operators $(\Ca,\Cb)$ satisfy Hypothesis \ref{hyp.trace}.

Then there exists a unique weak solution $((w,\phi), (\Omega, \mathcal{F}, \mathbb{P}, (\mathcal{F}_{t})_{t\in[0,T]}),(W,Z))    $ 

of problem \eqref{eq.aNSCHabstract}.
Moreover, for any $k\in \N^*$ there exists a constant $c=c(k,T,w_0, \phi_0)>0$ such that
\begin{equation} \label{eq.aNSCHestimate}
 \begin{split}
  &\E\left[\sup_{0\leq t\leq T}\left(  \|w(t)\|_{\sH}  +|\phi(t)|_{2}\right)^k  \right] \leq c,\\
  &\E\left[\int_0^{T} \left( |w|_2^2 +\alpha^2 |\nabla w|_2^2+E(\phi)\right)^{k-1} \left(\nu(|\nabla w|_2^2 +\alpha^2|A  w|_2^2 )
     +\gamma\left|\dfrac{\delta E(\phi)}{\delta \phi}\right|_2^2\right) \dd s\right]\leq c.
   \end{split}
\end{equation}
Finally, $\phi,w$ are continuous in mean square, that is for any $t_0\geq0$ we have
\[
  \lim_{t\to t_0}\E\left[|w(t)-w(t_0)|_V^2  \right]=0,\qquad \lim_{t\to t_0}\E\left[|\phi(t)-\phi(t_0)|_{\sL^2(\qspace)}^2  \right]=0.
\]
\end{Th}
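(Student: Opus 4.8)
The plan is to construct the solution by a Galerkin finite-dimensional approximation, to derive uniform \emph{a priori} estimates through It\^o's formula applied to a suitably chosen energy, and then to pass to the limit by a tightness and Skorokhod argument, closing with a pathwise-uniqueness estimate. Concretely, I would fix orthonormal bases adapted to the two components, namely the eigenfunctions of the Stokes operator $A$ for the velocity and the eigenfunctions of $-\Delta$ (with the prescribed boundary conditions) for the phase field, and project \eqref{eq.aNSCHabstract} onto the span of the first $n$ modes of each. Since the projected drift is locally Lipschitz and the projected diffusion is constant, the resulting finite-dimensional It\^o system admits a unique local solution $(w^n,\phi^n)$ up to a blow-up time, and global existence on $[0,T]$ follows \emph{a posteriori} once the energy estimate is available.

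The heart of the proof is the \emph{a priori} estimate, for which I would apply It\^o's formula to
\[
  \mathcal{H}(w,\phi)=\tfrac12\lp|w|_2^2+\alpha^2|\nabla w|_2^2\rp+E(\phi),
\]
the quantity appearing in \eqref{eq.aNSCHestimate}. The decisive structural point is the cancellation of the coupling terms: testing the momentum equation against $w$ produces $\la\Prj(\tfrac{\delta E(\phi)}{\delta \phi}\nabla\phi),w\ra=\la\tfrac{\delta E(\phi)}{\delta \phi}\nabla\phi,w\ra$, while differentiating $E(\phi)$ along the phase-field equation produces $\la\tfrac{\delta E(\phi)}{\delta \phi},-w\cdot\nabla\phi\ra$, and these are exactly opposite; at the same time the trilinear term $\la\widetilde{B}(w,w+\alpha^2 Aw),w\ra$ vanishes by the antisymmetry inherited from the Foias--Holm--Titi structure \cite{FoiasHolmTiti:01,FoiasHolmTiti:02}. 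What remains is the dissipation $-\nu(|\nabla w|_2^2+\alpha^2|Aw|_2^2)-\gamma|\tfrac{\delta E(\phi)}{\delta \phi}|_2^2$ together with the two It\^o correction terms. The correction from $\Ca\,dW$ is bounded by $\Tr[\Ca^*\Ca]$ since $(I+\alpha^2A)^{-1}$ is a bounded operator; the correction from $\Cb\,dZ$ is the delicate one, because the second variational derivative of $E$ contains a bi-Laplacian contribution, and it is exactly $\Tr[\Cb^*\Delta^2\Cb]<\infty$ in Hypothesis \ref{hyp.trace} that makes this term finite. Having closed the energy balance, I would raise it to the $(k-1)$-th power (or apply It\^o to $\mathcal{H}^k$ directly), control the martingale parts with the Burkholder--Davis--Gundy inequality, and conclude by Gronwall to obtain both bounds of \eqref{eq.aNSCHestimate} uniformly in $n$.

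With these uniform bounds I would prove tightness of the laws of $(w^n,\phi^n)$ in a path space combining the weak topology of $\sH$ (for the supremum-in-time control) with $\sL^2(0,T;\sV)$ (from the dissipation), adding a fractional-in-time estimate on the increments so that an Aubin--Lions compactness argument applies. Prokhorov and Skorokhod then give a new probability space on which a subsequence converges almost surely, and a martingale representation argument recovers the driving noises and identifies the limit as a weak solution of \eqref{eq.aNSCHabstract}. The main obstacle is the passage to the limit in the nonlinear bending terms $\tfrac{\delta E(\phi)}{\delta \phi}\nabla\phi$ and $\tfrac{\delta E(\phi)}{\delta \phi}$, which are high-order polynomial expressions in $\phi$ and its derivatives: weak convergence is not enough, and one must upgrade to strong convergence in the relevant Sobolev topology, using the uniform bound on $E(\phi)$ to control the higher-order norms and the moment bounds to pass to the limit in expectation.

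Finally, uniqueness would follow from a pathwise estimate: for two solutions driven by the same noise I would write the equation satisfied by the difference, apply the same energy identity, absorb the differences of the coupling and nonlinear terms by means of the \emph{a priori} regularity, and close by Gronwall; combined with the existence of a martingale solution this yields a unique weak solution through the Yamada--Watanabe principle. The mean-square continuity statements then follow from the continuity of the limit paths in $\sV$ and $\sL^2(\qspace)$ respectively, together with the uniform moment bounds of \eqref{eq.aNSCHestimate}, which supply the uniform integrability needed to interchange limit and expectation.
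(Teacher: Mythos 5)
Your proposal is correct and follows essentially the same route as the paper: Galerkin projection onto Stokes/Laplacian eigenmodes, It\^o's formula applied to $|w|_2^2+\alpha^2|\nabla w|_2^2+E(\phi)$ with the exact cancellation of the coupling and trilinear terms, control of the It\^o correction via $\Tr[\Cb^*\Delta^2\Cb]$, powers of the energy plus Burkholder--Davis--Gundy and Gronwall for the moment bounds, tightness in fractional-in-time Sobolev spaces with Prokhorov--Skorokhod, and a pathwise Gronwall argument for uniqueness. The only place where the paper's proof contains substance your outline compresses is the identification of the limit in the bending terms, which requires the additional uniform $\sL^2([0,T];\sH^4)$ bound on $\phi_n$ obtained from $|\Delta^2\phi|_2\leq |\tfrac{\delta E}{\delta\phi}(\phi)|_2+c(1+E(\phi)^2)$ (not from $E(\phi)$ alone, which only controls $\sH^2$), and, for uniqueness, the splitting $\tfrac{\delta E}{\delta\phi}=M(\phi)+N(\phi)$ with a Lipschitz-type estimate on $N$.
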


In section 1, we will describe notations about spaces, classical inequalities and nonlinear estimates about the bending energy which are of crucial importance for the proof of the main result. Moreover we describe the definition of a solution of equation \eqref{eq.aNSCHabstract}. In section 2, we derive a priori estimate and we prove technical lemmas which will be used in the proof of the main theorem. Finally in section 3, under the hypotheses of \ref{thm.intro}, we prove existence and uniqueness of solution which satisfies \ref{thm.intro}. This result is a corollary of a more general result obtained in Section 3 about existence and uniqueness of solution with an approximation procedure in finite dimensional spaces. In particular we prove continuity of solution with respect to time with values in Sobolev spaces, and $\sL^{p}$ integrability of solution with respect to time with values in Sobolev spaces ($\sH^{1}$ for fluid unknown $w$ and $\sH^{4}$ for parameter order $\phi$).

\section{Spaces, inequalities and nonlinear estimates}

The $\alpha$-Navier-Stokes equation \eqref{eq.aNS} can be formulated in the equivalent form given in \eqref{eq.aNSPF}. We need to explain this equivalence, since this is the core of the variational formulation. First we introduce the following spaces:
\begin{itemize}
 \item $\mathcal{C}_{0}^{\infty}(\qspace)$ is the space of infinitely differentiable functions with compact support;
 \item $\sL^{2}(\qspace)$, $\sL^{p}(\qspace)$, $\sH^k(\qspace)$, $\sH^k_0(\qspace)$, $W^{p,k}(\qspace)$ denotes the usual Sobolev spaces for integrability order $p\in\N$ and derivative order $k\in\N$; when the functions are vector-valued in dimension $N=1,2,3$, we write $(\sL^2(\qspace))^N$;
 \item $(\cdot,\cdot)$ denotes the inner product of the Hilbert space $(\sL^2(\qspace))^N$, with $N=1,2,3$;
 \item $|\cdot|_{p}$ denotes the norm in the space $(\sL^{p}(\qspace))^{N}$, with $p\in\N$ and $N=1,2,3$;
 \item $\|\cdot\|_{\sL}$ denotes the norm in a generic space $\sL$;
 \item $\la\cdot,\cdot\ra_{\sL',\sL}$ denotes the duality between a generic space $\sL$ and its dual space $\sL'$;
 \item $X$ is the space $(\sH^{1}_{0}(\qspace))^{N} \cap (\sH^{2}(\qspace))^{N}$;
 \item $\sH$ is the closure in $(\sL^2(\qspace))^{N}$ of $\{u\in X:\Div(u)=0\}$;
 \item $\sV$ is the closure in $(\sH^1(\qspace))^{N}$ of $\{u\in X:\Div(u)=0\}$;
\end{itemize}

\subsection{The Stokes operator $A$}

We denote by $\Prj:(\sL^2(\qspace))^N\to \sH$ the Leray orthogonal projector.
The Stokes operator is then defined by
\[
   A:=-\Prj\Delta : D(A)\to \sH,
\]
with domain $D(A)=X \cap V\subset \sH$.
The operator $A$ is self adjoint and positive. 
Its inverse, $A^{-1}:\sH\to \sH$, is a compact self adjoint operator, thus $\sH$ admits an orthonormal basis $\{e_{j}\}_{j\in\N^{*}}$ formed by the eigenfunctions of $A$, i.e. $A e_{j} = \lambda_{j}e_{j}$, with $0 < \lambda_{1}\leq \lambda_{2}\leq \cdots \leq \lambda_{j} \rightarrow+\infty$. For $\rho\in\R$, the Sobolev spaces $D(A^\rho)$ are the closure of $C_0^\infty(\qspace)$ with respect to the norm
\[
\|x\|_{D(A^\rho)}=\left(\sum_{k}(1+\lambda_k^{2\rho})\la x,e_k\ra^2  \right)^{\frac12}.
\]
As well known (see, for instance, \cite{FoiasHolmTiti:02}) the operator $A$ can be continuously extended to $V=D(A^{\frac12})$ with values in $V'=D(A^{-\frac12})$ such that for all $u$, $v \in \sV$
\[
\la Au,v\ra_{V',V}=(A^{1/2}u,A^{1/2}v)=\int_\qspace (\nabla u\cdot \nabla v)\ dx,
\]
Similarly $A^2$ can be continuously extended to $D(A)$ with values in $D(A)'$ (the dual space of the Hilbert space $D(A)$) such that
for all $u$, $v \in D(A)$
\[
\la A^2u,v\ra_{D(A)',D(A)}=(Au,Av).
\]
One can show that there is a constant $c>0$ such that for all $w \in D(A)$
\[
c^{-1} |Aw|_{2} \leq \|w \|_{\sH^{2}} \leq c\  |Aw|_{2}.
\]

This operator $A$ could also be used to define a stochastic convolution thanks to the $strongly$ continuous semigroup $(e^{tA})_{t\geq 0}$ by the formula
\[
W_A(t)=\int_0^t e^{(t-s)A}dW(s),
\]
for cylindrical Wiener processes, which could be used for instance to define mild solutions.
This is not the choice made here, since we have enough regularity to define solution with variational estimation.


\subsection{The bilinear form $\widetilde{B}$}


The specific form of $\alpha$-Navier-Stokes equation \eqref{eq.aNS} has been studied in \cite{Caglar} for bounded domains, or in \cite{FoiasHolmTiti:01} as the Kelvin-filtered Navier-Stokes equation. This equation is also known as the viscous version of the Camassa-Holm equation. It has been studied in 
\cite{ChenFoiasHolmOlsonTitiWynne} and for periodic domain in \cite{FoiasHolmTiti:02}.
But the global well-posedness for the Lagrangian averaged Navier-Stokes (LANS-$\alpha$) equations
on bounded domains have been studied in \cite{MarsdenShkoller} where the authors describe the equivalence between different formulations.
Precisely they show that the $\alpha$-Navier-Stokes equation \eqref{eq.aNS} is equivalent to LANS-$\alpha$ equations under the condition $Aw=0$ on $\partial \qspace$. We do not present all the details, but the central idea is to define a bilinear operator associated to the non-linear part of equation \eqref{eq.aNS} in the spirit of the usual bilinear operator $B(w,u)=\Prj\left[(w \cdot \nabla ) u\right]$ of Navier-Stokes equations.
It is well defined for all $w$, $u\in (\sH^{1}_{0}(\qspace))^{N}$, and such that for all $w$, $u$ and $v\in V\subset (\sH^{1}_{0}(\qspace))^{N}$
\[
\left(B(w,u),v\right) = -\left(B(w,v),u\right).
\]

Thus, applying $\Prj$ to the equation \eqref{eq.aNS} and using the identity
\[
(w\cdot \nabla)u+ (\nabla w)^{T} u=-w\times (\nabla\times u)+\nabla(u\cdot w),
\]
we can see that the nonlinear term of equation \eqref{eq.aNS} could be replaced by 
the bilinear operator $\widetilde{B}$ defined for all $w$, $u\in (\sH^{1}_{0}(\qspace))^{N}$ by 
\[
\widetilde{B}(w,u) =-\Prj[w\times (\nabla\times u)]
\]
since $\nabla(u\cdot w)$ is in the orthogonal of $V$. This operator appears clearly in the Camassa-Holm formulation.

The next results will be  crucial for many proofs:
\begin{Prop} \label{prop.1.1}\ \newline
\noindent 
(i) The operator $\widetilde{B}$ can be extended continuously to $\sV\times \sV$ with values in $\sV'$; for all $u,v,w\in \sV$ it satisfies
\begin{eqnarray*}
 &\left|\la \widetilde{B}(u,v),w\ra_{V',V}\right|\leq c\|u\|_{\sH}^{1/2}\|u\|_{\sV}^{1/2}\|v\|_{\sV}\|w\|_{\sV},\\
 &\left|\la \widetilde{B}(u,v),w\ra_{V',V}\right|\leq c\|u\|_{\sV}\|v\|_{\sV}\|w\|_{\sH}^{1/2}\|w\|_{\sV}^{1/2},\\
 & \la \widetilde{B}(u,v),w\ra_{V', V}=-\la \widetilde{B}(w,v),u\ra_{V',V}, \quad \text{ and } \quad  \la \widetilde{B}(u,v),u\ra_{V', V}=0.\\
\end{eqnarray*}
\noindent
(ii)
Its restriction to $D(A)$ satisfies for all $u\in V$, $v\in H$, $w\in D(A)$ it holds
\[
  \left|\la \widetilde{B}(u,v),w\ra_{D(A)',D(A)}\right|\leq  c  \|u\|_{\sV} \|v\|_{\sH} \|w\|_{D(A)}
\]
\end{Prop}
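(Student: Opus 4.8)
The plan is to work from the explicit scalar representation
$\la \widetilde{B}(u,v),w\ra = -\int_\qspace [u\times(\nabla\times v)]\cdot w\,\ddx$,
valid for smooth divergence-free fields, and to reduce every assertion to Hölder's inequality combined with Gagliardo--Nirenberg interpolation and the Sobolev embeddings $\sH^1\hookrightarrow \sL^4$ (for $N=2$) and $\sH^1\hookrightarrow \sL^6$ (for $N=3$). Since the test field $w$ is divergence free we have $\Prj w=w$, and because $\Prj$ is self-adjoint the pairing indeed collapses to this triple-product integral. I would first dispatch the algebraic identities, which require no analysis: the scalar triple product $(a\times b)\cdot c$ is antisymmetric under exchange of $a$ and $c$, so $[u\times(\nabla\times v)]\cdot w=-[w\times(\nabla\times v)]\cdot u$ pointwise; integrating gives $\la \widetilde{B}(u,v),w\ra=-\la \widetilde{B}(w,v),u\ra$, and the choice $w=u$ forces $\la \widetilde{B}(u,v),u\ra=0$.

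For the two inequalities in (i) I would apply Hölder with the middle factor $\nabla\times v$ measured in $\sL^2$ (controlled by $\|v\|_\sV$, using $|\nabla\times v|_2\le c\,|\nabla v|_2\le c\|v\|_\sV$) and split the remaining exponents between $u$ and $w$. The combination $\|u\|_\sH^{1/2}\|u\|_\sV^{1/2}$ is exactly the interpolation bound for $|u|_4$ when $N=2$ and for $|u|_3$ when $N=3$; pairing it with $|w|_4$ (exponents $4,2,4$ for $N=2$) or $|w|_6$ (exponents $3,2,6$ for $N=3$) and controlling the last factor by the Sobolev embedding $|w|_4\le c\|w\|_\sV$ or $|w|_6\le c\|w\|_\sV$ yields the first inequality in both dimensions. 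The second inequality is the same computation with the roles of $u$ and $w$ interchanged, which is legitimate since both enter linearly and symmetrically in the triple product. Bounding $\|u\|_\sH^{1/2}\|u\|_\sV^{1/2}\le c\|u\|_\sV$ then gives $\|\widetilde{B}(u,v)\|_{\sV'}\le c\|u\|_\sV\|v\|_\sV$, so $\widetilde{B}$ extends continuously to $\sV\times\sV\to\sV'$ by density of the smooth divergence-free fields.

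For (ii), where $v$ lies only in $\sH$ and carries no derivative, the decisive step is to integrate by parts so as to move the curl off $v$. Rewriting the triple product cyclically as $(\nabla\times v)\cdot(w\times u)$ and using $\nabla\cdot(v\times(w\times u))=(\nabla\times v)\cdot(w\times u)-v\cdot\nabla\times(w\times u)$, the boundary term $\int_{\partial\qspace}(v\times(w\times u))\cdot n$ vanishes because $u,w\in V\subset(\sH^1_0(\qspace))^N$ are zero on $\partial\qspace$. The divergence-free identity $\nabla\times(w\times u)=(u\cdot\nabla)w-(w\cdot\nabla)u$ then gives $\la \widetilde{B}(u,v),w\ra=\int_\qspace v\cdot[(u\cdot\nabla)w-(w\cdot\nabla)u]\,\ddx$. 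I would estimate $\int_\qspace v\cdot(u\cdot\nabla)w\,\ddx$ by $|v|_2\,|u|_6\,|\nabla w|_3\le c\|v\|_\sH\|u\|_\sV\|w\|_{D(A)}$ (using $\sH^1\hookrightarrow\sL^6$, the embedding $\sH^2\hookrightarrow W^{1,3}$ in dimension $N\le 3$, and the equivalence $\|w\|_{D(A)}\sim\|w\|_{\sH^2}$ recalled in the excerpt), and the term $\int_\qspace v\cdot(w\cdot\nabla)u\,\ddx$ by $|v|_2\,|w|_\infty\,|\nabla u|_2$ via $\sH^2\hookrightarrow\sL^\infty$.

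The step I expect to be the main obstacle is part (ii): one must justify the integration by parts and verify that the boundary contributions genuinely drop out, which rests on the homogeneous Dirichlet conditions for $u$ and $w$, and one must exploit the divergence-free constraints to eliminate the $w(\nabla\cdot u)$ and $u(\nabla\cdot w)$ terms in the curl identity. The inequalities of (i), by contrast, are routine once the correct dimension-dependent Hölder triple is chosen; the only real subtlety there is that the interpolation exponents $1/2,1/2$ force $\sL^4$ in two dimensions but $\sL^3$ in three.
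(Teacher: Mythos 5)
Your proposal is correct, but it does genuinely more than the paper, which gives no argument of its own: part \emph{(i)} is dispatched by a citation to Foias--Holm--Titi, and part \emph{(ii)} by quoting from Entringer--Boldrini the intermediate bound $\left|\la \widetilde{B}(u,v),w\ra\right|\leq c\bigl(\|u\|_{\sH}^{1/2}\|u\|_{\sV}^{1/2}\|v\|_{\sH}\|Aw\|_{\sH}+\|u\|_{\sV}\|v\|_{\sH}\|w\|_{\sV}^{1/2}\|Aw\|_{\sH}^{1/2}\bigr)$ and observing that each summand is dominated by $\|u\|_{\sV}\|v\|_{\sH}\|w\|_{D(A)}$. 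Your derivation is the standard one underlying those references, and your treatment of \emph{(ii)} lands on exactly the same structural decomposition: after the cyclic rewriting, the integration by parts (legitimate since $u,w$ have vanishing trace, so the boundary flux of $v\times(w\times u)$ drops), and the curl identity with the divergence-free cancellations, you reach the two terms $\int_\qspace v\cdot(u\cdot\nabla)w$ and $\int_\qspace v\cdot(w\cdot\nabla)u$, which are precisely the two summands of the quoted estimate. The only difference is that you bound the second term via $|w|_\infty\leq c\|w\|_{\sH^2}$ where the cited bound uses Agmon's interpolation $|w|_\infty\leq c\|w\|_{\sV}^{1/2}\|Aw\|_{\sH}^{1/2}$; your cruder choice is perfectly sufficient for the conclusion as stated. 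Two small points worth making explicit if you write this up: in dimension $N=2$ the curl and cross product must be read in the scalar/skew sense for the triple-product algebra to go through verbatim, and for $v\in\sH$ only, $\widetilde{B}(u,v)$ is by definition the continuous extension from smooth fields, so the whole computation should be performed on a dense smooth class and then closed using the final estimate --- which is how you have implicitly organized it.
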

\begin{proof}
  The proof of {\em (i)} is classical and be found, for instance, on \cite{FoiasHolmTiti:02}.
  The statement  {\em (ii)} follows easily by the estimate 
  \[
  \left|\la \widetilde{B}(u,v),w\ra_{D(A)',D(A)}\right|\leq c\left(   \|u\|^{1/2}_{\sH} \|u\|^{1/2}_{\sV}\|v\|_{\sH} \|Aw\|_{\sH} + \|u\|_{\sV} \|v\|_{\sH}\|w\|_{\sV}^{1/2}\|Aw\|_{\sH}^{1/2}   \right)
\]
which can be found, for instance, in \cite{EntringerBoldrini}.
\end{proof}

\subsection{Definition of solutions} 

We are now able to define the concept of solution of equation \eqref{eq.aNSPF} or more precisely the solution of its abstract form \eqref{eq.aNSCHabstract}.

\begin{Def} \label{def.sol} Let $T>0$ and $(w_0,\phi_0)\in D(A)\times \sL^2(\qspace)$ with $\phi_{0}=-1$ on $\partial \qspace$.
Assume that the linear operators $(\Ca,\Cb)$ satisfy Hypothesis \ref{hyp.trace}. 
We say that  $((w,\phi), (\Omega, \mathcal{F}, \mathbb{P}, (\mathcal{F}_{t})_{t\in[0,T]}),(W,Z))$ is a weak solution of \eqref{eq.aNSCHabstract} if 
\begin{itemize}
\item  $(\Omega, \mathcal{F}, \mathbb{P}, (\mathcal{F}_{t})_{t\in[0,T]})$ is a complete filtered probability space.
\item $w$, $\phi$ are adapted to the filtation $(\mathcal{F}_{t})_{t\in[0,T]}$.
\end{itemize}
Moreover,  $\Prb$-a.s.,
\begin{itemize}
\item $w\in \sL^{2}([0,T];D(A))$; 
\item $w+\alpha^2Aw\in \mathcal{C}([0,T];D(A)'))\cap \sL^2([0,T];\sH)$;
\item $\tilde B(w,w+\alpha^2Aw)\in \sL^2([0,T];D(A)')$;
\item $\phi\in \sL^{2}([0,T];\sH^{2}(\qspace))\cap \mathcal{C}([0,T];\sL^2(\qspace))$ such that $\phi+1=\Delta \phi =0$ on $\partial \qspace$; 
\item $\frac{\delta E (\phi)}{\delta \phi} \in \sL^{2}([0,T];\sL^{2}(\qspace))$  (this term will be defined in Section \ref{section.nonlinear});
\item $w \cdot \nabla \phi \in \sL^{2}([0,T];\sL^{2}(\qspace))$;
\item $\frac{\delta E (\phi)}{\delta \phi}\nabla \phi \in  \sL^{2}([0,T];D(A)')$;
\item For all $\xi \in D(A)$,  for all $t\in [0,T]$ we have
\begin{equation}   \label{eq.def.sol}
 \begin{cases}   
 \la w(t)+\alpha^2 A w(t),\xi \ra= \la w_{0}+\alpha^2 A w_{0},\xi \ra -\nu  \int_{0}^{t}\la w+\alpha^2 A w, A\xi\ra \dd s\\
 \qquad  - \int_{0}^{t} \la \widetilde{B}(w,w+\alpha^2 A w),\xi\ra \dd s 
  +\int _{0}^{t}\la\dfrac{\delta E(\phi)}{\delta \phi}\nabla \phi,\xi \ra \dd s  +  \la \Ca^{*} \xi,  W_{t}\ra\\
 \phi(t)= \phi_{0} - \int_{0}^{t} \left(w\cdot \nabla \phi  +\gamma \dfrac{\delta E(\phi)}{\delta \phi}\right)\dd s + \Cb Z_{t}.
 \end{cases}
\end{equation}
\end{itemize}
\end{Def}

\subsection{Nonlinear estimates} \label{section.nonlinear}

From now and to the end of this article we will skip the parameters $\varepsilon$ and $k$ (set to the value $1$) because it does not bring very useful information and the reading will be clearly simplified. For this reason we will be very cautious about cancellation during subtraction of terms.

The variational derivative of $E$ with respect to the variable $\phi$ at point $\phi$ in the direction $\psi$ is defined for any $\phi+1,\psi\in \mathcal{C}_0^\infty(\qspace)$. 
by 
\begin{eqnarray*}
\la \dfrac{\delta E}{\delta\phi}(\phi),\psi\ra &=&\lim_{h\to0}\frac{E(\phi+h\psi)-E(\phi)}{h}=\int_\qspace \dfrac{\delta E}{\delta\phi}(\phi)\psi \ \ddx\\
&=& \int_\qspace f(\phi) \left(f'(\phi)\psi\right) \ \ddx+
  M_1(\mathcal{A}(\phi)-a)\mathcal{A}(\psi)+M_2(\mathcal{B}(\phi)-b)\int_\qspace f(\phi)\psi \ \ddx
\end{eqnarray*}
Here we have set 
\[
  f'(\phi)\psi=- \Delta\psi +(3\phi^2-1)\psi.
\]
In this case the variational derivative of $E$ can be identified with 
\begin{equation}\label{eq.varE2}
 \dfrac{\delta E}{\delta\phi}(\phi)
   =  \Delta^2\phi-\Delta(\phi^3-\phi) +(3\phi^2-1)f(\phi) + M_1(\mathcal{A}(\phi)-a)+M_2(\mathcal{B}(\phi)-b)f(\phi).
\end{equation}

\begin{Prop}
There exists $c>0$ such that for any $\phi+1\in C_0^\infty(\qspace)$ it holds 
\begin{equation} \label{eq.ineq.E2}
    |\Delta\phi|_2^2+|\nabla \phi|_2^4+|\phi\nabla\phi|_2^2+|\phi|_4^8+|\phi|_6^6\leq c(1+E(\phi)).
\end{equation}
and
\begin{equation} \label{eq.ineq.E2bis}
   E(\phi)\leq c(1+\|\phi\|_{\sH^2}^8) 
\end{equation}
\end{Prop}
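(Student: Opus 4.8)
The plan is to prove the two inequalities separately: the first (and essential) one by expanding the bending energy and performing one integration by parts, and the second by direct Sobolev estimates.

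For \eqref{eq.ineq.E2}, I would first expand the square in $\mathcal{E}(\phi)=\frac12\int_\qspace f(\phi)^2\,\ddx$, with $f(\phi)=-\Delta\phi-\phi(1-\phi^2)$, to get
\[
2\mathcal{E}(\phi)=|\Delta\phi|_2^2+2\int_\qspace\Delta\phi\,\phi(1-\phi^2)\,\ddx+\int_\qspace\phi^2(1-\phi^2)^2\,\ddx.
\]
The decisive step is to integrate the cross term by parts. Since $\phi+1\in\mathcal{C}_0^\infty(\qspace)$, the factor $\phi(1-\phi^2)$ and $\nabla\phi$ vanish near $\partial\qspace$, so all boundary terms disappear; writing $g(\phi)=\phi-\phi^3$ with $g'(\phi)=1-3\phi^2$ yields $\int_\qspace\Delta\phi\,g(\phi)\,\ddx=-|\nabla\phi|_2^2+3|\phi\nabla\phi|_2^2$. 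Expanding also $\phi^2(1-\phi^2)^2=\phi^2-2\phi^4+\phi^6$ gives the identity
\[
|\Delta\phi|_2^2+6|\phi\nabla\phi|_2^2+|\phi|_6^6=2\mathcal{E}(\phi)+2|\nabla\phi|_2^2+2|\phi|_4^4-|\phi|_2^2.
\]
This is the heart of the matter: the sign-indefinite cross term produces the favorable quantity $6|\phi\nabla\phi|_2^2$ but leaves the two positive contributions $2|\nabla\phi|_2^2+2|\phi|_4^4$ that must still be controlled by $E(\phi)$.

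To control these I would use the surface penalization. As the penalization terms are nonnegative, $\frac12 M_2(\mathcal{B}(\phi)-b)^2\le E(\phi)$, hence $\mathcal{B}(\phi)\le b+c\sqrt{E(\phi)}$. Recalling $\mathcal{B}(\phi)=\frac12|\nabla\phi|_2^2+\frac14|\phi|_4^4-\frac12|\phi|_2^2+\frac14|\qspace|$, the only obstruction is the negative term $-\frac12|\phi|_2^2$, which I would absorb via Hölder and Young on the bounded domain, $|\phi|_2^2\le\tfrac{\delta}{2}|\phi|_4^4+C_\delta$ with $\delta$ small. This yields $|\nabla\phi|_2^2+|\phi|_4^4\le c(1+\sqrt{E(\phi)})$. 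Inserting this into the identity, together with $\mathcal{E}(\phi)\le E(\phi)$ and $\sqrt{E(\phi)}\le\frac12(1+E(\phi))$, gives $|\Delta\phi|_2^2+|\phi\nabla\phi|_2^2+|\phi|_6^6\le c(1+E(\phi))$; squaring the bound $|\nabla\phi|_2^2+|\phi|_4^4\le c(1+\sqrt{E(\phi)})$ then produces $|\nabla\phi|_2^4+|\phi|_4^8\le c(1+E(\phi))$, completing \eqref{eq.ineq.E2}. For \eqref{eq.ineq.E2bis} the argument is a direct term-by-term upper bound using the Sobolev embedding $\sH^1(\qspace)\hookrightarrow\sL^6(\qspace)$ (valid for $N\le3$): one estimates $|f(\phi)|_2\le|\Delta\phi|_2+|\phi|_2+|\phi|_6^3\le c(1+\|\phi\|_{\sH^2}^3)$, hence $\mathcal{E}(\phi)\le c(1+\|\phi\|_{\sH^2}^6)$; then $|\mathcal{A}(\phi)-a|\le c(1+\|\phi\|_{\sH^2})$ and $\mathcal{B}(\phi)\le c(1+\|\phi\|_{\sH^2}^4)$, so the surface term contributes the dominant $c(1+\|\phi\|_{\sH^2}^8)$, and summing gives the claim.

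The main obstacle lies entirely in the first inequality: because the bending energy is not a norm, its expansion contains a sign-indefinite cross term, and the integration by parts followed by the careful absorption of the leftover positive terms $2|\nabla\phi|_2^2+2|\phi|_4^4$ through the surface penalization is the delicate point. The self-referential nature of $\mathcal{B}(\phi)$, which already contains the very quantities one is trying to bound, is precisely what makes the Young-inequality bookkeeping in the second step indispensable.
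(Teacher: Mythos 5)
Your proposal is correct and follows essentially the same route as the paper: the identity obtained by expanding $|f(\phi)|_2^2$ and integrating the cross term by parts, combined with the bound $|\nabla\phi|_2^2+|\phi|_4^4\leq c(1+\mathcal{B}(\phi))$ extracted from the surface-area penalization (absorbing $|\phi|_2^2$ via Young's inequality), is exactly the paper's argument, and squaring that bound to reach the fourth powers matches the paper's use of $\mathcal{B}(\phi)^2\leq c(1+E(\phi))$. For \eqref{eq.ineq.E2bis} your term-by-term estimate via $\sH^1(\qspace)\hookrightarrow\sL^6(\qspace)$ is the same strategy (and if anything slightly cleaner than the paper's appeal to an $\sL^\infty$ bound).
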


\begin{proof}
 We have
  \begin{equation} \label{eq.termeB}
   4\mathcal{B}(\phi)=2|\nabla\phi|_2^2+|\phi^2-1|_2^2 =2|\nabla\phi|_2^2+|\phi|_4^4-2|\phi|_2^2+|\qspace|.
 \end{equation}
 Then, since $|\phi|_2^2\leq \frac12|\qspace|^2+\frac12|\phi|_4^4$, there exists $c>0$ such that
 \begin{equation}  \label{eq.bound.B}
  |\nabla\phi|_2^2+|\phi|_4^4+|\phi|_2^2\leq c(1+\mathcal{B}(\phi)).
 \end{equation}
Clearly, for some other constant $c>0$ it holds 
\[
   |\nabla\phi|_2^4+|\phi|_4^8+|\phi|_2^4\leq c(1+(\mathcal{B}(\phi))^2)\leq c(1+E(\phi)).
\]
At this point, it remains to bound the quantity $|\Delta \phi|_2^2+|\phi\nabla\phi|_2^2+|\phi|_6^6$.
Using the expression of $f(\phi)$ we get
 \begin{eqnarray}
   2|f(\phi)|_2^2 &=& |-\Delta\phi+\phi(\phi^2-1)|^2_2  \notag  \\
      &=& |\Delta\phi|_2^2-2\langle \Delta \phi,\phi(\phi^2-1)\rangle + |\phi(\phi^2-1)|_2^2  \notag  \\
      &=& |\Delta\phi|_2^2+2\langle \nabla \phi, 3\phi^2\nabla \phi-\nabla\phi\rangle +|\phi|_6^6-|\phi|_2^2  \notag  \\
      &=& |\Delta\phi|_2^2+6|\phi\nabla\phi|_2^2-2|\nabla\phi|_2^2 +|\phi|_6^6-|\phi|_2^2  \label{eq.termef}
 \end{eqnarray}
 Here we use the fact that $\phi(\phi^2-1)\in C_0^\infty(\qspace)$ in order to perform integration by parts.
 Thus, 
 \[
    |\Delta\phi|_2^2+|\phi\nabla\phi|_2^2+|\phi|_6^6\leq 2|f(\phi)|_2^2+2|\nabla\phi|_2^2+|\phi|_2^2
 \]
Using the estimate \eqref{eq.bound.B} and elementary inequalities, there exist constants $c,c'>0$ such that
\[
  |\Delta\phi|_2^2+|\phi\nabla\phi|_2^2+|\phi|_6^6\leq 2|f(\phi)|_2^2+c(1+\mathcal{B}(\phi))\leq  c'(1+E(\phi)).		
\]
Then, \eqref{eq.ineq.E2} follows easily.
Let us show \eqref{eq.ineq.E2bis}.
By \eqref{eq.termef} and the embedding $\sH^1\subset \sL^{6}(\qspace)$ we find that for some $c>0$, independent by $\phi$ it holds 
\[
   2|f(\phi)|_2^2\leq |\Delta\phi|_2^2+6|\phi\nabla\phi|_2^2+|\phi|_6^6
    \leq  |\Delta\phi|_2^2+6|\phi\nabla\phi|_2^2+c\|\phi\|_{\sH^1}^6
\]
Moreover, by Poincaré inequality 
\[
 |\phi\nabla\phi|_2^2\leq |\phi|_\infty^2|\nabla\phi|_2^2\leq  (|\phi+1|_\infty+1)^2|\nabla\phi|_2^2
    \leq   (c|\nabla\phi|_2+1)^2|\nabla\phi|_2^2 \leq 2(c|\nabla\phi|_2^2+1)|\nabla\phi|_2^2.
\]
We deduce that for some $c>0$ 
\[
  2|f(\phi)|_2^2 \leq |\Delta\phi|_2^2+2c|\nabla \phi|_2^4+2c|\nabla\phi|_2^2+c\|\phi\|_{\sH^1}^6 \leq c(1+\|\phi\|_{\sH^2}^6).
\]
The inequality $(\mathcal{A}(\phi)-a)^2\leq c(1+|\phi|_2^2)$, with $c=c(a)$ follows immediately. 
By \eqref{eq.termeB} and the embedding $\sH^1\subset \sL^4(\qspace)$ we get 
\[
   4\mathcal{B}(\phi)\leq 2|\nabla\phi|^2+|\phi|_4^4 +|\qspace|\leq c2|\nabla\phi|^2+\|\phi\|_{\sH^1}^4 +|\qspace|
\]
Then, for some $c>0$ it holds
\[
  (\mathcal{B}(\phi)-b)^2\leq c(1+\|\phi\|_{\sH^1}^8).
\]
Then, by the bounds obtained above, we deduce that \eqref{eq.ineq.E2bis} holds for some $c>0$ independent by $\phi$.
\end{proof}

\begin{Prop} \label{prop.ineq.E3}
 There exists a constant $c>0$ such that for any $\phi+1\in  C_0^\infty(\qspace)$ it holds
  \begin{equation}    \label{eq.ineq.E3}
   |\Delta^2\phi|_2\leq  \left|\frac{\delta E}{\delta \phi}(\phi)\right|_2 +c(1+E(\phi)^2)
 \end{equation}
\end{Prop}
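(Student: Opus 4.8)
The plan is to read \eqref{eq.varE2} as an identity defining $\Delta^2\phi$ in terms of the chemical potential plus lower-order nonlinear remainders, and then to bound every remainder in $\sL^2(\qspace)$ by $c(1+E(\phi)^2)$. Concretely, I would rearrange \eqref{eq.varE2} into
\[
\Delta^2\phi = \frac{\delta E}{\delta\phi}(\phi) + \Delta(\phi^3-\phi) - (3\phi^2-1)f(\phi) - M_1(\mathcal{A}(\phi)-a) - M_2(\mathcal{B}(\phi)-b)f(\phi),
\]
and apply the triangle inequality in $\sL^2(\qspace)$, noting that the two penalty terms are scalar multiples of the constant function $1$, to obtain
\[
|\Delta^2\phi|_2 \leq \left|\frac{\delta E}{\delta\phi}(\phi)\right|_2 + |\Delta(\phi^3-\phi)|_2 + |(3\phi^2-1)f(\phi)|_2 + M_1|\mathcal{A}(\phi)-a|\,|\qspace|^{1/2} + M_2|\mathcal{B}(\phi)-b|\,|f(\phi)|_2 .
\]
It then suffices to control the last four terms by $c(1+E(\phi)^2)$, which recovers the asserted inequality with the sharp leading constant on $\big|\tfrac{\delta E}{\delta\phi}(\phi)\big|_2$.

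Next I would assemble the preparatory bounds. Since (with $\varepsilon=k=1$) the bending part is $\mathcal{E}(\phi)=\tfrac12|f(\phi)|_2^2\geq0$ and both penalty terms are nonnegative, the very definition of $E$ gives, directly,
\[
  |f(\phi)|_2\leq (2E(\phi))^{1/2},\qquad |\mathcal{A}(\phi)-a|\leq (2E(\phi)/M_1)^{1/2},\qquad |\mathcal{B}(\phi)-b|\leq (2E(\phi)/M_2)^{1/2}.
\]
This already dispatches the two scalar terms $M_1|\mathcal{A}(\phi)-a|\,|\qspace|^{1/2}$ and $M_2|\mathcal{B}(\phi)-b|\,|f(\phi)|_2$, the latter being $\leq c(1+E(\phi))$. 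In parallel, from \eqref{eq.ineq.E2} I have $|\Delta\phi|_2^2\leq c(1+E(\phi))$, and since $\phi+1\in C_0^\infty(\qspace)$ vanishes on $\partial\qspace$, elliptic regularity together with Poincaré's inequality yields $\|\phi+1\|_{\sH^2}^2\leq c|\Delta\phi|_2^2$, hence $\|\phi\|_{\sH^2}^2\leq c(1+E(\phi))$.

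Finally I would treat the two genuinely nonlinear terms, where the dimensional restriction $N\leq3$ is used through the embeddings $\sH^2(\qspace)\subset \sL^\infty(\qspace)$ and $\sH^1(\qspace)\subset \sL^4(\qspace)$. Expanding $\Delta(\phi^3-\phi)=6\,\phi|\nabla\phi|^2+3\phi^2\Delta\phi-\Delta\phi$, I would estimate $\bigl|\phi|\nabla\phi|^2\bigr|_2\leq|\phi|_\infty|\nabla\phi|_4^2\leq c\|\phi\|_{\sH^2}^3$ and $\bigl|\phi^2\Delta\phi\bigr|_2\leq|\phi|_\infty^2|\Delta\phi|_2\leq c\|\phi\|_{\sH^2}^3$, while $|(3\phi^2-1)f(\phi)|_2\leq (3|\phi|_\infty^2+1)|f(\phi)|_2\leq c(1+\|\phi\|_{\sH^2}^2)|f(\phi)|_2$. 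Substituting $\|\phi\|_{\sH^2}^2\leq c(1+E(\phi))$ and $|f(\phi)|_2\leq (2E(\phi))^{1/2}$, each contribution is bounded by $c(1+E(\phi))^{3/2}$, which is $\leq c(1+E(\phi)^2)$ by Young's inequality. Summing the four remainder bounds gives \eqref{eq.ineq.E3}. The main obstacle is precisely the control of the products $\phi|\nabla\phi|^2$ and $\phi^2\Delta\phi$: it is here that one must invoke $N\leq3$ for the $\sL^\infty$ and $\sL^4$ embeddings and verify that no power of $E(\phi)$ worse than quadratic appears, so that the target right-hand side $c(1+E(\phi)^2)$ is not exceeded.
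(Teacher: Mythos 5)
Your proposal is correct, and its skeleton is the same as the paper's: rearrange \eqref{eq.varE2}, apply the triangle inequality in $\sL^2(\qspace)$, and bound each remainder by $c(1+E(\phi)^2)$. Where you diverge is in how the individual terms are controlled, and both of your deviations are improvements. First, you exploit the nonnegativity of each summand of $E$ to get the direct bounds $|f(\phi)|_2\leq (2E(\phi))^{1/2}$, $|\mathcal{A}(\phi)-a|\leq (2E(\phi)/M_1)^{1/2}$, $|\mathcal{B}(\phi)-b|\leq (2E(\phi)/M_2)^{1/2}$; the paper instead re-expands $\mathcal{B}(\phi)$ and $f(\phi)$ and funnels everything through the itemized quantities $|\phi|_4^8$, $|\phi|_6^6$, $|\nabla\phi|_2^4$, $|\Delta\phi|_2^2$ of \eqref{eq.ineq.E2}, which is longer and yields the same powers of $E(\phi)$. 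Second, for the products $\phi|\nabla\phi|^2$, $\phi^2\Delta\phi$ and $(3\phi^2-1)f(\phi)$ you pass through $|\phi|_\infty\leq c\|\phi\|_{\sH^2}$ via $\sH^2(\qspace)\subset \sL^\infty(\qspace)$ (valid for $N\leq 3$) together with $\|\phi\|_{\sH^2}^2\leq c(1+E(\phi))$ from elliptic regularity and \eqref{eq.ineq.E2}, whereas the paper invokes $|\phi+1|_\infty\leq \Cp|\nabla\phi|_2$, an inequality that only holds in dimension one; since the paper works in $N=2,3$, your route is the one that actually closes this step. Your final exponent $(1+E(\phi))^{3/2}\leq c(1+E(\phi)^2)$ stays within the target, so \eqref{eq.ineq.E3} follows as claimed.
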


\begin{proof}
By \eqref{eq.varE2} we have
\[ 
   |\Delta^2\phi|_2\leq \left|\frac{\delta E}{\delta \phi}(\phi)\right|_2+I_1+I_2+I_3+I_4,
\]
where
\begin{eqnarray*}
   I_1&=&  \left|(-\Delta)\left((\phi^2-1)\phi\right)\right|_2, \\
   I_2&=&  \left|(3\phi^2-1)f (\phi)\right|_2 ,\\
   I_3&=&M_1\left|(\mathcal{A}(\phi)-a)\right|_2,\\
   I_4&=&M_2\left|(\mathcal{B}(\phi)-b) f (\phi)\right|_2 .
\end{eqnarray*}

For $I_1$ we have
\[
  I_1=  6\phi|\nabla\phi|^2+3\phi^2\Delta \phi-\Delta\phi   
\]
Then by basic inequality we get
\begin{eqnarray*}
  I_1 &\leq&  6|\phi|_\infty|\nabla\phi|_4^2+3|\phi|_\infty^2|\Delta \phi|_2+|\Delta\phi|_2  \\
\end{eqnarray*}
The Poincaré inequality yields $|\phi|_\infty\leq |\phi+1|_\infty+1\leq \Cp |\nabla\phi|_2+1$
where $\Cp$ is the Poincaré constant.
Moreover, by the Sobolev embedding $H^1(\qspace)\subset L^4(\qspace)$ we get $|\nabla \phi|_4^2\leq c|\nabla\phi|_{H^1}^2$ for some constant $c>0$ independent by $\phi$.
Then, using  repeatedly the Young inequality we get that there exists $c>0$ such that 
\begin{eqnarray*}
  I_1  &\leq&  6c \left(\Cp |\nabla\phi|_2+1 \right) |\nabla\phi|_{H^1}^2+3 \left(\Cp |\nabla\phi|_2+1 \right)^2|\Delta \phi|_2+|\Delta\phi|_2   \\
	  &\leq&  c\left(1+ |\nabla\phi|_2^2  + |\nabla\phi|_{H^1}^4+|\nabla\phi|_2^4+|\Delta \phi|_2^2+|\Delta\phi|_2\right).
\end{eqnarray*}
Notice that $|\nabla \phi|_{H^1}\leq |\phi|_{H^2}\leq c(|\phi|_2+|\nabla\phi|_2+|\Delta\phi|_2)$ for some $c>0$ independent of $\phi$.
Then, still using Young inequality, there exists a constant $c_1>0$ such that
\[
  I_1\leq  c_1 \left(1+|\phi|^2_4+|\nabla\phi|_2^4 +|\Delta \phi|_2^4  \right).
\]
For $I_2$, using the expression of $f(\phi)$ and the Poincaré inequality $|\phi+1|_\infty\leq C_p|\nabla\phi|_2$ we obtain\\
 \begin{eqnarray*}
 I_2  &\leq&     \left(3|\phi|_\infty^2+1\right)|f(\phi)|_2 \\
     &\leq&  \left(3(|\phi+1|_\infty+1)^2+1\right)\left( \frac12|\Delta\phi|_2+ \frac14\left(|\phi|_6^3+|\phi|_2\right) \right) \\
     &\leq&  \left(3(C_p|\nabla\phi|_2+1)^2+1\right)\left( \frac12|\Delta\phi|_2+ \frac14\left(|\phi|_6^3+|\phi|_2\right) \right). 
\end{eqnarray*} 
By applying  the inequality  $(\alpha+\beta)^2\leq 2\alpha^2+2\beta^2$  repeatedly, we find that there exists a constant $c_2>0$ such that
\[
  I_2\leq  c_2 \left(|\Delta\phi|_2^2 +|\nabla\phi|_2^4+|\phi|_6^6 +  |\phi|_2^2+1\right).
\]
Clearly, for $I_3$ there exists a constant $c_3>0$ such that
\[
  I_3\leq c_4\left(|\phi|_2+1   \right)
\]
For $I_4$ we have, by the expression of $\mathcal{B}(\phi)$ and $f(\phi)$,
\begin{eqnarray*}
  I_4  &\leq&   M_2(\mathcal{B}(\phi)+b)|f (\phi)|_2\\
           &\leq&  \left(\frac{1}{2}|\nabla \phi|_2^2+\frac{1}{4 }(|\phi|_4^4+2|\phi|_2^2+|\qspace|)+b\right)\left(  \frac12|\Delta\phi|_2+ \frac14\left(|\phi|_6^3+|\phi|_2\right)\right)\\
\end{eqnarray*}
Using the inequality $(\alpha+\beta)^2\leq 2\alpha^2+2\beta^2$ repeatedly, it is easy to show that there exists a constant $c_4>0$ such that
\[
  I_4 \leq c_4\left(  |\phi|_4^8+ |\phi|_2^4+  |\phi|_6^6+  |\nabla \phi|_2^4 + |\Delta\phi|_2^2+ 1\right). 
\]
Taking into account the estimates on $I_1,\ldots,I_4$, by \eqref{eq.ineq.E2} we deduce that there exists $c>0$ such that
\[
 I_1+I_2+I_3+I_4\leq C(|\phi|_2^4+|\phi|_4^8+|\phi|_6^6+|\nabla \phi|_2^4+ |\Delta\phi|_2^4)\leq c(1+(E(\phi))^2).   \qedhere
 \]
\end{proof}

\begin{Prop} \label{prop.CEestimate}
There exists a constant $c>0$ such that  for any $\phi+1\in C_0^\infty(\qspace)$
\begin{equation}  \label{eq.CEestimate}
    \left|\Cb^*\frac{\delta E}{\delta \phi}(\phi)\right|_2 \leq c\left(1+|\phi|_4^8+|\phi|_6^3+|\nabla\phi|_2^4+|\Delta \phi|_2^2\right) 
\end{equation}
\end{Prop}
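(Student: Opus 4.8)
The only structural input I will use from Hypothesis \ref{hyp.trace} is that $\Tr[\Cb^*\Delta^2\Cb]=\Tr[(\Delta\Cb)^*(\Delta\Cb)]=\|\Delta\Cb\|_{HS}^2<\infty$, so that $\Delta\Cb$ is Hilbert--Schmidt and hence its adjoint $\Cb^*\Delta$ is a bounded operator on $\sL^2(\qspace)$, with $\|\Cb^*\Delta\|_{\mathcal L}\le\|\Delta\Cb\|_{HS}$. Writing $\Cb^*=(\Cb^*\Delta)\,\Delta^{-1}$, where $\Delta^{-1}$ is the (bounded, self-adjoint) inverse of the Laplacian with the boundary conditions $\phi+1=\Delta\phi=0$, this says precisely that $\Cb^*$ gains two derivatives: $|\Cb^* g|_2\le\|\Cb^*\Delta\|_{\mathcal L}\,|\Delta^{-1}g|_2$ for every $g$, and by elliptic regularity together with the embedding $\sH^2(\qspace)\hookrightarrow\sL^\infty(\qspace)$ (valid for $N\le 3$) one also has $|\Delta^{-1}g|_2\le c\,|g|_1$. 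This two-derivative gain is what makes a bound with the comparatively low exponents on the right-hand side of \eqref{eq.CEestimate} possible; it is strictly stronger than the naive $\le c(1+E(\phi))$ that would follow from Proposition \ref{prop.ineq.E3}.

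The plan is then to split $\frac{\delta E}{\delta\phi}(\phi)$ according to \eqref{eq.varE2} into a part in divergence form and a genuinely lower-order remainder,
\[
 \frac{\delta E}{\delta\phi}(\phi)=\Delta\big(\Delta\phi-(\phi^3-\phi)\big)+g(\phi),\qquad g(\phi)=(3\phi^2-1)f(\phi)+M_1(\mathcal A(\phi)-a)+M_2(\mathcal B(\phi)-b)f(\phi),
\]
and to treat the two pieces differently. For the first piece I factor out the outer Laplacian and use that $\phi+1\in C_0^\infty(\qspace)$ (so that all integrations by parts are licit and $\Delta$ is self-adjoint), obtaining $\Cb^*\Delta(\Delta\phi-\phi^3+\phi)=(\Cb^*\Delta)(\Delta\phi-\phi^3+\phi)$ and hence, by boundedness of $\Cb^*\Delta$ on $\sL^2$, the bound $c(|\Delta\phi|_2+|\phi|_6^3+|\phi|_2)$. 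This is exactly where the smoothing pays off: the fourth-order term $\Delta^2\phi$ contributes only $|\Delta\phi|_2$ and the term $\Delta(\phi^3)$ contributes only $|\phi^3|_2=|\phi|_6^3$, producing after Young's inequality the two characteristic terms $|\Delta\phi|_2^2$ and $|\phi|_6^3$ of \eqref{eq.CEestimate}, while $|\phi|_2\le c(1+|\nabla\phi|_2)\le c(1+|\nabla\phi|_2^4)$ by the Poincaré inequality for $\phi+1$.

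For the remainder I would use $|\Cb^* g(\phi)|_2\le c\,|g(\phi)|_1$ and bound $|g(\phi)|_1$ term by term, leaning on the already proved estimate \eqref{eq.ineq.E2}. The cubic-times-Laplacian contributions are handled by Cauchy--Schwarz, $|\phi^2\Delta\phi|_1\le|\phi|_4^2|\Delta\phi|_2\le\tfrac12|\phi|_4^4+\tfrac12|\Delta\phi|_2^2$, and the purely polynomial contributions such as the quintic $3\phi^5$ by the interpolation $|\phi|_5^5\le|\phi|_\infty|\phi|_4^4\le c(1+|\Delta\phi|_2)|\phi|_4^4$ (using $\sH^2\hookrightarrow\sL^\infty$) followed by Young, landing in $|\phi|_4^8+|\Delta\phi|_2^2$; the affine term $M_1(\mathcal A(\phi)-a)$ is trivial. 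A useful simplification for the last summand is that $\mathcal B(\phi)-b$ is a scalar, so $(\mathcal B(\phi)-b)\Delta\phi=\Delta\big((\mathcal B(\phi)-b)\phi\big)$ can be absorbed into the divergence-form piece and estimated through $\Cb^*\Delta$ as $c\,|\mathcal B(\phi)-b|\,|\phi|_2$, which fits the right-hand side after one more Young step.

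The step I expect to be the genuine obstacle is the control of the remaining nonlinear coupling $(\mathcal B(\phi)-b)(\phi^3-\phi)$, i.e. the $\mathcal B$-weighted cubic: here $|\mathcal B(\phi)-b|$ already carries degree four in $\phi$ while $\phi^3$ carries degree three, so even after exploiting all the smoothing the natural estimate produces genuinely high-degree products such as $|\phi|_4^4|\phi|_6^3$. Fitting these under the prescribed exponents $|\phi|_4^8$, $|\phi|_6^3$, $|\nabla\phi|_2^4$, $|\Delta\phi|_2^2$ requires the most delicate orchestration of Gagliardo--Nirenberg interpolation, the Poincaré relation $|\nabla\phi|_2\le c|\Delta\phi|_2$, and repeated Young's inequalities, arranged so that no surviving monomial exceeds the admissible degree. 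Everything else is routine once the two-derivative smoothing of $\Cb^*$ has been exploited.
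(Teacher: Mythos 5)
Your argument is correct in substance and, on the two leading terms, coincides with the paper's: both proofs read $\Cb^*\Delta^2\phi$ and $\Cb^*\Delta(\phi^3-\phi)$ as the bounded extension $\overline{\Cb^*\Delta}$ of Remark \ref{rem.C} applied to $\Delta\phi$ and to $\phi^3-\phi$, which is where $|\Delta\phi|_2$ and $|\phi|_6^3$ come from. Where you genuinely diverge is on the remainder $g(\phi)$. The paper uses only that $\Cb^*$ is bounded on $\sL^2(\qspace)$ and recycles verbatim the $\sL^2$ bounds on the corresponding terms of Proposition \ref{prop.ineq.E3}; its intermediate estimates therefore carry monomials such as $|\phi|_6^6$, $|\phi|_2^4$, $|\phi|_4^4$, and the final reduction to the displayed right-hand side of \eqref{eq.CEestimate} is left implicit (it is harmless for the only downstream use, where the bound is squared and absorbed into $c(1+E(\phi)^2)$ via \eqref{eq.ineq.E2}). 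Your factorization $\Cb^*=(\Cb^*\Delta)\Delta^{-1}$, combined with $\Delta^{-1}:\sL^1(\qspace)\to\sL^2(\qspace)$ (self-adjointness plus $\sH^2\hookrightarrow\sL^\infty$ for $N\leq3$), exploits a full two-derivative smoothing that the paper never uses, and it lets you measure the polynomial part of $g(\phi)$ in $\sL^1$ rather than $\sL^2$, which lowers every degree and lands directly on the stated exponents. Your route is therefore different and in fact sharper; the paper's is shorter because it reuses Proposition \ref{prop.ineq.E3} wholesale.

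Two adjustments would make your write-up complete. First, the step you flag as ``the genuine obstacle'' is not one inside your own framework: the feared product $|\phi|_4^4|\phi|_6^3$ only arises if $(\mathcal{B}(\phi)-b)f(\phi)$ is measured in $\sL^2$, as the paper does. In $\sL^1$ the $\mathcal{B}$-weighted cubic closes in one line, since by H\"older on the bounded domain ($|\phi|_3^3\leq c|\phi|_4^3$) and Young's inequality
\[
 \left|(\mathcal{B}(\phi)-b)(\phi^3-\phi)\right|_1\leq c\left(1+|\nabla\phi|_2^2+|\phi|_4^4\right)\left(1+|\phi|_4^3\right)\leq c\left(1+|\nabla\phi|_2^4+|\phi|_4^8\right),
\]
every surviving monomial having degree at most $4+3=7<8$ in the $|\phi|_4$ scale. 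Second, when you absorb $(\mathcal{B}(\phi)-b)\Delta\phi$ into the divergence-form piece, write it as $\Delta\bigl((\mathcal{B}(\phi)-b)(\phi+1)\bigr)$ rather than $\Delta\bigl((\mathcal{B}(\phi)-b)\phi\bigr)$: since $\phi=-1$ on $\partial\qspace$, the function $(\mathcal{B}(\phi)-b)\phi$ does not vanish on the boundary, and the identity $\Cb^*\Delta u=\overline{\Cb^*\Delta}\,u$ behind Remark \ref{rem.C} is only licit for $u\in\sH^2(\qspace)\cap\sH^1_0(\qspace)$; the same remark applies to your main splitting, where $\Delta\phi-\phi^3+\phi$ does belong to $\sH^2(\qspace)\cap\sH^1_0(\qspace)$ because $\phi+1\in\mathcal{C}_0^\infty(\qspace)$.
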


\begin{proof}
 By \eqref{eq.varE2} we have
\[
  \Cb^*\frac{\delta E}{\delta \phi}(\phi)= I_1+I_2+I_3+I_4+I_5,
\]
where ( we recall that $k=\varepsilon=1$ )
\begin{eqnarray*}
   I_1&=&  \Cb^*\Delta^2\phi, \\
   I_2&=&  \Cb^*(-\Delta)\left((\phi^2-1)\phi\right), \\
   I_3&=&  \Cb^*(3\phi^2-1)f (\phi) ,\\
   I_4&=&M_1\Cb^*(\mathcal{A}(\phi)-a),\\
   I_5&=&M_2\Cb^*(\mathcal{B}(\phi)-b) f (\phi) .
\end{eqnarray*}
By hypothesis \ref{hyp.trace} and remark \ref{rem.C}, there exists $c_1>0$ such that 
\[
 |I_1|_2\leq  c_1|\Delta\phi|_2.
\]
For $I_2$, still by hypothesis \ref{hyp.trace} and remark \ref{rem.C} there exists $c>0$ such that 
\[
  |I_2|_2\leq  c|(\phi^2-1)\phi|_2   
\]
Then by basic inequality we get, for some $c_2>0$,
\begin{eqnarray*}
  |I_2|_2 &\leq&  c_2(|\phi|_6^3+|\phi|_2).
\end{eqnarray*} 
Since $\Cb^*$ is a bounded linear operator, the terms $I_3$, $I_4$, $I_5$ can be estimated as done for Proposition  \ref{prop.ineq.E3} to get 
\[
  |I_3|_2\leq  c_3 \left(|\phi|_4^4 + |\Delta\phi|_2^2 + |\phi|_6^6+ |\phi|_2^2+1\right)
\] 
\[
  |I_4|_2\leq c_4\left(|\phi|_2+1   \right)
\] 
\[
  |I_5|_2 \leq c_5\left(   |\nabla \phi|_2^4 + |\phi|_4^8+ |\phi|_2^4+  |\Delta\phi|_2^2+ |\phi|_6^6+ 1\right). 
\]
for some constant $c_3$, $c_4$, $c_5$ independent by $\phi$.
Taking into account the estimates for  $I_i$, $i=1,\ldots,5$, the claim follows. 
\end{proof}


The second variational of $E$ in $\phi$ is a bilinear form on $C_0^\infty(\qspace)\otimes C_0^\infty(\qspace)$  and takes the form
\begin{eqnarray*}
 \left(\dfrac{\delta^2 E}{\delta\phi^2}(\phi)\right)(\psi,\rho)
  &=&  \int \left(f' (\phi)\psi\right)\left(f' (\phi)\rho\right)\ \ddx 
   + \int_\qspace f (\phi)\left(f'' (\phi)(\psi,\rho) \right)\ \ddx 
   + M_1\mathcal{A}(\psi)\mathcal{A}(\rho) \\
   && +M_2\left(\int_\qspace f (\phi)\rho\ \ddx\right)\left(\int_\qspace f(\phi)\psi \ \ddx\right) 
 +M_2( \mathcal{B}(\phi)-b)\int_\qspace\left ( f' (\phi)\rho\right)\psi \ \ddx
\end{eqnarray*}
where
\[
    f'' (\phi)(\psi,\rho)=  6\phi \psi\rho.
\]
When $\psi=\rho$ it takes the form
 \begin{eqnarray*} 
    \left(\dfrac{\delta^2 E}{\delta\phi^2}(\phi)\right)(\psi,\psi)
  &=&  \int \left(f' (\phi)\psi\right)^2 \ \ddx 
   + \int_\qspace f (\phi)\left(f'' (\phi)(\psi,\psi) \right)\ \ddx 
   + M_1\left(\mathcal{A}(\psi)\right)^2\\
   &&  +M_2\left(\int_\qspace f (\phi)\psi \ \ddx\right)^2  
   +M_2( \mathcal{B}(\phi)-b)\int_\qspace\left ( f' (\phi)\psi\right)\psi \ \ddx.
\end{eqnarray*}


\begin{Prop} \label{Le.trace}
 There exists a constant $c>0$ such that for any $\phi+1,\psi\in C_0^\infty(\qspace)$ it holds
 \[
   \left(\dfrac{\delta^2 E}{\delta\phi^2}(\phi)\right)(\psi,\psi)
     \leq c\left(|\Delta\phi|_2^2+|\nabla\phi|_2^4+|\phi|_4^2+1\right)         \left(|\psi|_2^2+|\nabla \psi|_{2}^2+|\Delta \psi|_{2}^2\right)
 \]
\end{Prop}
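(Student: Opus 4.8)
The plan is to expand the quadratic form into the five summands displayed just above the statement,
\[
\left(\frac{\delta^2 E}{\delta\phi^2}(\phi)\right)(\psi,\psi)=J_1+J_2+J_3+J_4+J_5,
\]
with $J_1=\int_\qspace(f'(\phi)\psi)^2\,\ddx$, $J_2=6\int_\qspace f(\phi)\,\phi\psi^2\,\ddx$, $J_3=M_1(\mathcal A(\psi))^2$, $J_4=M_2(\int_\qspace f(\phi)\psi\,\ddx)^2$ and $J_5=M_2(\mathcal B(\phi)-b)\int_\qspace(f'(\phi)\psi)\psi\,\ddx$, and then to bound each $J_i$ by $c\Lambda G$, where I abbreviate $\Lambda=|\Delta\phi|_2^2+|\nabla\phi|_2^4+|\phi|_4^2+1$ and $G=|\psi|_2^2+|\nabla\psi|_2^2+|\Delta\psi|_2^2$. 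Two preliminary reductions will be used repeatedly. First, since $\phi+1$ vanishes on $\partial\qspace$, Poincaré gives $|\phi|_2\leq c(1+|\nabla\phi|_2)$, and the embedding $\sH^1\subset\sL^4(\qspace)$ then yields $|\phi|_4^4\leq c\|\phi\|_{\sH^1}^4\leq c(1+|\nabla\phi|_2^4)\leq c\Lambda$; combined with \eqref{eq.termeB} this also gives $\mathcal B(\phi)\leq c(1+|\nabla\phi|_2^2+|\phi|_4^4)\leq c\Lambda$. Second, in dimension $N\leq3$ one has $\sH^2\subset\sL^\infty(\qspace)$, so $|\psi|_\infty^2\leq c(|\psi|_2^2+|\Delta\psi|_2^2)\leq cG$ and $|\psi|_4^2\leq c\|\psi\|_{\sH^1}^2\leq cG$, while $|\phi|_\infty\leq \Cp|\nabla\phi|_2+1$ gives $|\phi|_\infty^2\leq c(1+|\nabla\phi|_2^2)\leq c\Lambda$.

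The terms $J_1,J_2,J_3$ are routine. For $J_1=|-\Delta\psi+(3\phi^2-1)\psi|_2^2$ I expand via $(a+b)^2\leq2a^2+2b^2$: the piece $|\Delta\psi|_2^2$ is bounded by $G$, while $(3\phi^2-1)^2\leq18\phi^4+2$ gives $|(3\phi^2-1)\psi|_2^2\leq c|\psi|_\infty^2|\phi|_4^4+c|\psi|_2^2\leq c\Lambda G$ by the first preliminary. For $J_2$ I use $f(\phi)=-\Delta\phi+\phi^3-\phi$ and integrate by parts the Laplacian term, $\int_\qspace(-\Delta\phi)\phi\psi^2\,\ddx=\int_\qspace|\nabla\phi|^2\psi^2\,\ddx+2\int_\qspace\phi\psi\,\nabla\phi\cdot\nabla\psi\,\ddx$ (the boundary term drops since $\nabla\phi=0$ and $\psi=0$ near $\partial\qspace$); placing $\psi$ in $\sL^\infty$ and $\phi$ in $\sL^4$ or $\sL^\infty$, each resulting piece reduces to $c\Lambda G$ through the preliminaries and Young's inequality. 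Finally $J_3\leq M_1|\qspace|\,|\psi|_2^2\leq c\Lambda G$ by Cauchy--Schwarz.

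The pair $J_4,J_5$ --- the contributions of the surface-area penalty $\tfrac12 M_2(\mathcal B(\phi)-b)^2$ --- is where the work concentrates and is the main obstacle, since here the energy depends \emph{cubically} on $\phi$. The starting observation is the identification $\int_\qspace f(\phi)\psi\,\ddx=\mathcal B'(\phi)\psi$ and $\int_\qspace(f'(\phi)\psi)\psi\,\ddx=\mathcal B''(\phi)(\psi,\psi)$. For the second factor of $J_5$ I write $\mathcal B''(\phi)(\psi,\psi)=|\nabla\psi|_2^2-|\psi|_2^2+3\int_\qspace\phi^2\psi^2\,\ddx$ and control the cubic remainder by Hölder and $\sH^1\subset\sL^4(\qspace)$, namely $3\int_\qspace\phi^2\psi^2\,\ddx\leq3|\phi|_4^2|\psi|_4^2\leq c|\phi|_4^2G$, so that $\mathcal B''(\phi)(\psi,\psi)\leq c(1+|\phi|_4^2)G$. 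For $J_4$ I integrate by parts once, $\int_\qspace f(\phi)\psi\,\ddx=\int_\qspace\nabla\phi\cdot\nabla\psi\,\ddx+\int_\qspace(\phi^3-\phi)\psi\,\ddx$, and estimate the cubic integral by $|\int_\qspace\phi^3\psi\,\ddx|\leq|\phi|_4^3|\psi|_4$ before squaring.

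With the bounds above, and using $|\mathcal B(\phi)-b|\leq c\Lambda$ from the first preliminary, both $J_4$ and $J_5$ are readily seen to be $\leq c\Lambda^{3/2}G$. The remaining --- and genuinely delicate --- step is to sharpen this to $c\Lambda G$: one must distribute the Sobolev and Hölder exponents in $\int_\qspace f(\phi)\psi\,\ddx$, and pair $\mathcal B''(\phi)(\psi,\psi)$ against the penalty factor $\mathcal B(\phi)-b$, so that the cubic $\phi$-dependence is absorbed \emph{exactly} into the quantities $|\phi|_4^2$, $|\nabla\phi|_2^4$ and $|\Delta\phi|_2^2$ occurring in $\Lambda$, with no surplus power of $\Lambda$. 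I expect this matching --- carried out through \eqref{eq.bound.B}, \eqref{eq.termeB} and repeated applications of Young's inequality --- to be the crux of the argument; once it is secured, summing the five bounds yields the claimed inequality.
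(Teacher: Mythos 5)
Your five-term decomposition and your estimates for $J_1$, $J_2$, $J_3$ coincide with the paper's own proof (which labels the terms $I_1,\dots,I_5$), and your use of $\sH^2(\qspace)\subset\sL^\infty(\qspace)$ for $\psi$ is in fact sounder than the paper's appeal to $|\psi|_\infty\leq\Cp|\nabla\psi|_2$. The genuine gap is exactly where you located it: you never prove the claimed bound for $J_4$ and $J_5$; you reach $c\Lambda^{3/2}G$ and defer the improvement to $c\Lambda G$ as a ``crux'' you expect can be secured. It cannot: the inequality as stated is false, and your exponent $3/2$ is sharp. Indeed, fix $\chi,\psi\in\mathcal{C}_0^\infty(\qspace)$ nonnegative with overlapping supports and set $\phi_\lambda=\lambda\chi-1$, so that $\phi_\lambda+1\in\mathcal{C}_0^\infty(\qspace)$. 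Then
\[
  \int_\qspace f(\phi_\lambda)\psi \,\ddx \;=\; \lambda^3\int_\qspace \chi^3\psi\,\ddx \;+\; O(\lambda^2),
\]
so $J_4\geq \tfrac12 M_2\lambda^6\left(\int_\qspace\chi^3\psi\,\ddx\right)^2$ for large $\lambda$, while $J_1,J_3,J_4\geq0$ identically and $J_2$, $J_5$ are positive for large $\lambda$ (their leading terms are $6\lambda^4\int_\qspace\chi^4\psi^2\,\ddx$ and $\tfrac34 M_2\lambda^6|\chi|_4^4\int_\qspace\chi^2\psi^2\,\ddx$). Hence the left-hand side grows like $\lambda^6$, whereas $\Lambda(\phi_\lambda)\sim\lambda^4|\nabla\chi|_2^4$ and $G(\psi)$ is fixed, so the right-hand side grows only like $\lambda^4$; no constant $c$ works. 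The obstruction is structural: $\int_\qspace f(\phi)\psi\,\ddx$ is cubic in $\phi$, so $J_4$ is sextic, while $\Lambda$ is only quartic in $\phi$.

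You should not read this as a failure of your method relative to the paper's, because the paper's proof has the same defect: its bound for $I_4$ contains $|\phi|_6^6$, its ``elementary calculus'' claim for $I_5$, namely $\left(\tfrac12|\nabla\phi|_2^2+\tfrac12(|\phi|_4^4+1)\right)\left(3|\phi|_4^2+1\right)\leq c\left(|\nabla\phi|_2^4+|\phi|_4^4+1\right)$, is itself false, and the closing sentence ``summing up \dots the result follows'' is unjustified since neither $|\phi|_6^6$ nor $|\phi|_4^4$ is dominated by $\Lambda$. What your computation and the paper's both actually prove is the proposition with the larger factor $c\left(1+|\Delta\phi|_2^2+|\nabla\phi|_2^4+|\phi|_4^8+|\phi|_6^6\right)$ in place of $\Lambda$, and that is all that is ever needed: by \eqref{eq.ineq.E2} this factor is $\leq c(1+E(\phi))$, which is precisely how the result is used in Proposition \ref{prop.trace} and in the proof of Theorem \ref{thm.moments}. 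With this corrected target the delicate terms become easy: Cauchy--Schwarz gives $J_4\leq M_2|f(\phi)|_2^2|\psi|_2^2\leq 2M_2E(\phi)|\psi|_2^2$ since $|f(\phi)|_2^2=2\mathcal{E}(\phi)\leq 2E(\phi)$, and $J_5\leq c(1+E(\phi))^{3/4}G$ since $\tfrac12 M_2(\mathcal{B}(\phi)-b)^2\leq E(\phi)$ and $|\phi|_4^2\leq c(1+E(\phi))^{1/4}$. One final caveat: your preliminary $|\phi|_\infty\leq\Cp|\nabla\phi|_2+1$ (borrowed from the paper) is false for $N=2,3$, where $\sH^1\not\subset\sL^\infty$; in your argument it is avoidable (e.g.\ bound $\int_\qspace\phi\psi\,\nabla\phi\cdot\nabla\psi\,\ddx$ by $|\phi|_4|\nabla\phi|_2|\psi|_\infty|\nabla\psi|_4$), but it must not be invoked as stated.
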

\begin{proof}
Let us write 
\[
 \dfrac{\delta^2 E(\phi_{})}{\delta \phi_{}^2}(\psi,\psi) = I_1+I_2+I_3+I_4+I_5,
\]
where
\begin{eqnarray*}
  I_1  &=&   \int \left(f' (\phi_{})\psi\right)^2 \ \ddx \\
  I_2 &=&  \int_\qspace f (\phi_{})\left(f'' (\phi_{})(\psi,\psi) \right)\ \ddx \\
  I_3 &=&  M_1\left(\mathcal{A}(\psi)\right)^2\\
  I_4  &=& M_2\left(\int_\qspace f (\phi_{})\psi \ \ddx\right)^2  \\
  I_5  &=& M_2( \mathcal{B}(\phi_{})-b)\int_\qspace\left ( f' (\phi_{})\psi\right)\psi \ \ddx.
\end{eqnarray*}
For $I_1$ we have 
\begin{eqnarray*}
  I_1   &=&   |\left(f' (\phi)\psi\right)|_2^2\\
        &=&  |-  \Delta\psi + (3\phi^2-1)\psi|_2^2\\
      &\leq&   \left( |\Delta \psi|_2 +  (3|\phi|_\infty^2+1) | \psi|_2 \right)^2
\end{eqnarray*}
Since $|\phi|_\infty\leq |\phi+1|_\infty+1$ and  by Poincar\'e inequality there exists a constant $\Cp>0$ such that $|\phi+1|_\infty\leq \Cp|\nabla\phi|_2$,
the right hand side is bounded by
\[
   \left( |\Delta \psi|_2+(3(\Cp |\nabla\phi|_2+1)+1) | \psi|_2\right)^2.
\]
Then it follows that there exists a constant $d_1>0$ such that 
\[
  I_1\leq d_1|\Delta \psi|_2^2+d_1\left(|\nabla\phi|_2^2+1\right)|\psi|_2^2.
\]
For $I_2$ we have, using H\"older inequality,
\begin{eqnarray*}
  I_2   &=& 6\int_\qspace\left(-\Delta \phi+  (\phi^2-1)\phi\right)\phi\psi^2\ \ddx\\
        &\leq&  6\left( \int_\qspace|\Delta \phi \phi|\ddx + \int_\qspace|\phi^2-1|\phi^2\ddx   \right)|\psi|_\infty^2 \\
        &\leq& 6\left(\frac{1}{2}|\Delta \phi|^2_2
                +\frac{1}{2}|\phi|_2^2 + |\phi|_4^4+ |\phi|_2^2\right)|\psi|_\infty^2.
\end{eqnarray*}
Then there exists $d_2$ such that
\[
  I_2\leq d_2\left( |\Delta \phi|^2_2+|\phi|_4^4 +|\phi|_2^2 \right)|\psi|_\infty^2
\]
The term $I_3$ is easily bounded by
\[
   I_3\leq M_1 |\qspace||\psi|_2^2 = d_3 |\psi|_2^2,
\]
where $d_3=M_1 |\qspace|$.
For $I_4$ we have
\[
 I_4\leq M_2|f (\phi)|_2^2|\psi|_2^2.
\]
It is easy to see that
\[
  |f (\phi)|_2\leq  |\Delta \phi|_2+ (|\phi|_6^3+|\phi|_2)
\]
holds. Taking into account the inequality $(a+b)^2\leq 2 a^2+2b^2$,  there exists a constant $d_4>0$ such that 
\[
  I_4\leq  d_4\left(|\Delta \phi|_2^2+|\phi|_6^6+|\phi|_2^2\right)|\psi|_2^2.
\]
For $I_5$ we can use H\"older inequality to get
\begin{equation} \label{eq.I5}
   I_5 \leq M_2\left(\mathcal{B} (\phi)+b \right)|\qspace|^{1/2}|f' (\phi)\psi|_2.
\end{equation}
Since 
\[ 
  \mathcal{B} (\phi)=\frac{1}{2}|\nabla\phi|_2^2+\frac{1}{4}|\phi^2-1|_2^2
\]
using Young inequality and H\"older inequality we get
\begin{equation*} \label{eq.I5a}
  \mathcal{B} (\phi)    \leq    \frac{1}{2}|\nabla\phi|_2^2+\frac{1}{2 }\left(|\phi^2|_2^2+1\right)
       =\frac{1}{2}|\nabla\phi|_2^2+\frac{1}{2 }\left(|\phi|_4^4+1\right)
\end{equation*}
For the last term on the right-hand side we can argue as for $I_1$ to get  
\begin{eqnarray} \label{eq.I5b}
   |\left(f' (\phi)\psi\right)|_2    &\leq&       |\Delta\psi|_2 + |(3\phi^2-1)\psi|_2 \notag\\
   &\leq &    |\Delta\psi|_2 + |3\phi^2-1|_2|\psi|_\infty \notag\\
   &\leq&    |\Delta\psi|_2 + \left(3|\phi|_4^2+1\right)|\psi|_\infty 
\end{eqnarray}
Taking into account \eqref{eq.I5}, \eqref{eq.I5a} and \eqref{eq.I5b}, the term $I_5$ is bounded by
\[
  I_5\leq M_2\left( \frac{1}{2}|\nabla\phi|_2^2+\frac{1}{2 }\left(|\phi|_4^4+1\right)+b \right)
               \left(   |\Delta\psi|_2 + \left(3|\phi|_4^2+1\right)|\psi|_\infty  \right)
\]
Elementary calculus and inequality $ab\leq a^2/2+b^2/2$ show that for some constant $c>0$, 
\[
  \left( \frac{1}{2}|\nabla\phi|_2^2+\frac{1}{2}\left(|\phi|_4^4+1\right)\right) \left(3|\phi|_4^2+1\right)
  \leq c\left(|\nabla\phi|_2^4+|\phi|_4^4+1 \right).
\]
Then there exists a constant $d_5$ such that 
\[
  I_5\leq d_5\left( |\nabla\phi|_2^4+|\phi|_4^4+1   \right)\left(|\psi|_2 +|\psi|_\infty\right)
\]
Summing up the bounds for $I_1, I_2,I_3,I_4,I_5$ and taking into account the Poincaré inequality $|\psi|_\infty\leq C_p|\nabla\psi|_2$ the result follows.
  \end{proof}


\subsection{Trace estimates}

We recall that we have made the following assumption on the operator $\Ca$ and $\Cb$.
\[ 
  \Tr[\Ca^*\Ca]<\infty,\qquad \Tr[\Cb^*\Delta^2 \Cb]<\infty.
\]
Since for $X,Y$ two separable Hilbert spaces and $A:X\to Y$ a linear operator, we have
 \[
  \Tr[A^*A] = \Tr[AA^*].
 \]
thus $\Tr[\Cb^*\Delta^2\Cb]<\infty \Longleftrightarrow \Tr[\Delta \Cb\Cb^*\Delta]<\infty$.

\begin{Rem} \label{rem.C}
 If $\Tr[\Cb^*\Delta^2\Cb]<\infty $, then $\Cb^*\Delta: \sH^2(\qspace)\cap \sH^1_0(\qspace)\to \sK$ {\em (in the definition of the noise, we usually have $\Cb: \sK\to \sH$, with $\sK\neq \sH$ in general)} 
 is closable and can be  extended to a bounded linear operator $\overline{\Cb^*\Delta}:\sH\to \sK$.
\end{Rem}
\begin{proof}
  Let $u\in \sH^2(\qspace)\cap \sH^1_0(\qspace)$ and let $(e_k)_k$ be a orthonormal basis of $\sK$. Then
  \[
    \|\Cb^*\Delta u\|_\sK^2=\sum_{k}\langle  \Cb^*\Delta u, e_k\rangle^2= \sum_{k}\langle  u,\Delta \Cb e_k\rangle^2\leq \|u\|_{\sH}^2 \sum_{k}| \Delta \Cb e_k|_K^2
    = \|u\|_\sH^2 \Tr[\Cb^*\Delta^2 \Cb ].
  \]
( notice that we have used the same notation $\langle \cdot,\cdot \rangle$ for the scalar product in $K$ and in $H$ ).
Then by the closed graph theorem we obtain the result. 
\end{proof}

\begin{Prop} \label{Prop.trace}
If $\Tr[\Cb^{*}\Delta^{2} \Cb]<\infty$ then $\Tr[\Cb^{*}\Cb] <+\infty$. Moreover the sequences
\[
( |\Cb e_{i} |_{\infty})_{i\in\N}, \quad ( \|\Cb e_{i} \|_{W^{2,2}(\qspace)})_{i\in\N}, \quad ( |\nabla C e_{i} |_{3})_{i\in\N}
\]
are in $\ell^{2}(\N)$ and there exists a constant $c>0$ such that
\begin{equation*} 
  \sum_{i\in \N} \left( |\Cb e_{i} |_{\infty}^2  +\|\Cb e_{i}\|_{W^{2,2}(\qspace)}^2+ |\nabla \Cb e_{i} |_{3}^2\right)   \leq    c\Tr[\Cb^{*}\Delta^{2} \Cb].
\end{equation*}
\end{Prop}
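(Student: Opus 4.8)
The plan is to reduce the whole statement to a single elliptic regularity estimate for the Dirichlet Laplacian on the bounded smooth domain $\qspace$, combined with the Sobolev embeddings that are available in dimension $N\le 3$. Write $v_i=\Cb e_i$. The boundary conditions attached to the $\phi$-equation force $v_i=\Delta v_i=0$ on $\partial\qspace$, so each $v_i$ lies in the domain of the square of the Dirichlet Laplacian; reading $\Delta^2$ in this sense and integrating by parts once, I would first record the identity
\[
  \Tr[\Cb^*\Delta^2\Cb]=\sum_{i\in\N}\la \Delta^2 v_i,v_i\ra=\sum_{i\in\N}|\Delta v_i|_2^2 .
\]
This turns the abstract trace hypothesis into the concrete statement $(|\Delta v_i|_2)_i\in\ell^2(\N)$, which is the only input the rest of the argument uses.

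Next I would invoke elliptic regularity for the Dirichlet Laplacian: there is a constant $c>0$, depending only on $\qspace$, with $\|u\|_{\sH^2}\le c\,|\Delta u|_2$ for every $u\in \sH^2(\qspace)\cap \sH^1_0(\qspace)$ (the scalar analogue of the norm equivalence $c^{-1}|Aw|_2\le\|w\|_{\sH^2}\le c|Aw|_2$ recorded earlier for the Stokes operator $A$). Applying it to each $v_i$ and summing gives
\[
  \sum_{i\in\N}\|\Cb e_i\|_{W^{2,2}(\qspace)}^2=\sum_{i\in\N}\|v_i\|_{\sH^2}^2\le c^2\sum_{i\in\N}|\Delta v_i|_2^2=c^2\,\Tr[\Cb^*\Delta^2\Cb]<\infty,
\]
which is the $W^{2,2}$-part of the claim. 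Since $|v_i|_2\le \|v_i\|_{\sH^2}$, the same bound yields $\Tr[\Cb^*\Cb]=\sum_i|v_i|_2^2<\infty$ as an immediate corollary.

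It then remains to control the norms $|v_i|_\infty$ and $|\nabla v_i|_3$ by $\|v_i\|_{\sH^2}$, and this is exactly where the restriction $N\le 3$ enters. Since $2>N/2$, the embedding $\sH^2(\qspace)\hookrightarrow \sL^\infty(\qspace)$ is continuous, giving $|v_i|_\infty\le c\|v_i\|_{\sH^2}$; and since $\nabla v_i\in \sH^1(\qspace)$ with $\sH^1(\qspace)\hookrightarrow \sL^6(\qspace)\hookrightarrow \sL^3(\qspace)$ on the bounded domain $\qspace$, one gets $|\nabla v_i|_3\le c\,|\nabla v_i|_6\le c\|v_i\|_{\sH^2}$. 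Squaring, summing over $i$, and using the $W^{2,2}$-bound of the previous step produces
\[
  \sum_{i\in\N}\left(|\Cb e_i|_\infty^2+\|\Cb e_i\|_{W^{2,2}(\qspace)}^2+|\nabla\Cb e_i|_3^2\right)\le c\,\Tr[\Cb^*\Delta^2\Cb],
\]
with $c$ absorbing the embedding constants; in particular each of the three sequences lies in $\ell^2(\N)$.

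The routine steps—elliptic regularity and the Sobolev embeddings—are standard, so the only genuinely delicate point is the very first one: justifying the trace identity, i.e. that $\Delta^2$ in Hypothesis \ref{hyp.trace} should be read as the square of the Dirichlet Laplacian acting on functions with $v_i=\Delta v_i=0$ on $\partial\qspace$, so that the integration by parts $\la\Delta^2 v_i,v_i\ra=|\Delta v_i|_2^2$ carries no boundary terms. Once this interpretation is fixed, the dimensional hypothesis $N\le 3$ is precisely what makes the $\sL^\infty$ and $W^{1,3}$ embeddings of $\sH^2$ available, and the proposition follows by summing the pointwise-in-$i$ inequalities.
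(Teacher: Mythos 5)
Your proof is correct and follows essentially the same route as the paper: both reduce the trace hypothesis to $\sum_i|\Delta \Cb e_i|_2^2<\infty$, then use elliptic regularity for the Dirichlet Laplacian to control the $W^{2,2}$ norms and Sobolev embeddings (valid for $N\le 3$) to control the $\sL^\infty$ and $\sL^3$ norms. The only cosmetic differences are that the paper gets the $\sL^\infty$ bound via $W^{2,2}\subset \mathcal{C}^{0,\gamma}$ and the gradient bound via $W^{1,3/2}\subset \sL^3$ plus H\"older, whereas you use $\sH^2\hookrightarrow\sL^\infty$ and $\sH^1\hookrightarrow\sL^6\hookrightarrow\sL^3$; both are equivalent in substance.
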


\begin{Proof}
Since $e_{i}$ is a component of an orthonormal basis then $\la \Cb^{*}\Delta^{2} \Cb e_{i},e_{i}\ra \leq \Tr[\Cb^{*}\Delta^{2}\Cb] <+\infty$ i.e. $\Delta \Cb e_i\in \sL^{2}(\qspace)$. This implies that
\begin{eqnarray*}
 \Tr[\Cb^*\Cb]= |\Cb e_0 |_2^2+\sum_{i=1}^\infty  |\Cb e_i  |_2^2
 &\leq& |\Cb e_0 |_2^2+\|(-\Delta)^{-1}\|_{\mathcal{L}(H)}^2\sum_{i=1}^\infty  |\Delta \Cb e_i |_2^2\\
 & \leq& \max\{1,\|(-\Delta)^{-1}\|_{\mathcal{L}(H)}^2\} \Tr[\Cb^{*}\Delta^{2} \Cb],
\end{eqnarray*}
and in particular $\Cb e_{i} \in W^{2,2}(\qspace)$. Denote $M =\max\{1,\|(-\Delta)^{-1}\|_{\mathcal{L}(H)}^2\}$. 
Moreover by Sobolev embedding $W^{2,2}(\qspace) \subset \mathcal{C}^{0,\gamma}$ for all $\gamma<1/2$ then there exists $M'>0$ such that
\[
\sum_{i=1}^\infty  |\Cb e_i |_\infty^2 \leq M'\sum_{i=1}^\infty \|\Cb e_i\|_{W^{2,2}(\qspace)}^2 \leq  4MM'\sum_{i=1}^\infty  |\Delta \Cb e_i |_2^2 \leq 4MM'\Tr[\Cb^{*}\Delta^{2} \Cb].
\]
Finally by Sobolev embedding $W^{1,\frac{3}{2}}(\qspace) \subset \sL^{3}(\qspace)$ and H\"older's inequality then there exists $M''>0$ such that
\begin{eqnarray*}
\sum_{i=1}^\infty \|\nabla \Cb e_i\|_3^2
&\leq& M''\sum_{i=1}^\infty \|\Cb e_i\|_{W^{1,3/2}(\qspace)}^2
\leq  M''\sum_{i=1}^\infty \left(|\Cb e_i |_{3/2}^{2} +  |\nabla Ce_i |_{3/2}^{2}\right)\\
&\leq&  M''(|\qspace|)^{1/3}\sum_{i=1}^\infty \left( |\Cb e_i |_{2}^{2} +  |\nabla \Cb e_i |_{2}^{2}\right)
\leq  M''(|\qspace|)^{1/3}\sum_{i=1}^\infty \|\Cb e_i\|_{W^{2,2}(\qspace)}^{2} < +\infty.
\end{eqnarray*}
\end{Proof}

\begin{Prop} \label{prop.trace}
Under hypothesis of proposition 2.1, there exists a constant $c>0$, depending on the operator $\Cb$, such that for any $\phi+1\in \mathcal{C}_0^\infty(\qspace)$ it holds
\[
\Tr\left[\Cb\Cb^*\dfrac{\delta^2 E (\phi)}{\delta \phi^2}\right]
\leq  c\left(|\Delta\phi|_2^2+|\nabla\phi|_2^4+|\phi|_4^2+1\right) \Tr[\Cb^{*}\Delta^{2} \Cb]
\]
\end{Prop}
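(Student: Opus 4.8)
The plan is to interpret the trace (as it appears in the It\^o correction term for $E(\phi)$) through an orthonormal basis of $\sK$, reduce each summand to the pointwise bound of Proposition \ref{Le.trace}, and then sum the resulting series by means of Proposition \ref{Prop.trace}.

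First I would fix an orthonormal basis $(e_i)_{i\in\N}$ of $\sK$ and denote by $H(\phi)$ the self-adjoint operator on $\sL^2(\qspace)$ associated with the symmetric bilinear form $\frac{\delta^2 E}{\delta\phi^2}(\phi)$, so that $\la H(\phi)\psi,\rho\ra=\left(\frac{\delta^2 E}{\delta\phi^2}(\phi)\right)(\psi,\rho)$. Using the cyclic property of the trace together with self-adjointness, I would rewrite
\[
\Tr\left[\Cb\Cb^*\frac{\delta^2 E(\phi)}{\delta\phi^2}\right]
=\Tr\left[\Cb^* H(\phi)\Cb\right]
=\sum_{i\in\N}\left(\frac{\delta^2 E}{\delta\phi^2}(\phi)\right)(\Cb e_i,\Cb e_i).
\]
By Proposition \ref{Prop.trace} each $\Cb e_i$ belongs to $W^{2,2}(\qspace)$, so every summand is finite; since the second variation involves at most one Laplacian on each of its arguments, the map $\psi\mapsto\left(\frac{\delta^2 E}{\delta\phi^2}(\phi)\right)(\psi,\psi)$ extends continuously from $C_0^\infty(\qspace)$ to $W^{2,2}(\qspace)$, and Proposition \ref{Le.trace} therefore applies to $\psi=\Cb e_i$ by density.

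Next I would insert the pointwise estimate of Proposition \ref{Le.trace} with $\psi=\Cb e_i$,
\[
\left(\frac{\delta^2 E}{\delta\phi^2}(\phi)\right)(\Cb e_i,\Cb e_i)
\leq c\left(|\Delta\phi|_2^2+|\nabla\phi|_2^4+|\phi|_4^2+1\right)\left(|\Cb e_i|_2^2+|\nabla\Cb e_i|_2^2+|\Delta\Cb e_i|_2^2\right),
\]
and observe that the $\phi$-dependent factor does not depend on $i$. Summing over $i$ therefore factors it out and leaves the series $\sum_i\left(|\Cb e_i|_2^2+|\nabla\Cb e_i|_2^2+|\Delta\Cb e_i|_2^2\right)$, which is dominated by $\sum_i\|\Cb e_i\|_{W^{2,2}(\qspace)}^2$. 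By Proposition \ref{Prop.trace} this last sum is finite and bounded by $c\Tr[\Cb^*\Delta^2\Cb]$; the absolute convergence of this bounding series simultaneously justifies the rearrangement above and shows that $\Cb\Cb^*\frac{\delta^2 E(\phi)}{\delta\phi^2}$ is trace class. Collecting the constants then gives the asserted inequality.

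I expect the only genuinely delicate point to be this first reduction: justifying the trace identity and the summability, that is, checking that $\Cb e_i$ lies in a space on which the second variation is finite and that Proposition \ref{Le.trace} transfers from test functions to $\Cb e_i$. Once this bookkeeping is in place the proposition is essentially a corollary of Propositions \ref{Le.trace} and \ref{Prop.trace}, obtained by summing the already-established pointwise bound, and no further analysis of the nonlinearity is needed.
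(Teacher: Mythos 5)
Your proposal is correct and follows essentially the same route as the paper: expand the trace over an orthonormal basis, apply the quadratic-form bound of Proposition \ref{Le.trace} with $\psi=\Cb e_i$, and sum using the $\ell^2$ summability of $\|\Cb e_i\|_{W^{2,2}(\qspace)}$ from Proposition \ref{Prop.trace}. The extra care you take with the cyclicity of the trace and the density extension of the second variation to $W^{2,2}(\qspace)$ is bookkeeping the paper leaves implicit, but it does not change the argument.
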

\begin{Proof}
By Lemma \ref{Le.trace}, for any eigenvector $e_i$ we have
\[
\langle \Cb\dfrac{\delta^2 E (\phi)}{\delta \phi^2},\Cb e_i\rangle
\leq  c\left(|\Delta\phi|_2^2+|\nabla\phi|_2^4+|\phi|_4^2+1\right)
\left(|\Cb e_i|_2^2+|\nabla \Cb e_i|_{2}^2+|\Delta \Cb e_i|_{2}^2\right)
\]
By taking the sum over $i$ and using Proposition \ref{Prop.trace} we get the result.  
\end{Proof}

%

\section{Existence of a solution - preliminaries}

\subsection{Approximated equation and a priori estimates}

Before proceeding to the proof, we need an approximation of equation \eqref{eq.aNSPF}.

Let us choose $\{e_{j}\}_{j\in\N^{*}} \in \sH$ to be
the eigenfunctions of the Stokes operator with homogeneous boundary conditions, such that $\{e_{j}\}_{j\in\N^{*}}$ forms an orthonormal basis for
$\sH$. 
Let also $\{\eta_j\}_{j\in\N^{*}} \in \sL^2(\qspace)$ be the orthonormal basis in $\sL^2(\qspace)$  consisting of the eigenfunctions of the Laplacian $\Delta$ with homogeneous Dirichlet boundary conditions.

Next, set $S_n=\mathrm{span }\{e_1,\ldots,e_n\}$, $N_n=\mathrm{span }\{\eta_1,\ldots,\eta_n\}$.
Finally, we denote by $P_n : \sH\to \sH$ the orthogonal projection of $\sH$ to $S_n$, and by $\pi_n:\sL^2(\qspace)\to \sL^2(\qspace)$ the orthogonal projection of $\sL^2(\qspace)$ into $N_n$.

We consider the equations
\begin{equation}  \label{eq.approx}
\begin{cases}
\dd(w_n+\alpha^2 A w_n)=P_{n}\Ca\dd W(t) + \\
\left(-\nu A(w_n+\alpha^2 A w_n)- P_n\widetilde{B}_{}(w_n,w_n+\alpha^2 A w_n) +P_{n}\left(\pi_n\left(\dfrac{\delta E_{}(\phi_{n})}{\delta \phi}\right)\nabla \phi_n\right) \right)\ddt, & \text{in } [0,T]\times \qspace,\\
\dd\phi_n=\left(-\pi_n\left(w_n\cdot \nabla \phi_n\right)  -\gamma \pi_{n} \left(\dfrac{\delta E_{}(\phi_{n})}{\delta \phi_n}\right)\right)\ddt+\pi_{n}\Cb\dd W'(t), &\text{in } [0,T]\times \qspace, \\
 w_n(0)=P_nw_0, &  \text{in } \qspace, \\
 \phi_n(0)=\pi_n(\phi_0+1)-1,&  \text{in } \qspace. 
 \end{cases}
\end{equation}



Equation \eqref{eq.approx} is a system of ordinary stochastic differential equations with polynomial nonlinear coefficients. 
Therefore, there exists a unique local strong  solution $(w_n,\phi_n)$ defined up to a blow up random time $\tau(\omega)$.
In order to show global existence and uniqueness of a solution for the approximated equations, we shall show a priori estimates.

By applying formally (exact proof is in the next section) the Itô formula we find
\begin{eqnarray}
  d\left(|w_n|_2^2+\alpha^2 |\nabla w_n|_2^2\right) & = &  \Bigg( \nu(|\nabla w_n|_2^2 +\alpha^2|A  w_n|_2^2)+\la P_n\widetilde{B}_{}(w_n,w_n+\alpha^2 A w_n), w_n\ra \notag \\
                                                    &   &  +\la P_{n}\left(\pi_n\left(\dfrac{\delta E(\phi_{n})}{\delta \phi}\right)\nabla \phi_n\right),w_n\ra +  \frac12 \Tr[(P_n\Ca)^*(I+\alpha^2A)^{-1} (P_n\Ca)]\Bigg)\dd t \notag\\
                                                    &   &  + \la w_n, (P_n\Ca)dW(t) \ra \label{eq.itow_n}
\end{eqnarray}
and 
\begin{eqnarray}
  d E(\phi_n)   &=&   \Bigg( -\la  \pi_n(\nabla \phi_n \cdot w_n) ,   \dfrac{\delta E(\phi_{n})}{\delta \phi}  \ra -\gamma\left|\dfrac{\delta E(\phi_{n})}{\delta \phi}\right|_2^2 
                                          + \frac12\Tr\left[(\pi_n \Cb)^*(\pi_n \Cb)\dfrac{\delta^2 E_{}(\phi_{n})}{\delta \phi^2}\right]\Bigg) \dd t \notag   \\
                && + \la\dfrac{\delta E_{}(\phi_{n})}{\delta \phi},(\pi_n\Cb)\dd W'(t)\ra \label{eq.itophi_n}
\end{eqnarray}
Notice that by Proposition \ref{prop.1.1} and by the fact that  $w_n\in S_n$ we have 
\[ 
    \la P_n\widetilde{B}_{}(w_n,w_n+\alpha^2 A w_n), w_n\ra = \la  \widetilde{B}_{}(w_n,w_n+\alpha^2 A w_n), w_n\ra =0
\]
 and
  \[
     \la P_{n}\left(\pi_n\left(\dfrac{\delta E(\phi_{n})}{\delta \phi}\right)\nabla \phi_n\right),w_n\ra = \la  \pi_n\left(\dfrac{\delta E(\phi_{n})}{\delta \phi}\right),\nabla \phi_n \cdot w_n\ra  
         = \la \dfrac{\delta E(\phi_{n})}{\delta \phi}, \pi_n\left(\nabla \phi_n \cdot w_n\right)\ra
  \]
Then, by summing up \eqref{eq.itow_n} and \eqref{eq.itophi_n} we find 
\begin{equation}
 \begin{split}\label{eq.ito}
  & d\left(|w_n|_2^2+\alpha^2 |\nabla w_n|_2^2 + E(\phi_n)\right) =    \Bigg( \nu(|\nabla w_n|_2^2 +\alpha^2|A  w_n|_2^2)-\gamma\left|\dfrac{\delta E(\phi_{n})}{\delta \phi}\right|_2^2  \\
   & \quad + \frac12 \Tr[(P_n \Ca)^*(I+\alpha^2A)^{-1} (P_n \Ca)]  +  \frac12\Tr\left[(\pi_n \Cb)^*(\pi_n \Cb)\dfrac{\delta^2 E_{}(\phi_{n})}{\delta \phi^2}\right] \Bigg)\dd t   \\   
   & \quad   + \la w_n, (P_n \Ca)dW(t) \ra + \la\dfrac{\delta E_{}(\phi_{n})}{\delta \phi},(\pi_n \Cb)\dd W'(t)\ra    
 \end{split}
\end{equation}

\subsection{Existence and uniqueness for the approximated equation}

\begin{Th} \label{thm.moments}
 Let $(w_0,\phi_0)\in D(A)\times \sL^2(\qspace)$ and assume that Hypothesis \ref{hyp.trace} holds.
 Then, for any $n\in \N$, $T>0$ there exists a solution $(w_n,\phi_n)\in \sL^2([0,T];D(A))\times  \sL^2([0,T];\sL^2(\qspace))$ of problem \eqref{eq.approx}.
 Moreover,  for any $T>0$, $k\in \N^*$ there exists a constant $c=c(k,T,\phi_0,w_0)>0$ such that for any $n\in \N^*$
 \begin{eqnarray*}
  &&\sup_{0\leq t\leq T}\E\left[\left( |w_n(t)|_2^2 +\alpha^2 |\nabla w_n(t)|_2^2+E(\phi_n(t))\right)^k \right] \leq c\\
  &&\E\left[\int_0^{T} \left( |w_n|_2^2 +\alpha^2 |\nabla w_n|_2^2+E(\phi_n)\right)^{k-1} \left(\nu(|\nabla w_n|_2^2       +\alpha^2|A  w_n|_2^2 )
     +\gamma\left|\dfrac{\delta E(\phi_{n})}{\delta \phi}\right|_2^2\right) \dd s\right]\leq c
 \end{eqnarray*}
\end{Th}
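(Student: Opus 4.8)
The plan is to establish the two moment bounds in Theorem~\ref{thm.moments} via the It\^o formula~\eqref{eq.ito} applied to the scalar energy functional
\[
  \Phi_n(t) := |w_n(t)|_2^2 + \alpha^2 |\nabla w_n(t)|_2^2 + E(\phi_n(t)),
\]
followed by a Gronwall-type argument for its $k$-th power. Global existence of the finite-dimensional solution will follow once the a priori bounds are in hand, since they preclude the blow-up time $\tau(\omega)$ from being finite; so I would first derive the estimates on the time interval $[0,\tau\wedge T)$ and then let the localization exhaust $[0,T]$. First I would apply It\^o's formula to $\Phi_n^k$, producing a drift coming from $k\Phi_n^{k-1}\, d\Phi_n$ together with the quadratic-variation correction $\tfrac12 k(k-1)\Phi_n^{k-2}\, d\langle \Phi_n\rangle$. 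The crucial sign observation is that the genuinely dissipative contribution
\[
  -\nu\bigl(|\nabla w_n|_2^2 + \alpha^2 |A w_n|_2^2\bigr) - \gamma\Bigl|\tfrac{\delta E(\phi_n)}{\delta\phi}\Bigr|_2^2
\]
appears with a favorable sign in $d\Phi_n$ and must be retained to produce the second estimate in the statement; the nonlinear transport and bending-interaction terms cancel exactly by Proposition~\ref{prop.1.1} and the integration-by-parts identity already recorded before~\eqref{eq.ito}.

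The main work is then to control the two trace terms. The noise term on $w_n$ contributes $\tfrac12\Tr[(P_n\Ca)^*(I+\alpha^2 A)^{-1}(P_n\Ca)]$, which is bounded uniformly in $n$ by $\tfrac12\Tr[\Ca^*\Ca]<\infty$ using Hypothesis~\ref{hyp.trace} and the fact that $(I+\alpha^2 A)^{-1}$ is a contraction while $P_n$ is an orthogonal projection. The delicate term is
\[
  \tfrac12\Tr\!\left[(\pi_n\Cb)^*(\pi_n\Cb)\tfrac{\delta^2 E(\phi_n)}{\delta\phi^2}\right],
\]
and here I would invoke Proposition~\ref{prop.trace} directly: it bounds this trace by $c\bigl(|\Delta\phi_n|_2^2 + |\nabla\phi_n|_2^4 + |\phi_n|_4^2 + 1\bigr)\Tr[\Cb^*\Delta^2\Cb]$. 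Combining this with the coercivity estimate~\eqref{eq.ineq.E2}, which shows $|\Delta\phi_n|_2^2 + |\nabla\phi_n|_2^4 + |\phi_n|_4^2 \leq c(1+E(\phi_n)) \leq c(1+\Phi_n)$, I obtain a trace bound of the form $c(1+\Phi_n)$. Thus after taking expectations and using that the stochastic integrals are true martingales (justified on the localized interval by the finite-dimensionality), the drift of $\E[\Phi_n^k]$ is dominated by the dissipation (with a good sign) plus a term $\le c\,\E[\Phi_n^{k-1}(1+\Phi_n)] \le c\,\E[1+\Phi_n^k]$, to which Gronwall's lemma applies to yield the first estimate uniformly in $n$.

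The second estimate then follows by not discarding the dissipative term: integrating the It\^o identity for $\Phi_n^k$ in time and rearranging, the quantity
\[
  \E\!\left[\int_0^T \Phi_n^{k-1}\Bigl(\nu(|\nabla w_n|_2^2 + \alpha^2|A w_n|_2^2) + \gamma\bigl|\tfrac{\delta E(\phi_n)}{\delta\phi}\bigr|_2^2\Bigr)ds\right]
\]
is bounded by $\E[\Phi_n(0)^k]$ plus the already-controlled trace and Gronwall contributions, all of which are finite uniformly in $n$ by the first estimate and the assumption $(w_0,\phi_0)\in D(A)\times\sL^2(\qspace)$. I expect the principal obstacle to be the bookkeeping around the cross term in the quadratic variation $d\langle\Phi_n\rangle$: the martingale part of $d\Phi_n$ combines the $w_n$-noise and the $\phi_n$-noise, and the $\phi_n$-noise enters paired with $\tfrac{\delta E(\phi_n)}{\delta\phi}$, so the correction $\Phi_n^{k-2}\,d\langle\Phi_n\rangle$ will involve $\Phi_n^{k-2}\bigl|\Cb^*\tfrac{\delta E(\phi_n)}{\delta\phi}\bigr|_2^2$. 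Controlling this requires Proposition~\ref{prop.CEestimate}, and care is needed to absorb any resulting higher power of $\Phi_n$ back into the polynomial bound without overwhelming the Gronwall closure; this is where the precise exponents in~\eqref{eq.ineq.E2} and~\eqref{eq.CEestimate} must be matched, and it is the one step I would write out carefully rather than treat as routine.
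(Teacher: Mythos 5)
Your proposal is correct and follows essentially the same route as the paper's proof: the same energy functional, the same localization by a stopping time, It\^o's formula applied to the $k$-th power, the trace of the second variation controlled via Proposition \ref{prop.trace} combined with \eqref{eq.ineq.E2}, the quadratic-variation correction handled by Proposition \ref{prop.CEestimate}, and a Gronwall closure with the dissipative terms retained for the second bound. The step you flag as delicate (absorbing $\mathcal{F}^{k-2}\bigl|\Cb^*\tfrac{\delta E(\phi_n)}{\delta\phi}\bigr|_2^2$ into $c(1+\mathcal{F}^k)$) is exactly the one the paper writes out, and it closes as you anticipate.
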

%
%
\begin{proof}
Set 
\begin{equation}  \label{eq.defF}
   \mathcal{F}(t)=\mathcal{F}(t,w_n,\phi_n)=  |w_n|_2^2 +\alpha^2 |\nabla w_n|_2^2+E(\phi_n) 
\end{equation}
For any $N>0$, $n\in \N^*$ we consider the stopping time
\[
   \tau_N^n=\inf\{t:\mathcal{F}(t,w_n,\phi_n)>N\}.
\]
As pointed out previously, \eqref{eq.approx} is a system of ordinary differential equations with polynomial nonlinearities.
Then, there exists a local solution $(w_n,\phi_n)$ up to a blow up time $\tau(w)$. 
Since the functions $w_n(t\wedge \tau_N^n), \phi_n(t\wedge\tau_N^n )$ are bounded by $N$, we can apply the  Itô formula in \eqref{eq.ito} to obtain 
\begin{eqnarray*}
  \mathcal{F}^k(t \wedge \tau_N^n) &+& 2k\int_0^{t \wedge \tau_N^n}\mathcal{F}^{k-1} \times  \left(\nu(|\nabla w_n|_2^2       +\alpha^2|A  w_n|_2^2 )
     +\gamma\left|\dfrac{\delta E(\phi_{n})}{\delta \phi}\right|_2^2\right) \dd s \\
     &=&  2k\int_0^{t \wedge \tau_N^n}\mathcal{F}^{k-1} \times\left( \frac12 \Tr[(P_n \Ca)^*(I+\alpha^2A)^{-1}(P_n \Ca)]
              + \frac12\Tr\left[(\pi_n \Cb)^*(\pi_n \Cb)\dfrac{\delta^2 E_{}(\phi_{n})}{\delta \phi^2}\right]   \right)\dd s\\
   &&  + 2k\int_0^{t \wedge \tau_N^n}\mathcal{F}^{k-1}\la w_n,(P_n \Ca)\dd W(s)\ra
       +2k\int_0^{t \wedge \tau_N^n} \mathcal{F}^{k-1}\la\dfrac{\delta E_{}(\phi_{n})}{\delta \phi},(\pi_n \Cb)\dd W'(s)\ra    \\
    && + k(k-1)  
        \int_0^{t \wedge \tau_N^n}\mathcal{F}^{k-2}  \left( |(P_n \Ca)^*w_n|_2^2 +\left|(\pi_n \Cb^*)\dfrac{\delta E(\phi_{n})}{\delta \phi} \right|_2^2 \right)\dd s\\
     &=& I_1+I_2+M_t
\end{eqnarray*}
where $M_t$ is the martingale term.
Let us estimate $I_1$.  
By  Proposition \ref{prop.trace} there exists $c_1>0$ such that
\[
   \frac12\Tr\left[(\pi_n \Cb)^*(\pi_n \Cb)\dfrac{\delta^2 E_{}(\phi_{n})}{\delta \phi^2}\right]
    \leq  c_1\left(|\Delta\phi_n|_2^2+|\nabla\phi_n|_2^4+|\phi_n|_4^2+1\right) \Tr[\Cb^{*}\Delta^{2} \Cb].  
\]
By \eqref{eq.ineq.E2} and elementary inequalities there exists a positive constant $c_2$ such that 
\[
   \left(|\Delta\phi_n|_2^2+|\nabla\phi_n|_2^4+|\phi_n|_4^2+1\right)\leq c_2(1+E(\phi_n))\leq c_2(1+\mathcal{F}).
\]
   Taking into account that $\Tr[\Cb^{*}\Delta^{2} \Cb]$ and $\Tr[\Ca^*(I+\alpha^2A)^{-1} \Ca]  $ are bounded, there exists  $c_3>0$ that
\[
 I_1\leq c_3\int_0^{t \wedge \tau_N^n}(\mathcal{F}^{k-1}+\mathcal{F}^k)\dds
\]
Let us estimate $I_2$. 
By \eqref{eq.CEestimate} there exists $c_4>0$ such that 
\[
    \left|(\pi_n \Cb)^*\dfrac{\delta E(\phi_{n})}{\delta \phi} \right|_2^2\leq c_4\left(1+|\phi_n|_4^8+|\phi_n|_6^3+|\nabla\phi_n|_2^4+|\Delta \phi_n|_2^2\right)^2. 
\]
Using \eqref{eq.ineq.E2}, the quantity on the right hand side is bounded by $c(1+E(\phi_n)^2)$, for a suitable $c>0$ independent by $\phi$. 
By elementary inequalities and the fact that the operators $P_n \Ca^*$ are uniformly bounded with respect to $n$, 
we deduce that for there exists $c_5>0$, independent by $n$, $\phi_n$, $w_n$ such that
\[
 I_2 \leq c_5\int_0^{t\wedge \tau_N^n}\left(\mathcal{F}^{k-2}+\mathcal{F}^{k}\right)\dd s.
\]
Finally,
\begin{equation} \label{eq.boundsF}
 \begin{split}
    \mathcal{F}^k(t \wedge \tau_N^n)  \leq&  c_3\int_0^{t \wedge \tau_N^n}(\mathcal{F}^{k-1}+\mathcal{F}^k)\dds
                                +c_5\int_0^{t\wedge \tau_N^n}\left(\mathcal{F}^{k-2}+\mathcal{F}^{k}\right)\dd s\\
          &                      +2k\int_0^{t \wedge \tau_N^n}\mathcal{F}^{k-1}\la w_n,(P_n \Ca)\dd W(s)\ra
       +2k\int_0^{t \wedge \tau_N^n} \mathcal{F}^{k-1}\la\dfrac{\delta E_{}(\phi_{n})}{\delta \phi},(\pi_n \Cb)\dd W'(s)\ra
 \end{split}
\end{equation}
Befor  taking expectation, we need to verify that the martingale terms are integrable.
Notice that since the operator $\Ca$ is bounded there exists $c_6>0$ such that
\[
 \mathcal{F}^{k-1}|(P_n \Ca)^* w_n|_2\leq c_6(1+\mathcal{F}^{k}).
\]
Then, since $\mathcal{F}^k(t\wedge \tau_n)\leq N^k$, we can take expectation  to obtain
\[ 
    2k\E\int_0^{t \wedge \tau_N^n}\mathcal{F}^{k-1}\la w_n,(P_n \Ca)\dd W(s)\ra=0.
\]
Similarly, for the second term we can use estimate \eqref{eq.CEestimate} and obtain, for some $c_6>0$
\[
    \mathcal{F}^{k-1}\left|(\pi_n \Cb)^*\dfrac{\delta E_{}(\phi_{n})}{\delta \phi}\right|_2
     \leq c_6\mathcal{F}^{k-1}\left(1+|\phi_n|_4^8+|\phi_n|_6^3+|\nabla\phi_n|_2^4+|\Delta \phi_n|_2^2\right)
\]
As we pointed out previously, by \eqref{eq.ineq.E2}  there exists $c>0$ such that
\[
  \left(1+|\phi|_4^8+|\phi_n|_6^3+|\nabla\phi_n|_2^4+|\Delta \phi_n|_2^2\right)\leq c(1+E(\phi_n))\leq c(1+\mathcal{F}).
\]
Then, there exists $c_7>0$ such that
\[
   \mathcal{F}^{k-1}\left|(\pi_n \Cb)^*\dfrac{\delta E_{}(\phi_{n})}{\delta \phi}\right|_2\leq c_7\mathcal{F}^{k-1}(1+\mathcal{F}).
\]
This implies that we can take expectation to obtain
\[ 
  2k\E \int_0^{t \wedge \tau_N^n} \mathcal{F}^{k-1}\la\dfrac{\delta E_{}(\phi_{n})}{\delta \phi},(\pi_n \Cb)\dd W'(s)\ra   =0.
\]
Finally, by taking expectation in \eqref{eq.boundsF} we get 
\begin{multline} \label{eq.boundsF2}
  \E[\mathcal{F}^k(t \wedge \tau_N^n)] +2k\E\left[\int_0^{t \wedge \tau_N^n}\mathcal{F}^{k-1} \times  \left(\nu(|\nabla w_n|_2^2       +\alpha^2|A  w_n|_2^2 )
     +\gamma\left|\dfrac{\delta E(\phi_{n})}{\delta \phi}\right|_2^2\right) \dd s \right] \\
   \leq  c_3\E\left[\int_0^{t \wedge \tau_N^n}(\mathcal{F}^{k-1}+\mathcal{F}^k)\dds\right]
                                +c_5\E\left[\int_0^{t\wedge \tau_N^n}\left(\mathcal{F}^{k-2}+\mathcal{F}^{k}\right)\dd s\right].
\end{multline}
Clearly, there exists a constant $c>0$ such that $\mathcal{F}^{k-1}\leq c(1+\mathcal{F}^k)$ and $\mathcal{F}^{k-2}\leq c(1+\mathcal{F}^{k})$.
Then, there exists $c_7>0$, depending only on $k$,   $\phi_0$, $w_0$, such that the right-hand side of \eqref{eq.boundsF2} is bounded by
\[
  c_7\E\left[\int_0^{t \wedge \tau_N^n}(1+\mathcal{F}^k)\dds\right]\leq c_7\E\left[\int_0^{t }(1+\mathcal{F}^k(s\wedge \tau_N^n))\dds\right].
\]
Using Gronwall lemma, we find that there exists a constant $c_8>0$ depending on $k$, $T$,   $\phi_0$, $w_0$, such that
\[
  \sup_{t\in [0,T]}E[\mathcal{F}^k(t \wedge \tau_N^n)]+2k\int_0^{T \wedge \tau_N^n}\mathcal{F}^{k-1} \times  \left(\nu(|\nabla w_n|_2^2       +\alpha^2|A  w_n|_2^2 )
     +\gamma\left|\dfrac{\delta E(\phi_{n})}{\delta \phi}\right|_2^2\right) \dd s \leq c.
\]
Letting $N\to\infty$ we conclude the proof.
\end{proof}
\begin{Th}   \label{thm.montentsup}
 Let $(w_0,\phi_0)\in D(A)\times \sL^2(\qspace)$ and assume that Hypothesis \ref{hyp.trace} holds.
 Then for any $T>0$, $k\in \N$ there exists $c=c(k,T,w_0,\phi_0)>0$ such that
 \[
  \E\left[\sup_{t\in[0,T]}\left( |w_n|_2^2 +\alpha^2 |\nabla w_n|_2^2+E(\phi_n)\right)^k \right] \leq c.
 \]
\end{Th}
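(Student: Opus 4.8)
The plan is to upgrade the pointwise-in-time moment bound of Theorem \ref{thm.moments} to a bound on the expected supremum by returning to the Itô expansion \eqref{eq.boundsF}, taking the supremum in time \emph{before} the expectation, and controlling the resulting stochastic integrals by the Burkholder--Davis--Gundy (BDG) inequality together with an absorption argument. First I would fix $N$ and work up to the stopping time $\tau_N^n$, where $\mathcal{F}\leq N$ guarantees that every quantity in sight is integrable. Taking $\sup_{t\in[0,T]}$ on both sides of \eqref{eq.boundsF} and then $\E$, the two Lebesgue integrals on the right are nondecreasing in their upper limit, so their suprema are attained at $t=T$; after expectation they are controlled exactly as in the proof of Theorem \ref{thm.moments}, being dominated by $c\,\E[\int_0^{T\wedge\tau_N^n}(1+\mathcal{F}^k)\dds]$, which is finite and bounded uniformly in $N$ and $n$.

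Next I would treat the martingale term $M_t$. By BDG there is a constant $C>0$ with
\[
\E\Big[\sup_{t\leq T}|M_{t\wedge\tau_N^n}|\Big]\leq C\,\E\Big[\Big(\int_0^{T\wedge\tau_N^n}\mathcal{F}^{2(k-1)}\big(|(P_n\Ca)^*w_n|_2^2+|(\pi_n\Cb)^*\tfrac{\delta E(\phi_n)}{\delta\phi}|_2^2\big)\dds\Big)^{1/2}\Big].
\]
Using the boundedness of $\Ca$ together with $|w_n|_2^2\leq\mathcal{F}$, and using \eqref{eq.CEestimate} with \eqref{eq.ineq.E2} to obtain $|(\pi_n\Cb)^*\tfrac{\delta E(\phi_n)}{\delta\phi}|_2^2\leq c(1+E(\phi_n)^2)\leq c(1+\mathcal{F}^2)$, the integrand is bounded by a linear combination of $\mathcal{F}^{2k},\mathcal{F}^{2k-1},\mathcal{F}^{2k-2}$, hence by $c(1+\mathcal{F}^{2k})$.

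The crux is the absorption step. I would factor each troublesome power against a lower one, for instance
\[
\int_0^{T\wedge\tau_N^n}\mathcal{F}^{2k}\dds\leq \big(\sup_{s\leq T}\mathcal{F}^k(s\wedge\tau_N^n)\big)\int_0^{T\wedge\tau_N^n}\mathcal{F}^k\dds,
\]
and analogously $\mathcal{F}^{2k-1},\mathcal{F}^{2k-2}$ against $\int\mathcal{F}^{k-1}\dds$, so that after the square root and Young's inequality $ab\leq \varepsilon a^2+\tfrac{1}{4\varepsilon}b^2$ one arrives at
\[
\E\Big[\sup_{t\leq T}|M_{t\wedge\tau_N^n}|\Big]\leq \tfrac12\,\E\Big[\sup_{s\leq T}\mathcal{F}^k(s\wedge\tau_N^n)\Big]+c\,\E\Big[\int_0^{T\wedge\tau_N^n}(1+\mathcal{F}^k)\dds\Big].
\]
Because $\mathcal{F}(t\wedge\tau_N^n)\leq N$, the supremum appearing on the left of the assembled inequality is \emph{a priori} finite, so the term $\tfrac12\,\E[\sup\mathcal{F}^k]$ may legitimately be moved to the left-hand side and absorbed. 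What remains on the right is, once more, controlled uniformly in $N$ and $n$ by Theorem \ref{thm.moments}.

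Finally I would let $N\to\infty$: the stopping times $\tau_N^n$ increase to the global existence time (guaranteed by Theorem \ref{thm.moments}), and by Fatou's lemma (monotone convergence on the left) the bound passes to $\E[\sup_{t\in[0,T]}\mathcal{F}^k(t)]\leq c$ with $c$ independent of $n$. The main obstacle is precisely this martingale estimate: one must bound the quadratic variation by a power of $\mathcal{F}$ that, after BDG and the square root, splits via Young's inequality into an absorbable supremum term and a time-integral already controlled by the previous theorem. The careful bookkeeping of the powers $2k,2k-1,2k-2$ and the use of the stopping time $\tau_N^n$ to ensure the finiteness of $\E[\sup\mathcal{F}^k]$ before absorption are exactly what make the argument rigorous.
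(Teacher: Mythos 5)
Your proof is correct, and its skeleton (Itô formula for $\mathcal{F}^k$, suprema of the drift integrals attained at $t=T$, Burkholder--Davis--Gundy for the martingale part) matches the paper's. Where you genuinely diverge is in how the BDG right-hand side is closed. The paper simply observes that the quadratic variation is bounded by $c\int_0^T(1+\mathcal{F}^{2k})\,\dd s$ and that $\E\bigl[\bigl(\int_0^T\mathcal{F}^{2k}\,\dd s\bigr)^{1/2}\bigr]$ is already finite and uniformly bounded by Theorem \ref{thm.moments} applied with exponent $2k$ -- a one-line conclusion, available precisely because that theorem holds for \emph{every} $k\in\N^*$. You instead use the classical stopping--splitting--absorption scheme: factor $\mathcal{F}^{2k}\leq(\sup\mathcal{F}^k)\int\mathcal{F}^k$, apply Young's inequality after the square root, and absorb $\tfrac12\,\E[\sup\mathcal{F}^k(\cdot\wedge\tau_N^n)]$ into the left-hand side, which is legitimate since the stopped supremum is bounded by $N^k$; then let $N\to\infty$ by monotone convergence. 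Your route is slightly longer but more economical in hypotheses -- it only consumes the $k$-th moment bound from Theorem \ref{thm.moments}, not the $2k$-th, so it would survive in situations where moments of all orders are not available; the paper's route is shorter here exactly because all moments have already been established. Both arguments are rigorous, and your care in stopping before absorbing (so that the quantity being subtracted is a priori finite) is exactly the point that makes the absorption step valid.
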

\begin{proof}
As done for the previous Theorem, let us set $\mathcal{F}$ as in \eqref{eq.defF}.
By Theorem \ref{thm.moments} the solution $(w_n,\phi_N)$ is global and all moments of $\mathcal{F}$ have finite expectation.
Then by  Itô formula \eqref{eq.ito} we get 
\begin{eqnarray*}
  \mathcal{F}^k(t) &=& 2k\int_0^{t}\mathcal{F}^{k-1} \times \left( \left(-\nu(|\nabla w_n|_2^2       +\alpha^2|A  w_n|_2^2 )
     -\gamma\left|\dfrac{\delta E(\phi_{n})}{\delta \phi}\right|_2^2\right.\right. \\ 
    && +  \left.\left.  \frac12\Tr[(P_n \Ca)^*(I+\alpha^2A)^{-1} (P_n \Ca)] 
      + \frac12\Tr\left[(\pi_n \Cb)^*(\pi_n \Cb)\dfrac{\delta^2 E_{}(\phi_{n})}{\delta \phi^2}\right]   \right)\dd s \right)\\
      && + k(k-1)  
        \int_0^{t}\mathcal{F}^{k-2}  \left( |(P_n \Ca)^*w_n|_2^2 +\left|(\pi_n \Cb^*)\dfrac{\delta E(\phi_{n})}{\delta \phi} \right|_2^2 \right)\dd s\\
   &&  + 2k\int_0^{t}\mathcal{F}^{k-1}\la w_n,(P_n \Ca)\dd W(s)\ra
       +2k\int_0^{t} \mathcal{F}^{k-1}\la\dfrac{\delta E_{}(\phi_{n})}{\delta \phi},(\pi_n \Cb)\dd W'(s)\ra    \\
     &=& I_1+I_2+M_t
\end{eqnarray*}
Where $I_1$, $I_2$ are the integrals containing $\mathcal{F}^{k-1}$ and  $\mathcal{F}^{k-1}$ respectively, and $M_t$ is the martingale term.
As we done for Theorem \ref{thm.moments}, $I_1$, $I_2$ are uniformly bounded in $t$ by
\[
  I_1+I_2\leq c\int_0^{T}(1+\mathcal{F}^k)\dd s,
\]
where $c>0$ is a suitable constant depending only by $k,T$.
For the martingale part, we can use Burkholder-Davis-Gundy inequality  to get  for some constant $c_1,c_2>0$
\begin{eqnarray*}
  \E\left(\sup_{t\in[0,T]}\left|\int_0^{t}\mathcal{F}^{k-1}\la w_n,(P_n \Ca)\dd W(s)\ra\right|\right) 
  &\leq& c_1\E\left(\int_0^{T} \mathcal{F}^{2(k-1)}\left|(P_n \Ca)^*w_n\right|_2^2\dd s \right)^\frac{1}{2} \\
  &\leq&  c_2\E\left(\int_0^{T} \mathcal{F}^{2k}\dd s \right)^\frac{1}{2} <\infty
\end{eqnarray*}
The last term is bounded thanks to Theorem \ref{thm.moments}.
Again, by Burkholder-Davis-Gundy inequality  there exists $c_3>0$ such that

\begin{eqnarray*}
   \E \left(\sup_{t\in[0,T]}\left|\int_0^{t} \mathcal{F}^{k-1}\la\dfrac{\delta E_{}(\phi_{n})}{\delta \phi},(\pi_n \Cb)\dd W'(s)\ra\right| \right) 
  \leq c_3\E\left(\int_0^{T} \mathcal{F}^{2(k-1)}\left|(\pi_n \Cb)^*\dfrac{\delta E_{}(\phi_{n})}{\delta \phi}\right|_2^2\dd s \right)^\frac{1}{2}
\end{eqnarray*}
By estimate \eqref{eq.CEestimate} and \eqref{eq.ineq.E2}, there exists $c_4>0$ such that the right-hand side is bounded by
\begin{equation*}
  c_4\E\left(\int_0^{T} \mathcal{F}^{2(k-1)}(1+\mathcal{F}^2)\dd s \right)^\frac{1}{2}.
\end{equation*}
Then, by \ref{thm.moments} this integral is finite. 
This complete the proof.
\end{proof}

\subsection{Compactness argument - convergence to a solution }
 Let $X$ be a Banach space with norm $\|\cdot\|_X$.
 For $p\geq1$, $\theta\in]0,1[$ we denote by $W^{\alpha,p}([0,T];X)$ classical Sobolev space of all functions $f\in \sL^p([0,T];X)$ such that
 \[
   \int_0^T\int_0^T\frac{|f(t)-f(s)|_2^p}{|t-s|^{1+\theta p}}\dd s\dd t <\infty,
 \]
 endowed with the norm
 \[
   \|f\|_{W^{\theta,p}([0,T];X)}=\left(\|f\|_{L^p([0,T];X)}^p+\int_0^T\int_0^T\frac{\|f(t)-f(s)\|_X^p}{|t-s|^{1+\theta p}}\dd s\dd t  \right)^{\frac1p}.
 \]
The proof of the following lemma is left to the reader
\begin{Le} \label{Le.sobolev}
Let $X$ a  Banach space.
For any $\theta\in]0,1/2[$ $p\geq1$ there exists $c=c(\theta,p)$ such that for any $f\in \sL^2([0,T];X)$ it holds 
 \[
   \left\|\int_0^\cdot f(\tau)\dd \tau \right\|_{W^{\theta,p}([0,T];X)}\leq c(\theta,p) \|f\|_{L^2([0,T];X)}
 \]
\end{Le}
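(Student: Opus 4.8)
The plan is to set $F(t)=\int_0^t f(\tau)\dd\tau$ and to estimate separately the two contributions to the $W^{\theta,p}$-norm: the $\sL^p([0,T];X)$ term and the Gagliardo--Slobodeckij seminorm. Both estimates rest on one elementary observation obtained from the Bochner triangle inequality followed by Cauchy--Schwarz on the scalar function $\tau\mapsto\|f(\tau)\|_X$: for $0\le s\le t\le T$,
\[
  \|F(t)-F(s)\|_X = \left\|\int_s^t f(\tau)\dd\tau\right\|_X \le \int_s^t \|f(\tau)\|_X\,\dd\tau \le |t-s|^{1/2}\,\|f\|_{\sL^2([0,T];X)}.
\]
Note that only the triangle inequality for the Bochner integral is used here, so the argument is valid for a general Banach space $X$.

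First I would handle the $\sL^p$ part. Taking $s=0$ above gives the pointwise bound $\|F(t)\|_X\le T^{1/2}\|f\|_{\sL^2([0,T];X)}$, and integrating the $p$-th power over $[0,T]$ yields $\|F\|_{\sL^p([0,T];X)}^p \le T^{1+p/2}\|f\|_{\sL^2([0,T];X)}^p$, which is already of the required form. Next I would bound the seminorm: inserting the displayed estimate into the integrand gives
\[
  \frac{\|F(t)-F(s)\|_X^p}{|t-s|^{1+\theta p}} \le |t-s|^{\,p(1/2-\theta)-1}\;\|f\|_{\sL^2([0,T];X)}^p ,
\]
so that the whole seminorm is controlled by $\|f\|_{\sL^2([0,T];X)}^p$ times the purely deterministic double integral $\int_0^T\int_0^T |t-s|^{\,p(1/2-\theta)-1}\dds\ddt$.

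The crucial point --- and really the only thing that must be checked --- is that this last integral is finite. Its exponent satisfies $p(1/2-\theta)-1>-1$ precisely because $\theta<1/2$; this is the sole role of the hypothesis $\theta<1/2$. Since a power $|t-s|^{\beta}$ with $\beta>-1$ is integrable against Lebesgue measure across the diagonal $s=t$ (for fixed $t$ one splits $\int_0^t(t-s)^\beta\dds+\int_t^T(s-t)^\beta\dds$, each finite because $\beta+1>0$), the double integral converges to a finite constant $C(\theta,p,T)$.

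Combining the two estimates gives
\[
  \|F\|_{W^{\theta,p}([0,T];X)}^p\le \bigl(T^{1+p/2}+C(\theta,p,T)\bigr)\,\|f\|_{\sL^2([0,T];X)}^p,
\]
which is the claim with $c(\theta,p)$ the $p$-th root of the bracket. I expect no genuine obstacle: the argument is entirely elementary. The one subtlety worth flagging is the integrability threshold $\theta<1/2$, which is exactly what makes the singular kernel $|t-s|^{-(1+\theta p)}$ tame enough to absorb the Hölder gain $|t-s|^{p/2}$ coming from the time integral of $f$; for $\theta\ge 1/2$ the estimate fails, consistent with the sharp $\tfrac12$-regularity of an $\sL^2$-in-time primitive.
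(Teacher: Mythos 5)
Your proof is correct and is precisely the standard argument the paper has in mind (the paper leaves this lemma to the reader): Cauchy--Schwarz gives $\|F(t)-F(s)\|_X\le|t-s|^{1/2}\|f\|_{\sL^2([0,T];X)}$, and the condition $\theta<1/2$ makes the kernel $|t-s|^{p(1/2-\theta)-1}$ integrable over $[0,T]^2$. The only cosmetic remark is that your constant also depends on $T$, which is harmless since $T$ is fixed throughout and the lemma is only ever invoked for fixed $T$.
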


\begin{Prop}   \label{prop.holderw}
  For any $T>0$, $\theta \in ]0,1/2[$, $p\geq1$ there exists $c=c(T,\theta,p)>0$ such that for any $n\in \N$ 
\[
   \E\left[ \|w_n+\alpha^2Aw_n\|_{W^{\theta,p}([0,T];D(A)')}^2\right]\leq c.
\]
\end{Prop}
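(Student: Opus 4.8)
The plan is to bound the fractional Sobolev norm in time of the process $M_n := w_n + \alpha^2 A w_n$, valued in $D(A)'$, by decomposing the integral form of the approximated equation \eqref{eq.approx} into its drift part and its stochastic part, and estimating each separately. Writing
\[
 M_n(t) = M_n(0) + \int_0^t \Phi_n(s)\, \dd s + \int_0^t P_n\Ca\, \dd W(s),
\]
where $\Phi_n = -\nu A(w_n+\alpha^2 Aw_n) - P_n\widetilde{B}(w_n,w_n+\alpha^2 Aw_n) + P_n(\pi_n(\tfrac{\delta E(\phi_n)}{\delta\phi})\nabla\phi_n)$, I would treat the deterministic drift via Lemma \ref{Le.sobolev} and the stochastic convolution by a direct moment computation of the Gagliardo seminorm.

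**Drift term.** For the drift I would apply Lemma \ref{Le.sobolev} with $X = D(A)'$, which reduces the $W^{\theta,p}$-bound to an $\sL^2([0,T];D(A)')$-bound on $\Phi_n$. Each summand must be controlled in $D(A)'$ uniformly in $n$. The linear term $\nu A M_n$ is bounded since $\|A(w_n+\alpha^2 Aw_n)\|_{D(A)'} \le c\,|w_n+\alpha^2 Aw_n|_2 \le c(|w_n|_2 + \alpha^2|Aw_n|_2)$, whose time-integral of the square is controlled by Theorem \ref{thm.moments} (the second estimate there, with $k=1$). For the bilinear term I would use Proposition \ref{prop.1.1}(ii), which gives $\|\widetilde{B}(w_n,w_n+\alpha^2 Aw_n)\|_{D(A)'} \le c\,\|w_n\|_\sV \|w_n+\alpha^2 Aw_n\|_\sH$; since $\|w_n\|_\sV \le c\,\|w_n\|_{D(A)}$ and $P_n$ is a contraction on $\sH$, the resulting $\sL^2$-in-time norm is again controlled by the a priori estimates. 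The interaction term in $D(A)'$ pairs against $\xi\in D(A)$ as $\la \tfrac{\delta E(\phi_n)}{\delta\phi}, \nabla\phi_n\cdot\xi\ra$, so it is bounded using $|\tfrac{\delta E(\phi_n)}{\delta\phi}|_2$ together with the embeddings controlling $\nabla\phi_n$; its squared time-integral is finite by Theorem \ref{thm.moments} combined with the energy bounds \eqref{eq.ineq.E2}. Taking expectations and invoking Theorem \ref{thm.montentsup} then closes the drift estimate uniformly in $n$.

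**Stochastic term.** For the stochastic convolution $I_n(t) = \int_0^t P_n\Ca\, \dd W(s)$ I would estimate the expected Gagliardo seminorm directly. For $s<t$ the increment $I_n(t)-I_n(s) = \int_s^t P_n\Ca\,\dd W$ is a centered Gaussian in $D(A)'$, so by the It\^o isometry (in the $D(A)'$ norm, which is weaker than $\sH$, hence the covariance is trace class by Hypothesis \ref{hyp.trace})
\[
 \E\!\left[\|I_n(t)-I_n(s)\|_{D(A)'}^2\right] \le c\,(t-s)\,\Tr[\Ca^*\Ca],
\]
and by Gaussianity the $p$-th moment scales like $(t-s)^{p/2}$. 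Plugging this into the Gagliardo integral gives
\[
 \E\!\left[\int_0^T\!\!\int_0^T \frac{\|I_n(t)-I_n(s)\|_{D(A)'}^p}{|t-s|^{1+\theta p}}\,\dd s\,\dd t\right]
 \le c\int_0^T\!\!\int_0^T |t-s|^{p/2-1-\theta p}\,\dd s\,\dd t,
\]
which is finite precisely when $\theta < 1/2$, the stated range. Combining the drift and stochastic bounds via the triangle inequality in $W^{\theta,p}$ yields the claim.

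**Main obstacle.** I expect the main difficulty to be the interaction term $P_n(\pi_n(\tfrac{\delta E(\phi_n)}{\delta\phi})\nabla\phi_n)$ in the drift: one must verify that it lies in $D(A)'$ with a norm controlled by quantities appearing in the a priori estimates, which requires carefully distributing regularity between the chemical potential $\tfrac{\delta E(\phi_n)}{\delta\phi}$ (controlled in $\sL^2$) and the gradient $\nabla\phi_n$ when testing against $\xi\in D(A)\hookrightarrow \sL^\infty$. A secondary technical point is justifying the It\^o isometry in the weaker $D(A)'$ topology and the sharpness of the exponent constraint $\theta<1/2$; here the integrability in the Gagliardo double integral is exactly what forces the restriction $\theta \in\,]0,1/2[$, matching the hypothesis.
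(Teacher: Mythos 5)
Your proposal is correct and follows essentially the same route as the paper: decompose $w_n+\alpha^2Aw_n$ via the integrated equation, reduce the $W^{\theta,p}$-bound on the drift to an $\sL^2$-in-time bound through Lemma \ref{Le.sobolev} combined with Proposition \ref{prop.1.1}(ii) and the a priori estimates of Theorem \ref{thm.moments}, and handle the stochastic integral through the Gaussian scaling $\E\left[\|I_n(t)-I_n(s)\|^p\right]\lesssim (\Tr[\Ca^*\Ca])^{p/2}|t-s|^{p/2}$ in the Gagliardo seminorm, which is exactly where the restriction $\theta<1/2$ enters. The only cosmetic difference is that the paper tests against a fixed $\xi\in D(A)$ and distributes the interaction term as $|\frac{\delta E(\phi_n)}{\delta\phi}|_2\,|\nabla\phi_n|_3\,|\xi|_6$ rather than your $\sL^2\times\sL^2\times\sL^\infty$ splitting; both are valid in dimensions $2$ and $3$.
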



\begin{proof}
For any $n\in \N $, $\xi\in D(A)$ we have
 \begin{eqnarray*}
  \la w_n(t)+\alpha^2Aw_n(t),\xi \ra_{(D(A)',D(A))}  &=&   
  -\nu\int_0^t    \la w_n(\tau)+\alpha^2 A w_n(\tau),A\xi\ra\dd \tau\\
   && - \int_0^t\la P_n\widetilde{B}_{}(w_n  ,w_n +\alpha^2 A w_n)(\tau),\xi\ra \dd\tau \\
   && +\int_0^t \la P_{n}\left(\dfrac{\delta E_{{}}(\phi_{n})}{\delta \phi}\nabla \phi_n\right),\xi \ra  \dd t\\
   && +\la (P_n \Ca) W(t),\xi\ra \\
   &=&  J_1(t)+J_2(t)+J_3(t)+J_4(t).
 \end{eqnarray*}
We proceed as for Proposition \ref{prop.holderw} by estimating each term.
For $J_1$ we have, using Lemma \ref{Le.sobolev} and Theorem \ref{thm.moments} (with $k=1$), that there exists $c_1>0$ such that
\[
 \E\left[ \| J_1(\cdot)\|_{W^{\theta,p}([0,T];\R)}^2\right]
   \leq c(\theta,p)\E\left[\int_0^T\left(|w_n(\tau)|_2+\alpha^2|Aw_n(\tau)|_2^2\right)\dd \tau\right]|A\xi|_2^2
     \leq c_1|\xi|_{D(A)}^2
\]
In order to estimate $J_2$, observe that by {\em (iv)} of Proposition \ref{prop.1.1} and Young inequality, we have 
\begin{eqnarray*}
  \la  P_n\widetilde{B} (w_n ,w_n +\alpha^2 A w_n),\xi \ra_{(D(A)',D(A))}\dd   &=&    \la \widetilde{B} (w_n ,w_n +\alpha^2 A w_n),P_n\xi \ra_{(D(A)',D(A))} \\
   &\leq&    c  |w_n|_V\left(|w_n|_2 +\alpha^2 |A w_n|_2\right) |\xi|_{D(A)}  
\end{eqnarray*}
By Lemma \ref{Le.sobolev} and the bound given by Theorem \ref{thm.moments}, we deduce that there exists $c_2>0$ such that 
\begin{eqnarray*}
   \E\left[ \| J_2(\cdot)\|_{W^{\theta,p}([0,T];\R)}^2\right]  \leq 
          c \E\left[\int_0^T|w_n|_V^2\left(|w_n|_2 +\alpha^2 |A w_n|_2\right)^2\dd\tau \right] |\xi|_{D(A)}^2 
          \leq c_2|\xi|_{D(A)}^2.
\end{eqnarray*}
In order to estimate  $J_3$, let us obverse that  we have, by Hölder and Sobolev inequalities  ( which works both in dimensions $2$ and $3$ )
\begin{eqnarray*}
 \left| \la P_{n}\left(\dfrac{\delta E_{}(\phi_{n})}{\delta \phi}\nabla \phi_n\right),\xi \ra_{(D(A)',D(A))}\right|
     &\leq& \left|P_{n}\left(\dfrac{\delta E_{}(\phi_{n})}{\delta \phi}\right)\right|_2  \left|\nabla \phi_n\right|_3   |\xi|_6
\\
  &\leq& \left| \dfrac{\delta E_{}(\phi_{n})}{\delta \phi} \right|_2  \left\|\nabla \phi_n\right\|_{\sH^1(\qspace)}   \|\xi\|_{\sH^1(\qspace)}\\
  &\leq&  c\left| \dfrac{\delta E_{}(\phi_{n})}{\delta \phi} \right|_2  \left\|\phi_n\right\|_{\sH^2}   |\xi|_{D(A)}\\
  &\leq&  c\left| \dfrac{\delta E_{}(\phi_{n})}{\delta \phi} \right|_2  (1+E(\phi_n))   |\xi|_{D(A)}.
\end{eqnarray*}
In the last inequality we used \eqref{eq.ineq.E2}.
Then, by Lemma \ref{Le.sobolev} and the estimates in Theorem \ref{thm.moments} (with $k=3$), we deduce that there exists $c_3>0$ such that
\begin{eqnarray*}
    \E\left[  \| J_3(\cdot)\|_{W^{\theta,p}([0,T];\R)}^2  \right] 
      & \leq & \E\left[ \int_0^T\left| \la P_{n}\left(\dfrac{\delta E_{}(\phi_{n})}{\delta \phi}\nabla \phi_n\right),\xi \ra_{(D(A)',D(A))}\right|^2\dd\tau\right] \\
         & \leq & c \E\left[ \int_0^T\left| \dfrac{\delta E_{}(\phi_{n})}{\delta \phi} \right|_2^2  (1+E(\phi_n))^2 \dd \tau\right]|\xi|_{D(A)}^2\\
         & \leq & c_3|\xi|_{D(A)}^2
\end{eqnarray*}   
The term $J_4$ is treated as done in \eqref{eq.sobolev.C}. Then, provided $\theta<1/2$, there exists $c_4>0$ such that
\[
  \E\left[\int_0^T\int_0^T\frac{|(P_n \Ca)(W(t)-W(s)|_2^p}{|t-s|^{1+\theta p}}\dd s\dd t\right]
    \leq c(\Tr[\Ca^*\Ca])^{\frac{p}{2}}\int_0^T\int_0^T \frac{|t-s|^p }{|t-s|^{1+\theta p}}\dd s\dd t  \leq c_3.
\]
Finally, the results follows by taking into account the estimates obtained for $J_1,J_2,J_3,J_4$.
\end{proof}

\begin{Prop} \label{prop.holderphi}
 For any $T>0$, $\theta\in ]0,1/2[$, $p\geq 1$ there exists $c=c(T,\theta,p)>0$ such that for any $n\in \N$ 
 \[
 \E\left[ \|\phi_n\|_{W^{\theta,p}([0,T];\sL^2(\qspace))}^2\right]\leq c.
 \]
\end{Prop}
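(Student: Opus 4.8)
The plan is to follow the scheme of Proposition \ref{prop.holderw}, but now for the order parameter. Integrating the second line of \eqref{eq.approx} in time, I would write
\[
  \phi_n(t)=\phi_n(0)+K_1(t)+K_2(t)+K_3(t),
\]
with $K_1(t)=-\int_0^t\pi_n(w_n\cdot\nabla\phi_n)\,\dd\tau$, $K_2(t)=-\gamma\int_0^t\pi_n\!\left(\dfrac{\delta E(\phi_n)}{\delta\phi}\right)\dd\tau$ and $K_3(t)=\pi_n\Cb W'(t)$. The constant-in-time term $\phi_n(0)=\pi_n(\phi_0+1)-1$ has vanishing Gagliardo seminorm and is bounded in $\sL^2(\qspace)$, so it only contributes a fixed constant to the $W^{\theta,p}$-norm. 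By the triangle inequality and $(\sum_i a_i)^2\le 4\sum_i a_i^2$ it then suffices to bound $\E\big[\|K_i\|_{W^{\theta,p}([0,T];\sL^2(\qspace))}^2\big]$ for $i=1,2,3$.

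For the two drift terms I would apply Lemma \ref{Le.sobolev} with $X=\sL^2(\qspace)$, which gives $\|K_i\|_{W^{\theta,p}}\le c\,\|f_i\|_{\sL^2([0,T];\sL^2(\qspace))}$ for every $p\ge1$ and $\theta<1/2$, reducing the problem to an $\sL^2$-in-time bound on each integrand. For $K_2$ this is immediate since $|\pi_n\frac{\delta E(\phi_n)}{\delta\phi}|_2\le|\frac{\delta E(\phi_n)}{\delta\phi}|_2$, and Theorem \ref{thm.moments} with $k=1$ yields $\E\int_0^T|\frac{\delta E(\phi_n)}{\delta\phi}|_2^2\,\dd\tau\le c$. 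The transport term in $K_1$ is the main point: using $|\pi_n g|_2\le|g|_2$, Hölder's inequality and the Sobolev embeddings $\sH^1\subset\sL^6(\qspace)$, $\sH^1\subset\sL^3(\qspace)$ (valid in dimensions $2$ and $3$), one obtains
\[
  |\pi_n(w_n\cdot\nabla\phi_n)|_2\le|w_n|_6\,|\nabla\phi_n|_3\le c\,\|w_n\|_\sV\,\|\phi_n\|_{\sH^2}.
\]
By \eqref{eq.ineq.E2} one has $\|\phi_n\|_{\sH^2}^2\le c(1+E(\phi_n))$, and since $\|w_n\|_\sV^2\le c\,\mathcal F$ and $1+E(\phi_n)\le c(1+\mathcal F)$, the integrand is controlled by $c(1+\mathcal F^2)$; Theorem \ref{thm.moments} with $k=2$ then gives $\E\int_0^T|\pi_n(w_n\cdot\nabla\phi_n)|_2^2\,\dd\tau\le c$, so $\E\|K_1\|_{W^{\theta,p}}^2\le c$ uniformly in $n$.

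For the noise term I would estimate the Gagliardo seminorm directly. The increment $\pi_n\Cb(W'(t)-W'(s))$ is a centred Gaussian in $\sL^2(\qspace)$ with $\E|\pi_n\Cb(W'(t)-W'(s))|_2^2=|t-s|\,\Tr[(\pi_n\Cb)^*(\pi_n\Cb)]\le|t-s|\,\Tr[\Cb^*\Cb]$, and $\Tr[\Cb^*\Cb]<\infty$ by Proposition \ref{Prop.trace} under Hypothesis \ref{hyp.trace}. Equivalence of Gaussian moments then gives $\E|\pi_n\Cb(W'(t)-W'(s))|_2^p\le c_p|t-s|^{p/2}$ uniformly in $n$, whence
\[
  \E\int_0^T\!\!\int_0^T\frac{|K_3(t)-K_3(s)|_2^p}{|t-s|^{1+\theta p}}\,\dd s\,\dd t
    \le c\int_0^T\!\!\int_0^T |t-s|^{\frac{p}{2}-1-\theta p}\,\dd s\,\dd t,
\]
which is finite precisely because $\theta<1/2$; the $\sL^p$-in-time part of $\|K_3\|_{W^{\theta,p}}$ is handled the same way. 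This shows $\E\|K_3\|_{W^{\theta,p}}^p<\infty$ uniformly in $n$, from which the desired bound on $\E\|K_3\|_{W^{\theta,p}}^2$ follows by Jensen's inequality when $p\ge2$, and for $1\le p<2$ by first estimating in $W^{\theta',2}([0,T];\sL^2(\qspace))$ for some $\theta\in(\theta',1/2)$ and using the Slobodeckij embedding $W^{\theta',2}\hookrightarrow W^{\theta,p}$ on the finite interval $[0,T]$. Collecting the three estimates yields the claim. I expect the transport term $K_1$ to be the main obstacle: it is the only place where one must combine simultaneously the $\sV$-regularity of $w_n$, the $\sH^2$-regularity of $\phi_n$ extracted from the bending energy via \eqref{eq.ineq.E2}, and the second moment of $\mathcal F$ from Theorem \ref{thm.moments}, with a minor bookkeeping point being the passage from $\E\|K_3\|_{W^{\theta,p}}^p$ to $\E\|K_3\|_{W^{\theta,p}}^2$ for $p<2$.
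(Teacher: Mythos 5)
Your proof is correct and follows the same overall scheme as the paper: the decomposition $\phi_n=K_1+K_2+K_3$ into the transport term, the chemical-potential term and the stochastic term, with Lemma \ref{Le.sobolev} reducing the two drift terms to $\sL^2$-in-time bounds and the Gaussianity of the increments handling $K_3$. The one substantive difference is the treatment of $K_1$. The paper bounds $|\pi_n(w_n\cdot\nabla\phi_n)|_2$ by $\sup_{t}|w_n|_\infty\,|\nabla\phi_n|_2$ and then invokes Theorem \ref{thm.montentsup}; but that theorem only controls $\E\bigl[\sup_t\|w_n\|_{\sV}^{2k}\bigr]$, and $\sV\not\subset\sL^\infty(\qspace)$ in dimensions $2$ and $3$, so as written the paper's estimate is not uniform in $n$ without an extra argument. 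Your H\"older pairing $|w_n|_6\,|\nabla\phi_n|_3\leq c\,\|w_n\|_{\sV}\|\phi_n\|_{\sH^2}$ combined with \eqref{eq.ineq.E2} and Theorem \ref{thm.moments} with $k=2$ uses only quantities that are genuinely controlled, so your route is the more careful one here; what it costs is a second moment of $\mathcal F$ rather than a first. You also treat two points the paper silently skips (the constant term $\phi_n(0)$ and the passage from the $p$-th to the second moment of $\|K_3\|_{W^{\theta,p}}$); on the latter, note that for $p<2$ the Slobodeckij-type embedding on $[0,T]$ goes as $W^{\theta',2}\hookrightarrow W^{\theta,p}$ with $\theta<\theta'<1/2$, i.e.\ you must estimate at a \emph{larger} auxiliary smoothness index $\theta'$, not a smaller one as your parenthetical $\theta\in(\theta',1/2)$ suggests — a harmless swap since $\theta<1/2$ leaves room to choose such a $\theta'$.
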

\begin{proof}
For any $n$ we have
\[
      \phi_n(t)=\int_0^t \pi_n\left(w_n\nabla\phi_n\right)d\tau -\gamma \int_0^t\pi_n\left(\frac{\delta E}{\delta\phi}(\phi_n(\tau))\right)d\tau +(\pi_n \Cb)W'(t) 
        = K_1(t)+K_2(t)+K_3(t). 
\]
We proceed by estimating each term. 
For $K_1$ we have, using elementary inequalities
\[
   \int_0^T |\pi_n\left(w_n\nabla\phi_n\right)|_2^2\dd\tau\leq \sup_{0\leq t\leq T}|w_n|_\infty \int_0^T |\nabla\phi_n|_2^2\dd\tau\leq T \sup_{0\leq t\leq T}|w_n|_\infty^2+\int_0^T |\nabla\phi_n|_2^4\dd\tau.
\]
Then by Lemma \ref{Le.sobolev} and Theorem \ref{thm.montentsup} we deduce that there exists $c_1>0$, independent by $n$ such that
\begin{eqnarray*}
   \E\left[ \|K_1(\cdot)\|_{W^{\theta,p}([0,T];\sL^2(\qspace))}^2\right] &\leq&  c(\theta,2)\int_0^T |\pi_n\left(w_n\nabla\phi_n\right)|_2^2\dd\tau \\
    &\leq& T  c(\theta,2)\E\left[\sup_{0\leq t\leq T}|w_n|_\infty^2\right]+c(\theta,2)\E\left[\int_0^T |\nabla\phi_n|_2^4\dd\tau\right] \\
    &\leq& T  c(\theta,2)\E\left[\sup_{0\leq t\leq T}|w_n|_\infty^2\right]+c\E\left[\int_0^T \left(1+E(\phi_n(\tau))\right)\dd\tau\right] \leq  c_1.
\end{eqnarray*}
In the last inequality we used \eqref{eq.ineq.E2}.

For $K_2$ we have, by Lemma \ref{Le.sobolev} and Theorem \ref{thm.moments}, that for some $c_2>0$, independent by $n$, it holds  
\begin{eqnarray*}
  \E\left[ \|K_2(\cdot)\|_{W^{\theta,p}([0,T];\sL^2(\qspace))}^2\right]\leq  \E\left[ \left\|\frac{\delta E}{\delta\phi}(\phi_n(\tau))   \right\|_{L^2([0,T];\sL^2(\qspace))}^2\right]<c_2
\end{eqnarray*}

For the last term we have, by the gaussianity of $\Cb(W'(t)-W'(s))$ that there exists $c=c(p)$ such that 
$\E[ |(\pi_n \Cb)(W'(t)-W'(s)|_2^p] \leq c (\Tr[\Cb^*\Cb])^{\frac{p}{2}} |t-s|^{\frac{p}{2}}$.
Then, 
\begin{eqnarray} \label{eq.sobolev.C}
  \E\left[\int_0^T\int_0^T\frac{|(\pi_n \Cb)(W'(t)-W'(s)|_2^p}{|t-s|^{1+\theta p}}\dd s\dd t\right]
    &\leq& c(\Tr[\Cb^*\Cb])^{\frac{p}{2}}\int_0^T\int_0^T \frac{|t-s|^p }{|t-s|^{1+\theta p}}\dd s\dd t  \leq c_3
\end{eqnarray}
provided $\theta<1/2$.
Taking into account the estimates on $K_1$, $K_2$, $K_3$ we obtain the result.
\end{proof}

In which follows, we denote by  $L_w^2([0,T];D(A)')$ the space $L^2([0,T],D(A)')$ endowed with the weak $L^2$ topology.
\begin{Le}[Tightness] \label{le.tightness}
 For $(w_0,\phi_0)\in D(A)\times \sL^2(\qspace)$ with $\phi_0=-1$ on $\partial \qspace$, $T>0$, $n\in \N$, let $(w_n,\phi_n)$ the solution of \eqref{eq.approx} in $[0,T]$.
 Then, for any $p>2$, $\rho>0$, the laws of $w_n, n\in \N$ are tight in 
 \[
     \mathcal{C}([0,T];D(A^{-\rho}))\cap \sL^p([0,T];V)\cap \sL_w^2([0,T];D(A)')
 \]
Moreover, for any $\sigma>0$,  the laws of $\phi_n, n\in \N$ are tight in
\[
  \mathcal{C}([0,T];\sH^{-\sigma}(\qspace))\cap \sL^p([0,T];\sH^2(\qspace))\cap \sL_w^2([0,T];(\sH^4(\qspace))') .
\]
\end{Le}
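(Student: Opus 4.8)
The plan is to establish tightness by verifying the hypotheses of a standard Aubin-Lions-Simon type compactness criterion, combined with the fractional Sobolev regularity in time already obtained in Propositions \ref{prop.holderw} and \ref{prop.holderphi}. Recall the classical result (see e.g. Flandoli-Gatarek or Lions): if a family of processes is bounded in $\sL^p([0,T];Y_1)$ and in $W^{\theta,p}([0,T];Y_0)$, where the embeddings $Y_1 \hookrightarrow Y \hookrightarrow Y_0$ hold with $Y_1 \hookrightarrow Y$ compact, then the laws are tight in $\sL^p([0,T];Y)$; under the same bounds the laws are also tight in $\mathcal{C}([0,T];Y_0)$ when $Y_1 \hookrightarrow Y_0$ is compact and $\theta p > 1$, and tightness in the weak $\sL^2$ topology of a reflexive space follows simply from a uniform $\sL^2$ bound together with boundedness of the relevant Borel sets. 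I would therefore assemble the three ingredients corresponding to the three factors in each product space.

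For the fluid unknown, first I would record the uniform moment bounds from Theorem \ref{thm.montentsup} and Theorem \ref{thm.moments}: the former controls $\E[\sup_t(|w_n|_2^2+\alpha^2|\nabla w_n|_2^2)^k]$ and hence $w_n$ in $\sL^\infty([0,T];\sV)$ in every moment, while the latter controls $\E[\int_0^T|Aw_n|_2^2\,ds]$, giving a uniform bound in $\sL^2([0,T];D(A))$. Interpolating, $w_n$ is bounded in $\sL^p([0,T];\sV)$ for $p$ as large as the moment estimates allow, which covers any fixed $p>2$. Using the compact embedding $D(A)\hookrightarrow\hookrightarrow \sV$ together with the time-regularity bound in $W^{\theta,p}([0,T];D(A)')$ from Proposition \ref{prop.holderw} (noting $w_n+\alpha^2Aw_n$ and $w_n$ differ by an isomorphism $D(A)\to\sH$ so the bound transfers), the Aubin-Lions criterion yields tightness in $\sL^p([0,T];\sV)$. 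For the continuity factor I would interpolate the $\sL^\infty$-in-time $\sH$ bound against the $W^{\theta,p}([0,T];D(A)')$ bound and invoke the compact embedding $\sH\hookrightarrow\hookrightarrow D(A^{-\rho})$ (valid for any $\rho>0$), giving tightness in $\mathcal{C}([0,T];D(A^{-\rho}))$ provided $\theta p>1$, which we may arrange by choosing $p$ large. Finally, tightness in $\sL^2_w([0,T];D(A)')$ follows because balls of $\sL^2([0,T];D(A)')$ are weakly compact and the uniform $\sL^2$ bound confines the laws to such balls up to arbitrarily small mass. I would then intersect the three: tightness in each factor of a finite product gives tightness of the joint law by the standard argument that a product of compact sets is compact.

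The argument for $\phi_n$ is entirely parallel. The uniform bound $\E[\int_0^T E(\phi_n)\,d\tau]$ from Theorem \ref{thm.moments} together with inequality \eqref{eq.ineq.E2} controls $|\Delta\phi_n|_2$ and hence gives a uniform $\sL^2([0,T];\sH^2(\qspace))$ bound, upgraded to $\sL^p$ via the higher moments; the $\sL^2([0,T];\sL^2(\qspace))$ control of $\frac{\delta E}{\delta\phi}(\phi_n)$ combined with Proposition \ref{prop.ineq.E3} even furnishes a bound in $\sL^2([0,T];\sH^4(\qspace))$, which is what makes the weak-$\sL^2$ factor with dual space $(\sH^4(\qspace))'$ natural. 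Proposition \ref{prop.holderphi} supplies the $W^{\theta,p}([0,T];\sL^2(\qspace))$ time regularity. Using the compact embeddings $\sH^2\hookrightarrow\hookrightarrow \sH^{2-\delta}$ and, for continuity, $\sL^2(\qspace)\hookrightarrow\hookrightarrow \sH^{-\sigma}(\qspace)$, the same Aubin-Lions and interpolation steps give tightness in $\sL^p([0,T];\sH^2(\qspace))$ and $\mathcal{C}([0,T];\sH^{-\sigma}(\qspace))$ respectively, while the $\sL^2([0,T];\sH^4(\qspace))$ bound yields tightness in $\sL^2_w([0,T];(\sH^4(\qspace))')$.

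The main obstacle I anticipate is not any single estimate — all the needed uniform bounds are already in hand — but rather the bookkeeping of matching the space regularity exponents to the compactness criterion, in particular ensuring $\theta p>1$ for the $\mathcal{C}([0,T];\cdot)$ factors while simultaneously keeping $\theta<1/2$ (the ceiling imposed by the Wiener process in Propositions \ref{prop.holderw} and \ref{prop.holderphi}); this forces $p>2$, which is precisely why the statement restricts to $p>2$ and why the higher-moment estimates of Theorem \ref{thm.montentsup} are essential to guarantee the $\sL^p$-in-time bounds rather than merely $\sL^2$. A secondary technical point is justifying tightness in the weak $\sL^2$ topology carefully: since this topology is not metrizable on all of $\sL^2([0,T];D(A)')$, I would restrict to the (metrizable, weakly compact) balls on which the laws concentrate, so that Prohorov's theorem applies on each such ball and the union-over-balls argument delivers tightness of the full sequence.
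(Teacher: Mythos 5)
Your overall strategy is the one the paper uses: combine the uniform moment bounds of Theorems \ref{thm.moments} and \ref{thm.montentsup} with the fractional time regularity of Propositions \ref{prop.holderw} and \ref{prop.holderphi}, feed them into the Flandoli--Gatarek compact embeddings, handle the weak-$\sL^2$ factor via boundedness in the predual, and conclude by Prokhorov. The continuity factors and the weak-$\sL^2$ factors are handled correctly. However, there is a genuine gap in how you obtain tightness in the middle factors $\sL^p([0,T];\sV)$ and $\sL^p([0,T];\sH^2(\qspace))$ for $p>2$. The Aubin--Lions--Simon criterion you quote requires a uniform bound in $\sL^p([0,T];Y_1)$ with $Y_1\hookrightarrow Y$ \emph{compact}, where $Y$ is the space in which you want tightness. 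For the fluid variable you invoke the compact embedding $D(A)\hookrightarrow\sV$, but the only available bound in $D(A)$ is $\sL^2$ in time (it comes from $\nu\int_0^T|Aw_n|_2^2\,\dd s$ in Theorem \ref{thm.moments}); applied as stated, the criterion delivers tightness only in $\sL^2([0,T];\sV)$, not in $\sL^p([0,T];\sV)$ for $p>2$. Your sentence ``interpolating, $w_n$ is bounded in $\sL^p([0,T];\sV)$'' does not help, because a bound in the target space itself carries no compactness. Similarly, for the phase field you invoke $\sH^2\hookrightarrow\sH^{2-\delta}$, which can only produce tightness in $\sL^p([0,T];\sH^{2-\delta})$, a strictly weaker conclusion than the claimed $\sL^p([0,T];\sH^2(\qspace))$.

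The missing step is precisely the interpolation into an \emph{intermediate} space, strictly more regular than the target, in which an honest $\sL^p$-in-time bound holds. For $w_n$ one combines the $\sL^\infty_t\sV$ control (all moments, Theorem \ref{thm.montentsup}) with the $\sL^2_t D(A)$ control via
$\|w\|_{\sH^{1+\frac{2}{p}}}^{p}\leq \|w\|_{\sH^{1}}^{p-2}\|w\|_{\sH^{2}}^{2}$,
which gives a uniform bound in $\sL^p(\Omega;\sL^p([0,T];\sH^{1+\frac2p}))$, and then one applies the Flandoli--Gatarek theorem with the compact embedding $\sH^{1+\frac2p}\hookrightarrow\sV$. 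For $\phi_n$ the analogue is $\|\phi\|_{\sH^{2+\frac{4}{p}}}^p\leq\|\phi\|_{\sH^2}^{p-2}\|\phi\|_{\sH^4}^2$, using the $\sL^2_t\sH^4$ bound coming from Proposition \ref{prop.ineq.E3} (which you do derive, but use only for the weak-$\sL^2$ factor) together with the $\sL^\infty_t$ control of $E(\phi_n)$ and \eqref{eq.ineq.E2}. With these two interpolations inserted, your argument closes and coincides with the paper's proof; without them the $\sL^p$ factors for $p>2$ are not justified.
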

\begin{proof}
The classical interpolation inequality
\[
   \|w\|_{\sH^{1+\rho}}\leq  \|w\|_{\sH^{1}}^{1-\rho} \|w\|_{\sH^{2}}^\rho, \qquad \rho\in [0,1]
\]
implies
\[
     \|w\|_{\sH^{1+\frac{2}{p}}}^{p}\leq  \|w\|_{\sH^{1}}^{p-2} \|w\|_{\sH^{2}}^2, \qquad p\in [2,\infty[.
\]
Then, by Theorem \ref{thm.moments} and Proposition \ref{prop.holderw} implies that $(w_n)_n$ is bounded in 
\begin{equation*} 
   \sL^p\left(\Omega;\sL^p([0,T];\sH^{1+\frac{2}{p}})\right)\cap \sL^2\left( \Omega; \sL^2([0,T];D(A))\right)\cap \sL^2\left( \Omega; W^{\theta, p}([0,T];\sH)\right)
\end{equation*}
for any $p\in]2,\infty[$ and $\theta<1/2$ such that $\theta p>1$.
Taking into account Theorem \cite[Theorem 2.1 and Theorem 2.2]{FlandoliGatarek}, for any $p\in]2,<\infty[ $ and $\theta<1/2$ such that $\theta p>1$ the embeddings 
\begin{eqnarray*}
  &&   W^{\theta, p}([0,T];\sH)  \hookrightarrow \mathcal{C}([0,T];D(A^{-\rho})),\qquad \rho>0\\
  &&  \sL^p([0,T];\sH^{1+\frac{2}{p}})     \cap   W^{\theta, p}([0,T];\sH)\hookrightarrow \sL^p([0,T];\sV)
\end{eqnarray*}
are compact.
Moreover, we have that $\sL^2([0,T];D(A))$ is compactly embedded in the complete metrizable space $\sL_w^2([0,T]; D(A)')$.
Then, the result follows by Prokhorov's theorem.

In order to show the tightness of the laws of $\phi_n$, 
notice that by \eqref{eq.ineq.E3} there exists $c>0$, independent by $n$, such that 
\[
  \E\left[\int_0^T|\Delta^2\phi_n|_2^2\ddt \right]\leq c \E\left[\int_0^T\left(\left|\dfrac{\delta E(\phi_{n})}{\delta \phi} \right|_2^2+1+(E(\phi))^2\right)\ddt \right].
\]
Taking into account Theorem \ref{thm.moments}, this implies that the sequence $(\phi_n)_n$ is uniformly bounded in $ \sL^2(\Omega;L^2([0,T];\sH^4(\qspace)))$
and then the laws of $\phi_n,n\in \N$ are tight in  the complete metrizable space  $L^2_w([0,T];(\sH^4(\qspace))')$.
By the interpolation formula $\|\phi\|_{\sH^{1+2\rho}} \leq \|\phi\|_{\sH^2}^{1-\rho}\|\phi\|_{\sH^4}^\rho$ we deduce that for some $c>0$
\[
   \|\phi\|_{\sH^{2+\frac{4}{p}}}^p\leq \|\phi\|_{\sH^2}^{p-2}\|\phi\|_{\sH^4}^2\leq  c\|\phi\|_{\sH^1}^{p-2}\left(\|\phi\|_{\sH^2}^2+|\Delta^2\phi|_2^2\right),\qquad p\geq2.
\]
Moreover, by \eqref{eq.ineq.E2}, \eqref{eq.ineq.E3}, we get that for some $c>0$, $p'\geq 2$ it holds
\[
     \|\phi_n\|_{\sH^{2+\frac{4}{p}}}^p\leq c(1+E(\phi_n)^{p'}) \left(1+\left|\frac{\delta E}{\delta \phi}(\phi_n)  \right|_2^2\right).
\]
Then, thanks to Theorem \ref{thm.moments}, we have that for any $p\geq2$ the sequence $(\phi_n)_n$ is uniformly bounded in $\sL^p\left(\Omega;\sL^p([0,T];\sH^{2+\frac{4}{p}})\right)$, $p\geq2$.

Consequently, by Proposition \ref{prop.holderphi} the sequence $(\phi_n)_n$ is   bounded in 
\[
  \sL^p\left(\Omega;\sL^p([0,T];\sH^{2+\frac{4}{p}})))\right)\ \cap\  \sL^2\left(\Omega;W^{\theta,p}([0,T];\sL^2(\qspace))\right)\  \cap \ \sL^2\left(\Omega;\sL^2([0,T];\sH^4(\qspace))\right)\qquad \theta<\frac12,  \,  p<\infty,
\]
endowed with the conditions $\phi_n=-1 $ on $\partial \qspace$, $\Delta\phi_n=0$ on $\partial \qspace$.
Since by \cite[Theorem 2.1 and Theorem 2.2]{FlandoliGatarek}) we have that the embeddings
\begin{eqnarray*} 
   &&  W^{\theta,p}([0,T];\sL^2(\qspace))\hookrightarrow \mathcal{C}([0,T];\sH^{-\sigma}(\qspace)),\qquad \sigma>0 \\
   && \sL^p([0,T];\sH^{2+\frac{4}{p}}(\qspace))\cap W^{\theta,p}([0,T];\sL^2(\qspace))\hookrightarrow \sL^p([0,T];\sH^2(\qspace)), \qquad  \theta p >2
\end{eqnarray*}
are compact, the result follows by Prokhorov's Theorem.
\end{proof}

\begin{Th} \label{th.compactness}
Let $(w_0,\phi_0)\in D(A)\times \sL^2(\qspace)$ with $\phi_0=-1$ on $\partial \qspace$.
Then, there exists a probability space $(\tilde\Omega,\mathcal{\tilde F},\tilde\Prb)$, 
two cylindrical Wiener processes $\tilde W(t)$, $\tilde Z(t)$ defined on $(\tilde\Omega,\mathcal{\tilde F},\tilde\Prb)$,  stochastic processes 
\begin{eqnarray*}
  && w\in    \mathcal{C}([0,T];D(A^{-\rho}))\cap \sL^p([0,T];\sV)\cap \sL^2([0,T];D(A)),\qquad \rho>0,\\ 
  && \phi\in \mathcal{C}([0,T];\sH^{-\sigma}(\qspace))\cap \sL^p([0,T];\sH^{2}(\qspace))\cap \sL^2([0,T];\sH^4(\qspace)), \qquad  \sigma>0, \\ 
  && \zeta\in \sL^2([0,T];\sL^2(\qspace))
\end{eqnarray*}
 and subsequences ( for simplicity they are not relabeled ) such that for any  $p<\infty$  and $\tilde\Prb$-a.s. the solution $(w_n,\phi_n)$ of problem \eqref{eq.approx}
 with $\tilde W(t)$ and $\tilde Z(t)$ instead of $W(t)$, $Z(t)$ satisfies
\begin{equation*}
 \begin{split}
 (i)\ & w_n\to w \quad \text{ strongly in } \mathcal{C}([0,T];D(A^{-\rho})),\, \rho>0\\ 
   (ii)\ & w_n\to w \quad \text{ strongly in } \sL^p([0,T]; \sV),\, p\in [1,\infty[\\
 (iii)\ & w_n\to w \quad \text{ weakly in } \sL^2([0,T]; D(A))\\
    (iv)\ & \phi_n\to\phi \quad \text{ strongly in } \mathcal{C}([0,T];\sH^{-\sigma}(\qspace)), \, \sigma>0 \\
  (v)\ & \phi_n\to\phi \quad \text{ strongly in } \sL^p([0,T];\sH^2),\, p\in[1,\infty[ \\
(vi)\ & \Delta^2\phi_n\to \Delta^2 \phi \text{ weakly in } \sL^2([0,T]; \sL^2(\qspace))\\
 (vii)\ & \dfrac{\delta E(\phi_{n})}{\delta \phi} \to \zeta \quad \text{ weakly in } \sL^2([0,T]; \sL^2(\qspace))\\
 (viii)\ & f(\phi_n)\to f(\phi) \text{  strongly  in } \sL^2([0,T]; \sL^2(\qspace))
\end{split}
\end{equation*}
\end{Th}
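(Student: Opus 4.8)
The plan is to combine the tightness obtained in Lemma~\ref{le.tightness} with Prokhorov's and Skorokhod's theorems in order to pass to an auxiliary probability space on which the sequences converge almost surely, and then to upgrade these almost sure convergences to the various weak and strong modes listed in (i)--(viii) by invoking the uniform moment bounds of Theorem~\ref{thm.moments}. First I would note that the laws of the driving pair $(W,Z)$ do not depend on $n$, hence are trivially tight; together with Lemma~\ref{le.tightness} this makes the joint laws of $(w_n,\phi_n,W,Z)$ tight on the product of the spaces appearing there. By Prokhorov's theorem one extracts a subsequence along which these joint laws converge weakly. Applying the Skorokhod representation theorem then produces a new probability space $(\tilde\Omega,\tilde{\mathcal F},\tilde\Prb)$ carrying copies $(w_n,\phi_n,\tilde W,\tilde Z)$ (not relabelled) with the same laws, converging $\tilde\Prb$-a.s. to a limit $(w,\phi,\tilde W,\tilde Z)$ in the product topology; since the finite-dimensional Galerkin system \eqref{eq.approx} is well-posed, equality of laws transfers the solution property to the new variables. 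The convergences (i), (ii), (iv) and (v) are then immediate, being exactly the strong topologies entering the tightness statement.

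Next I would deduce the weak convergences (iii), (vi) and (vii) from the uniform bounds. Theorem~\ref{thm.moments} with $k=1$ bounds $\E\int_0^T|Aw_n|_2^2\,\dd s$, so $(w_n)$ is bounded in $\sL^2([0,T];D(A))$; extracting a weakly convergent subsequence in this Hilbert space and comparing with the strong limit from (i) through the continuous embedding $\sL^2([0,T];D(A))\hookrightarrow \sL^2([0,T];D(A)')$ forces the weak limit to equal $w$, which gives (iii). The same argument applied to the bound $\E\int_0^T|\Delta^2\phi_n|_2^2\,\dd s\le c$ established in the proof of Lemma~\ref{le.tightness} yields a weak $\sL^2([0,T];\sL^2(\qspace))$ limit for $\Delta^2\phi_n$, identified as $\Delta^2\phi$ by passing to the limit in the distributional pairing using (v); this is (vi). Likewise the bound on $\E\int_0^T|\frac{\delta E}{\delta\phi}(\phi_n)|_2^2\,\dd s$ from Theorem~\ref{thm.moments} produces a weak limit which at this stage we merely name $\zeta$, without yet identifying it with $\frac{\delta E}{\delta\phi}(\phi)$, establishing (vii).

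Finally, for (viii) I would write $f(\phi)=-\Delta\phi+\phi^3-\phi$. The term $-\Delta\phi_n$ converges strongly in $\sL^2([0,T];\sL^2(\qspace))$ directly from the strong $\sL^p([0,T];\sH^2(\qspace))$ convergence (v). For the cubic part I would use the factorisation
\[
  \phi_n^3-\phi^3=(\phi_n^2+\phi_n\phi+\phi^2)(\phi_n-\phi)
\]
together with the Sobolev embedding $\sH^2(\qspace)\hookrightarrow \sL^\infty(\qspace)$, valid for $N=2,3$, to obtain
\[
  |\phi_n^3-\phi^3|_2\le c\left(\|\phi_n\|_{\sH^2}^2+\|\phi\|_{\sH^2}^2\right)|\phi_n-\phi|_2 .
\]
Integrating in time and using (v), which controls $\|\phi_n\|_{\sH^2}$ and forces $|\phi_n-\phi|_2\to0$, shows $\phi_n^3\to\phi^3$ strongly in $\sL^2([0,T];\sL^2(\qspace))$, while the linear term $-\phi$ is trivial; this proves (viii).

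The main obstacle is the correct handling of the Skorokhod representation across the weak-$\sL^2$ factors $\sL_w^2([0,T];D(A)')$ and $\sL_w^2([0,T];(\sH^4(\qspace))')$, which are not Polish in the usual sense: one must either restrict to the bounded sets on which the weak topology is metrizable or appeal to a generalisation of Skorokhod's theorem, and verify that the solution property of \eqref{eq.approx} is preserved under the passage to the new space. Once this is in place the remaining steps are routine consequences of the a priori estimates of Theorem~\ref{thm.moments}, Proposition~\ref{prop.ineq.E3}, and the Sobolev embeddings available in dimensions $N=2,3$.
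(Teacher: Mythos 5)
Your proposal follows essentially the same route as the paper: Lemma \ref{le.tightness} combined with Prokhorov's and Skorokhod's theorems yields the almost sure convergences, the uniform moment bounds of Theorem \ref{thm.moments} supply the weak limits (iii), (vi), (vii), and (viii) is obtained from (v) together with the Sobolev embedding $\sH^2(\qspace)\hookrightarrow\sL^\infty(\qspace)$, exactly as in the paper (which merely leaves your cubic factorisation implicit). The only point to watch is that a bound in $\sL^2(\tilde\Omega;\sL^2([0,T];D(A)))$ alone does not produce a $\tilde\Prb$-a.s. weakly convergent subsequence independent of $\omega$; as you yourself note at the end, the clean fix is the one the paper adopts, namely to include the weak-$\sL^2$ factors in the tightness statement so that the Skorokhod step delivers (iii) and (vi) directly.
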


\begin{proof}
 
Taking into account Lemma \eqref{le.tightness}, by Skorohod representation theorem and by a diagonal extraction argument, there exists a probability space
$(\tilde\Omega,\mathcal{\tilde F},\tilde\Prb)$,  two cylindrical Wiener processes $\tilde W(t)$, $\tilde Z(t)$ defined on $(\tilde\Omega,\mathcal{\tilde F},\tilde\Prb)$, 
two stochastic processes $w,\phi$ such that the convergence conditions in  {\em (i)--(vi)} hold.

{\em (vii)}. By Theorem \ref{thm.moments}, the sequence $\dfrac{\delta  E(\phi_n)}{\delta \phi}$ are bounded in
$\sL^2(\Omega;\sL^2([0,T];\sL^2(\qspace)))$.
Then, by arguing as for the previous point, the result follows by Prokhorov theorem and Skorohod theorem.

$(viii)$ By the expression of $f(\phi_n)$ it is sufficient to show that   that $\Prb$-almost surely $\Delta\phi_n\to \Delta\phi$ 
 and $\phi_n^3\to \phi^3$ strongly in $\sL^p([0,T];\sL^2(\qspace))$. 
 Indeed, the two limits follows by {\em (v)} and by standard Sobolev embedding results.
\end{proof}

\section{ Proof ot Theorem  \ref{thm.intro}}
\subsection{Existence}
By Theorem \ref{th.compactness} we know that there exist subsequences $(w_n)_n$, $(\phi_n)_n$ converging $\widetilde \Prb$-a.s. to processes $(w,\phi)\in  \sL^2(\Omega;\sL^2([0,T];D(A)))\times \sL^2(\Omega;\sL^2([0,T];\sH^2(\qspace)))$.

The rest of the proof will be splitted in several lemma :
in Lemma \ref{le.supbound}, we will show that the processes $(w,\phi)$ satisfied \eqref{eq.aNSCHestimate}.
Then we will show that $w,\phi$ fulfill the definition \ref{def.sol} of a solution for the abstract problem.

\begin{Le} \label{le.supbound}
 Under hypothesis of Theorem \ref{thm.intro}, we have that \eqref{eq.aNSCHestimate} hold.
\end{Le}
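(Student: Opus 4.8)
The plan is to obtain both inequalities in \eqref{eq.aNSCHestimate} by inheriting the uniform bounds already established for the Galerkin approximations in Theorems \ref{thm.moments} and \ref{thm.montentsup}, and then passing to the limit through lower semicontinuity together with Fatou's lemma. Since the Skorohod representation used in Theorem \ref{th.compactness} preserves the laws of $(w_n,\phi_n)$, those two theorems continue to hold, with the same constants, for the processes $(w_n,\phi_n)$ realised on $(\tilde\Omega,\tilde{\mathcal F},\tilde\Prb)$. Throughout I write $\mathcal{F}=\mathcal{F}(w,\phi)$ as in \eqref{eq.defF}.

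For the first (supremum) bound I would use that $\|\cdot\|_\sH$ and $|\cdot|_2$ are lower semicontinuous along the sequences with respect to the weak topologies underlying the $\mathcal{C}([0,T];\cdot)$ convergences of Theorem \ref{th.compactness}. Fixing $\omega$ in the a.s.\ set of convergence and $t\in[0,T]$: since $w_n(t)\to w(t)$ in $D(A^{-\rho})$ and the $\sH$-norms are a.s.\ bounded, a weakly convergent subsequence in $\sH$ has its limit forced to be $w(t)$, giving $\|w(t)\|_\sH\leq \liminf_n\|w_n(t)\|_\sH$, and likewise $|\phi(t)|_2\leq\liminf_n|\phi_n(t)|_2$ from $(iv)$. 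Bounding by the running supremum yields, for every $t$,
\[
\|w(t)\|_\sH+|\phi(t)|_2\leq \liminf_n \sup_{0\leq s\leq T}\big(\|w_n(s)\|_\sH+|\phi_n(s)|_2\big),
\]
whose right-hand side is independent of $t$, so the supremum over $t$ passes to the left. Raising to the power $k$, taking expectation and applying Fatou reduces the claim to a uniform bound on $\E[\sup_s(\|w_n(s)\|_\sH+|\phi_n(s)|_2)^k]$; this follows from Theorem \ref{thm.montentsup} once one notes $\|w_n\|_\sH=|w_n|_2\leq \mathcal{F}^{1/2}$ and, by \eqref{eq.bound.B}, $|\phi_n|_2\leq c(1+E(\phi_n))^{1/2}\leq c(1+\mathcal{F})^{1/2}$, so that $(\|w_n\|_\sH+|\phi_n|_2)^k\leq c(1+\mathcal{F})^{k/2}$.

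The second estimate is the real difficulty, since its integrand is a product of a weight $\mathcal{F}^{k-1}$ and a dissipation term in which $|\nabla w|_2^2$ converges strongly but $|Aw|_2^2$ and $|\tfrac{\delta E(\phi)}{\delta\phi}|_2^2$ converge only weakly. First I would check that the weight converges strongly along the sequence: $|w_n|_2^2$ and $|\nabla w_n|_2^2$ converge by $(ii)$, while $E(\phi_n)\to E(\phi)$ follows from $(v)$ and $(viii)$, using $\mathcal E_\varepsilon(\phi)=\tfrac12|f(\phi)|_2^2$ and the continuous dependence of the penalisation on $\mathcal A(\phi),\mathcal B(\phi)$; the a.e.-in-$t$ convergence is upgraded to strong $L^q([0,T])$ convergence of $\mathcal{F}^{k-1}$ by Vitali's theorem, using the uniform higher moments of Theorem \ref{thm.montentsup}. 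The core ingredient is then a pathwise weighted weak lower semicontinuity lemma: if $g_n\geq0$, $g_n\to g$ strongly and $h_n\rightharpoonup h$ weakly in $L^2([0,T];\cdot)$, then $\sqrt{g_n}\,h_n\rightharpoonup \sqrt g\,h$ weakly, whence
\[
\int_0^T g\,|h|_2^2\,\dd s=\big\|\sqrt g\,h\big\|^2\leq \liminf_n\big\|\sqrt{g_n}\,h_n\big\|^2=\liminf_n\int_0^T g_n\,|h_n|_2^2\,\dd s .
\]
Applying this with $g_n=\mathcal{F}(w_n,\phi_n)^{k-1}$ and with $h_n$ equal to $Aw_n$ and to $\tfrac{\delta E(\phi_n)}{\delta\phi}$ respectively, combined with the strong convergence of the $|\nabla w_n|_2^2$ contribution and the identification $\zeta=\tfrac{\delta E(\phi)}{\delta\phi}$ (from $(vi)$, $(viii)$ and the strong $\sH^2$-convergence $(v)$ via the explicit form \eqref{eq.varE2}), shows $\tilde\Prb$-a.s.\ that the limiting integral is dominated by $\liminf_n$ of the Galerkin integrals. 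A final use of Fatou in $\tilde\Omega$ together with Theorem \ref{thm.moments} closes the estimate, and the mean-square continuity statement then follows from the weak continuity of $w,\phi$ together with these bounds.

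The main obstacle is precisely this weighted weak lower semicontinuity of the product: the factors carrying the top-order information, $|Aw|_2^2$ and $|\tfrac{\delta E(\phi)}{\delta\phi}|_2^2$, are only weakly convergent, so one cannot pass to the limit termwise and must instead transfer the weight onto the weakly convergent factor through $\sqrt{g_n}$. This step needs the full strength of the moment bounds (to get strong $L^q$-convergence of $\mathcal{F}^{k-1}$) and the correct identification of the weak limit $\zeta$ as $\tfrac{\delta E(\phi)}{\delta\phi}$.
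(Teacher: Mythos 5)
Your argument follows the paper's proof in all essentials: lower semicontinuity of the norms under the $\mathcal{C}([0,T];D(A^{-\rho}))$ and $\mathcal{C}([0,T];\sH^{-\sigma})$ convergences plus Fatou for the supremum bound, and, for the dissipation bound, strong convergence of the weight combined with weak $\sL^2$ convergence of $Aw_n$ and of $\frac{\delta E(\phi_n)}{\delta \phi}$ (whose limit is identified exactly as in Lemma \ref{Le.convE}), followed by weak lower semicontinuity of the norm, monotone/Fatou arguments, and the comparison $E(\phi)\leq c(1+\|\phi\|_{\sH^2}^8)$ of \eqref{eq.ineq.E2bis}. The only real difference is technical: the paper truncates the weight at level $M$ (so the product with the weakly convergent factor is automatically bounded in $\sL^2$ and Lemma \ref{Le.convE} applies directly) and then removes the truncation by monotone convergence, whereas you move $\sqrt{g_n}$ onto the weakly convergent factor without truncating, which is fine provided you first pass to a subsequence realising a finite $\liminf$ so that $\sqrt{g_n}\,h_n$ is actually bounded in $\sL^2$ and its weak limit can be identified.
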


\begin{proof}

Let us show the first bound of \eqref{eq.aNSCHestimate}. 
Let us notice that by the definition of the norm in $D(A^{\rho})$ it holds $\|w\|_{D(A^{-\rho})}\leq \|w\|_{\sH}$, for all $\rho>0$.
By Theorem \ref{th.compactness}, 
\[
   \sup_{t\in[0,T]}\|w(t)\|_{D(A^{-\rho})}= \lim_{n\to\infty}\left(\sup_{t\in[0,T]}\|w_n(t)\|_{D(A^{-\rho})}  \right)\leq \liminf_{n\to\infty}\left(\sup_{t\in[0,T]}\|w_n(t)\|_{\sH}  \right)
\]
By Fatou's lemma and Theorem \ref{thm.montentsup} we deduce that for any $k>0$ there exists $c>0$ depending on $k,T,w_0,\phi_0$ such that
\[
   \widetilde\E\left[  \sup_{t\in[0,T]}\|w\|_{\sH}^k\right]
    \leq    \liminf_{n\to\infty}\widetilde\E\left[\sup_{t\in[0,T]}\|w_n\|_{\sH}^k\right]  
    \leq c.
\]
With a similar argument it can be shown that for any $k>0$ there exists $c>0$ depending on $k,T,w_0,\phi_0$ such that
\[
   \widetilde\E\left[ \sup_{t\in[0,T]}|\phi|_2^k\right]
    \leq    \liminf_{n\to\infty}\widetilde\E\left[ \sup_{t\in[0,T]}|\phi_n|_2^k\right]  
    \leq c
\]
which implies that the first bound  in  \eqref{eq.aNSCHestimate} holds.
Let us show the second bound.
Notice that by Theorem \ref{th.compactness} we have, $\widetilde \Prb$-a.s., that the limit 
 $\left(\|w_n(t)\|_{\sV}+\|\phi_n(t)\|_{\sH^2}\right)\wedge M\to \left(\|w(t)\|_{\sV}+\|\phi(t)\|_{\sH^2}\right)\wedge M$ holds in $L^p([0,T])$, for all $p\geq1$ and $M>0$.
Then, by Lemma \ref{Le.convE} we have that the limit  
\[
    \lim_{n\to \infty} \left( \left(\|w_n(t)\|_{\sV}+\|\phi_n(t)\|_{\sH^2}\right)^p\wedge M\right)  \dfrac{\delta E(\phi_n(t))}{\delta \phi} 
    = \left(\left(\|w(t)\|_{\sV}+\|\phi(t)\|_{\sH^2}\right)^p\wedge M\right)  \dfrac{\delta E(\phi(t))}{\delta \phi}
\]
holds weakly in $L^2([0,T]\times \qspace)$, for any $M>0$. Then, for any $M>0$,
\begin{multline*}
    \int_0^T\left(\left(\|w(t)\|_{\sV}+\|\phi(t)\|_{\sH^2}\right)^{2p}\wedge M^2\right)  \left|\dfrac{\delta E(\phi(t))}{\delta \phi}\right|_2^2\dd t \\
       \leq \liminf_{n\to\infty}\int_0^T\left(\left(\|w_n(t)\|_{\sV}+\|\phi_n(t)\|_{\sH^2}\right)^{2p}\wedge M^2\right)  \left|\dfrac{\delta E(\phi_n(t))}{\delta \phi}\right|_2^2\dd t
\end{multline*}
Letting $M\to\infty$, by monotone convergence we obtain
\[
    \int_0^T\left(\|w(t)\|_{\sV}+\|\phi(t)\|_{\sH^2}\right)^{2p} \left|\dfrac{\delta E(\phi(t))}{\delta \phi}\right|_2^2\dd t 
       \leq \liminf_{n\to\infty}\int_0^T\left(\|w_n(t)\|_{\sV}+\|\phi_n(t)\|_{\sH^2}\right)^{2p}   \left|\dfrac{\delta E(\phi_n(t))}{\delta \phi}\right|_2^2\dd t
\]
Finally, by Fatou's Lemma we get %
\begin{multline*}
  \widetilde\E\left[ \int_0^T\left(|w(t)|_V+|\phi(t)|_{\sH^2}\right)^{2p}   \left|\dfrac{\delta E(\phi(t))}{\delta \phi}\right|_2^2\dd t \right]\\
       \leq \liminf_{n\to\infty}\widetilde\E\left[ \int_0^T\left(\|w_n(t)\|_{\sV}+\|\phi_n(t)\|_{\sH^2}\right)^{2p}  \left|\dfrac{\delta E(\phi_n(t))}{\delta \phi}\right|_2^2\dd t\right] \leq c
\end{multline*}
where $c>0$ is given by Theorem \ref{thm.moments}.
By similar arguments we can show that there exists $c>0$ such that
\[
    \widetilde\E\left[ \int_0^T \left(\|w(t)\|_{\sV}+\|\phi(t)\|_{\sH^2}\right)^{2p}\left(|\nabla w|_2^2 +\alpha^2|A  w|_2^2|\right)\dd t \right] \leq c.
\]
To conclude the proof, it is sufficient to notice that thanks to  \eqref{eq.ineq.E2bis} there exists  $c>0$ such that $E(\phi)\leq c(1+|\phi(t)|_{\sH^2}^8)$.
\end{proof}

\begin{Le} \label{le.def.sol}
 Under hypothesis of Theorem \ref{thm.intro}, the limit processes $(w,\phi)$ solve \eqref{eq.aNSCHabstract} in the sense of Definition \ref{def.sol}
\end{Le}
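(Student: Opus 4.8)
The plan is to pass to the limit, $\tilde\Prb$-almost surely, in the weak formulation of the finite-dimensional system \eqref{eq.approx} tested against a fixed $\xi\in D(A)$, and to identify each limit with the corresponding term of \eqref{eq.def.sol}. Testing the first line of \eqref{eq.approx} against $\xi$ gives, for every $t\in[0,T]$,
\begin{equation*}
\begin{split}
\la w_n(t)+\alpha^2 A w_n(t),\xi\ra &= \la P_n w_0+\alpha^2 A P_n w_0,\xi\ra -\nu\int_0^t \la w_n+\alpha^2 A w_n, A\xi\ra\,\dds\\
&\quad -\int_0^t \la \widetilde{B}(w_n,w_n+\alpha^2 A w_n),P_n\xi\ra\,\dds\\
&\quad +\int_0^t \la \frac{\delta E(\phi_n)}{\delta\phi}\,\nabla\phi_n,\, P_n\xi\ra\,\dds + \la (P_n\Ca)^*\xi,\, \tilde W(t)\ra,
\end{split}
\end{equation*}
together with the analogous scalar identity for $\phi_n$; since $P_n\xi\to\xi$ in $D(A)$ and $\pi_n\to\mathrm{Id}$, all projections disappear in the limit.

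I would dispose first of the easy terms. The linear Stokes integral passes to the limit by convergence $(iii)$ of Theorem \ref{th.compactness} (equivalently $Aw_n\rightharpoonup Aw$ weakly in $\sL^2([0,T];\sH)$), tested against the fixed element $\mathbf{1}_{[0,t]}A\xi\in\sL^2([0,T];\sH)$. The initial data converge because $P_n w_0\to w_0$ in $D(A)$ and $\pi_n(\phi_0+1)-1\to\phi_0$ in $\sL^2(\qspace)$; the additive noise terms converge directly since $(P_n\Ca)^*\xi=\Ca^*P_n\xi\to\Ca^*\xi$ in $\sK$ and $\pi_n\Cb\tilde Z(t)\to\Cb\tilde Z(t)$, using the trace-class bounds of Hypothesis \ref{hyp.trace} and Proposition \ref{Prop.trace}. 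The transport term $w_n\cdot\nabla\phi_n\to w\cdot\nabla\phi$ strongly in $\sL^2([0,T];\sL^2(\qspace))$ follows from the strong convergences $(ii)$ and $(v)$ together with $\sH^2(\qspace)\subset\sL^\infty(\qspace)$ in dimension $N\le 3$.

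For the nonlinear term I would exploit a strong--weak product. Writing $v_n=w_n+\alpha^2 A w_n$, estimate $(ii)$ of Proposition \ref{prop.1.1} shows that $(u,v)\mapsto\int_0^t\la\widetilde{B}(u,v),\xi\ra\,\dds$ is a bounded bilinear form on $\sL^2([0,T];\sV)\times\sL^2([0,T];\sH)$, with norm controlled by $\|\xi\|_{D(A)}$. Since $w_n\to w$ strongly in $\sL^2([0,T];\sV)$ by $(ii)$ and $v_n\rightharpoonup w+\alpha^2 Aw$ weakly in $\sL^2([0,T];\sH)$ by $(iii)$, the elementary decomposition $b(u_n,v_n)-b(u,v)=b(u_n-u,v_n)+b(u,v_n-v)$ (first term small by strong convergence and boundedness of $\|v_n\|$, second by weak convergence) yields convergence to $\int_0^t\la\widetilde{B}(w,w+\alpha^2 Aw),\xi\ra\,\dds$; the replacement of $\xi$ by $P_n\xi$ is absorbed by the same bound and $\|P_n\xi-\xi\|_{D(A)}\to0$.

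The crucial and hardest step is to identify the weak limit $\zeta$ from $(vii)$ with $\frac{\delta E(\phi)}{\delta\phi}$, after which the coupling term converges because $\nabla\phi_n\cdot\xi\to\nabla\phi\cdot\xi$ strongly in $\sL^2$ (using $\xi\in D(A)\subset\sL^\infty$) paired with the weak convergence of $\frac{\delta E(\phi_n)}{\delta\phi}$. To identify $\zeta$ I would pass to the limit in the explicit formula \eqref{eq.varE2}: the top-order term satisfies $\Delta^2\phi_n\rightharpoonup\Delta^2\phi$ weakly by $(vi)$, while every remaining term converges strongly. Indeed $(v)$ and the algebra property of $\sH^2(\qspace)$ give $\phi_n^3\to\phi^3$ in $\sL^{p/3}([0,T];\sH^2(\qspace))$, hence $\Delta(\phi_n^3-\phi_n)\to\Delta(\phi^3-\phi)$ strongly; $(viii)$ gives $f(\phi_n)\to f(\phi)$ strongly, which combined with $3\phi_n^2-1\to3\phi^2-1$ and with $\mathcal{A}(\phi_n)\to\mathcal{A}(\phi)$, $\mathcal{B}(\phi_n)\to\mathcal{B}(\phi)$ controls the lower-order contributions. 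By uniqueness of the weak limit this forces $\zeta=\frac{\delta E(\phi)}{\delta\phi}$. Collecting all the limits recovers \eqref{eq.def.sol}, and the regularity and mean-square continuity requirements of Definition \ref{def.sol} follow from Theorem \ref{th.compactness} and Lemma \ref{le.supbound}. The main obstacle is precisely this identification of $\zeta$, since $\frac{\delta E}{\delta\phi}$ mixes the only weakly convergent biharmonic term with several nonlinearities, so one must verify carefully that every contribution other than $\Delta^2\phi_n$ converges strongly.
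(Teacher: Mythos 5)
Your proposal follows essentially the same route as the paper: term-by-term passage to the limit in the tested finite-dimensional equation, with the $\widetilde{B}$-term handled as a continuous trilinear form paired with the strong convergence of $w_n$ in $\sL^2([0,T];\sV)$ and the weak convergence of $w_n+\alpha^2Aw_n$, and the key identification $\zeta=\frac{\delta E(\phi)}{\delta\phi}$ obtained exactly as in the paper's Lemma \ref{Le.convE} by splitting \eqref{eq.varE2} into the weakly convergent biharmonic term and strongly convergent lower-order nonlinearities. The only minor divergence is that the paper establishes the mean-square continuity by an explicit Itô-formula-plus-Fatou argument on the approximations rather than deducing it directly from Theorem \ref{th.compactness} and Lemma \ref{le.supbound}, but this does not affect the substance of the argument.
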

\begin{proof}
Let us first show that $(w,\phi)$ solve \eqref{eq.aNSCHabstract}.
Since $w_n$, $\phi_n$ solves \eqref{eq.approx}, it is sufficient to show that the right-hand side of \eqref{eq.approx} converges to the right-hand side of \eqref{eq.aNSCHabstract}.

Let $\xi \in \sL^2([0,T]; D(A)) $.  
By Theorem \ref{th.compactness},  {\em (iii)} we have 
\[
  \lim_{n\to\infty}\int_0^T\la   w_{n} + \alpha^{2}Aw_{n},\xi(t)\ra \dd t= \int_0^T\la   w + \alpha^{2}Aw,\xi(t)\ra \dd t 
\]
and
\begin{eqnarray*}
\lim_{n\to\infty }\nu  \int_{0}^{T} \langle\int_{0}^{t}( w_{n}(\tau) + \alpha^{2}Aw_{n}(\tau))\dd\tau,A\xi(t)\rangle \ddt
   &=&\lim_{n\to\infty } \nu \int_{0}^{t}\int_{0}^{t}\langle  w(\tau) + \alpha^{2}Aw(\tau), A\xi(t) \rangle d\tau \ddt \\
   &=&  \nu  \int_{0}^{T} \langle\int_{0}^{t}( w (\tau) + \alpha^{2}Aw (\tau))\dd\tau,A\xi(t)\rangle \ddt.
\end{eqnarray*}
Observe that by Proposition \ref{prop.1.1} (ii) it holds
\[
   \left|\int_0^T\int_0^t\la \widetilde{B}(w(\tau),u(\tau)),\xi(t)\ra \dd\tau\dd t\right|
     \leq \left(\int_0^T|w(\tau)|_V^2\dd \tau \right)^{\frac12}\left(\int_0^T|u(\tau)|_2^2\dd \tau \right)^{\frac12}\left(\int_0^T|\xi(t)|_{D(A)}^2\dd t \right)^{\frac12}.
\]
This implies that the trilinear form 
\begin{eqnarray*}
 && \sL^2([0,T];V)\times  \sL^2([0,T];\sL^2(\qspace))\times  \sL^2([0,T];D(A)) \to \R \\
 && (w,u,\xi)\mapsto \int_0^T \int_0^t\la \widetilde{B}(w(\tau),u(\tau)),\xi(t)\ra\dd\tau \dd t
\end{eqnarray*}
is continuous.
Since by Theorem \ref{th.compactness} we have that $\Prb$-a.s. $w_n\to w$ strongly in $\sL^2([0,T];\sV)$, that $w_{n} + \alpha^{2}Aw_{n}\to w + \alpha^{2}Aw $ weakly in $\sL^2([0,T];\sL^2(\qspace))$ and clearly $P_n\xi\to \xi $ strongly in $\sL^2([0,T];D(A))$, 
we deduce that
\begin{eqnarray*}
  &&\lim_{n\to\infty } \int_{0}^{T} \la \int_0^t   P_n\widetilde{B}(w_n  ,w_{n} + \alpha^{2}Aw_{n})(\tau)\dd\tau,\xi(t)\ra_{(D(A)',D(A))}\dd t
\\
 &&\qquad  = \lim_{n\to\infty } \int_{0}^{T} \int_0^t \la  \widetilde{B}(w_n  ,w_{n} + \alpha^{2}Aw_{n})(\tau),P_n\xi(t)\ra_{(D(A)',D(A))} \dd\tau \dd t
\\
 &&\qquad =  \int_{0}^{T} \int_0^t \la  \widetilde{B}(w  , w + \alpha^{2}Aw )(\tau),\xi(t)\ra_{(D(A)',D(A))} \dd\tau \dd t
\end{eqnarray*}
as $n\to\infty$.
Finally, it is easy to see that $\tilde \Prb$-a.s. it holds 
\[
   \lim_{n\to\infty} \int_0^T\la  \int_0^t(P_n \Ca)\dd\tilde W(\tau), \xi(t)\ra \dd t =  \int_0^T\la  \int_0^t  \Ca \dd\tilde  W(\tau), \xi(t)\ra \dd t.
\]
In order to complete the proof, we need the following
\begin{Le} \label{Le.convE}
 We have, $\widetilde \Prb$-a.s.
 \[
    \lim_{n\to\infty}   \dfrac{\delta E(\phi_{n}(t))}{\delta \phi}  = \dfrac{\delta E(\phi(t))}{\delta \phi} \qquad \text{weakly in } \sL^2([0,T];\sL^2(\qspace))
 \]
 and
 \[
    \lim_{n\to\infty} P_n\left( \dfrac{\delta E(\phi_{n}(t))}{\delta \phi}\right) = \dfrac{\delta E(\phi(t))}{\delta \phi} \qquad \text{weakly in } \sL^2([0,T];\sL^2(\qspace)).
 \]
\end{Le}
\begin{proof}
Let us prove the first limit.
By $(vii)$ of Theorem \ref{th.compactness} we have to show that $\zeta =  \dfrac{\delta E(\phi(t))}{\delta \phi}$.
Let $g\in \mathcal{C}_0^\infty([0,T]\times \qspace;\R)$.
We shall show that
\[
  \lim_{n\to\infty} \int_0^T\la \dfrac{\delta E(\phi_{n}(t))}{\delta \phi} ,g(t)\ra \dd t  
    =  \int_0^T\la \dfrac{\delta E(\phi(t))}{\delta \phi},g(t)\ra \dd t.
\]
By the expression \eqref{eq.varE2}  of $\dfrac{\delta E(\phi_{n})}{\delta \phi} $ we have to identify each limit. 
Indeed, if  we have
\[
   \int_0^T\la \Delta^2\phi_n(t),g(t)\ra \dd t\to   \int_0^T\la \Delta^2\phi (t),g(t)\ra \dd t 
\]
by $(vi)$ of Theorem \ref{th.compactness}.
Similarly,
\begin{eqnarray*}
    \lim_{n\to\infty} \int_0^T\la \Delta(\phi_n^3(t)-\phi_n(t)),g(t)\ra \dd t =   \int_0^T\la \phi_n^3(t)-\phi_n(t),\Delta g(t)\ra \dd t 
\\
  =\int_0^T\la \phi^3(t)-\phi(t),\Delta g(t)\ra \dd t  = \int_0^T\la \Delta(\phi^3(t)-\phi(t)),g(t)\ra \dd t 
\end{eqnarray*}
thanks to {\em (v)} of Theorem \ref{th.compactness}.
Moveover, by Theorem \ref{th.compactness}, {\em (v)}, {\em (viii)}, the limit  
\[
   \lim_{n\to\infty}  \int_0^T\la (3\phi^3_n-1)f(\phi_n) , g(t)\ra \dd t  = \int_0^T\la (3\phi^3 -1)f(\phi),g(t)\ra \dd t 
\]
holds. 
For the last term, we have to show that
\begin{equation} \label{eq.bfconvergence}
     \lim_{n\to\infty}  \int_0^T\mathcal{B} (\phi_n(t)) \la  f(\phi_n(t)) , g(t)\ra \dd t  = \int_0^T\mathcal{B} (\phi_n(t))\la  f(\phi(t)),g(t)\ra \dd t 
\end{equation}
Since $  \mathcal{B} (\phi_n)=\dfrac{1}{2}|\nabla\phi_n|_2^2+\dfrac{1}{4}|\phi_n^2-1|_2^2$,
by {\em (v)}  of Theorem \ref{th.compactness} we deduce that $\mathcal{B} (\phi_n)\to \mathcal{B} (\phi)$ in $\sL^p([0,T];\R)$, for any $p\in [1,\infty[$.
On the other side, by {\em (v)} of Theorem \ref{th.compactness} we have   $ f(\phi_n)g\to f(\phi)g$ as $n\to \infty$ in $\sL^p([0,T];\R)$. 
Then, we deduce that \eqref{eq.bfconvergence} holds.

The second limit is obvious since for any $g\in \mathcal{C}_0^\infty([0,T]\times \qspace;\R)$, $P_ng\to g$ strongly in $\sL^2([0,T];\sL^2(\qspace))$ and then
\begin{eqnarray*}
     \lim_{n\to\infty} \int_0^T \la P_n\left( \dfrac{\delta E(\phi_{n}(t))}{\delta \phi}\right),g(t)\ra\ddt &=& \lim_{n\to\infty} \int_0^T \la   \dfrac{\delta E(\phi_{n}(t))}{\delta \phi} ,P_ng(t)\ra\ddt    \\
     &=&\int_0^T \la   \dfrac{\delta E(\phi(t))}{\delta \phi} , g(t)\ra\ddt. 
\end{eqnarray*}
\end{proof}
By the previous lemma and by {\em (vii)} of Theorem \ref{th.compactness} we get that 
\[ 
 \int_0^t P_n\left( \dfrac{\delta E(\phi_{n}(\tau))}{\delta \phi}\right)\nabla\phi_n(\tau)\dd\tau \to \int_0^t  \dfrac{\delta E(\phi (\tau))}{\delta \phi} \nabla\phi (\tau)\dd\tau
\]
strongly in $\sL^2([0,T];\sL^2(\qspace))$. 
Then, in particular, the convergence  holds weakly in $\sL^2([0,T];D(A))$.
So, we have show that $w$ solves the first equation of \eqref{eq.aNSCHabstract}. 
Let us show that $\phi$ solve the second one.
Let us observe that by Theorem \ref{th.compactness}, $\phi_n\to \phi$ strongly in $\sL^p([0,T];\sH^2(\qspace))$.
Moreover, since $w_n\to w$ strongly in $\sL^p([0,T];\sV)$, it easy to show that the limit
\[
  \lim_{n\to\infty} \int_0^t \pi_n\left(w_n\nabla\phi_n  \right)(\tau)\dd\tau    =  \int_0^t (w\nabla\phi_n)(\tau)\dd\tau 
\]
holds in $\sL^2([0,T];\sL^2(\qspace))$.
Finally, it is clear that 
\[
   \lim_{n\to\infty} \int_0^T\la  \int_0^t(\pi_n \Cb)\dd\tilde Z(\tau), \xi(t)\ra \dd t =  \int_0^T\la  \int_0^t  \Cb \dd\tilde  Z(\tau), \xi(t)\ra \dd t.
\]
holds $\tilde \Prb$-a.s.
Then, $(w,\phi)$ is a solution of \eqref{eq.aNSCHabstract}.\\
It remains to verify that $(w,\phi)$ satisfy all the other conditions of Definition \ref{def.sol}.
{\em Continuity of $w+\alpha^2 Aw$, $\phi$.} Notice that since $w+\alpha^2 Aw$, $\phi$ solves  the stochastic differential equation \eqref{eq.def.sol}, then
$w+\alpha^2 Aw \in \sL^2(\Omega;\mathcal{C}([0,T]; D(A)'))$ and $\phi\in \sL^2(\Omega;\mathcal{C}([0,T]; \sL^2(\qspace)))$.
The fact that $\phi$, $w$ are adapted to the filtration $\mathcal{F}_{t}$ is obvious, been $\phi$, $w$ a.s. limit of adapted processes.

It remains to show that $\phi$, $w$ are continuous in mean square.
Indeed, by Itô formula \ref{eq.itow_n} we deduce, 
\begin{eqnarray*}
 && \E\left[|w_n(t)-w_n(t_0)|_2^2+\alpha^2|\nabla (w_n(t)-w_n(t_0))|_2^2\right] \leq  \E\int_{t_0}^t\nu(|\nabla w_n|_2^2 +\alpha^2|A  w_n|_2^2)\dd s\\
  &&\qquad   +\E\int_{t_0}^t\left| \dfrac{\delta E(\phi_n)}{\delta \phi}\right|_2\left|\nabla \phi_n\right|_2|w_n|_\infty \dd s
                                                    +  \frac12 \Tr[(\pi_n\Ca)^*(I+\alpha^2A)^{-1} (\pi_n\Ca)](t-t_0)\\
 && \leq  \E\int_{t_0}^t\nu(|\nabla w_n|_2^2 +\alpha^2|A  w_n|_2^2)\dd s\\
  &&\qquad   +\E\int_{t_0}^t\left| \dfrac{\delta E(\phi_n)}{\delta \phi}\right|_2\left|\nabla \phi_n\right|_2|w_n|_V \dd s
                                                    +  \frac12 \Tr[(\Ca^*(I+\alpha^2A)^{-1} \Ca](t-t_0). 
\end{eqnarray*}
Notice that we have used the property $\la \tilde B(w,w+\alpha^2Aw),w\ra=0$. 
Moreover, by  Theorem \ref{th.compactness} and the bounds in \eqref{eq.aNSCHestimate} we can apply Fatou's Lemma to get, as $n\to\infty$
\begin{eqnarray*}
 && \E\left[|w(t)-w(t_0)|_2^2+\alpha^2|\nabla (w(t)-w(t_0))|_2^2\right] \leq  \E\int_{t_0}^t\nu(|\nabla w|_2^2 +\alpha^2|A  w|_2^2)\dd s\\
  &&\qquad   +\E\int_{t_0}^t\left| \dfrac{\delta E(\phi)}{\delta \phi}\right|_2\left|\nabla \phi\right|_2|w|_V \dd s
                                                    +  \frac12 \Tr[(\Ca^*(I+\alpha^2A)^{-1} \Ca](t-t_0). 
\end{eqnarray*}
Then, the continuity in mean square for $w$ follows. 
In a similar way (we omit the calculus, which are standard) we get the continuity in mean square for the process $\phi$.
\end{proof}

\begin{Coro}
Under hypothesis of Theorem \ref{thm.intro}, we have
\[
     \lim_{n\to\infty} \int_0^t P_n\left( \dfrac{\delta E(\phi_{n}(\tau))}{\delta \phi}\right)\dd \tau  = \int_0^t\dfrac{\delta E(\phi(\tau))}{\delta \phi}\dd \tau  \qquad \text{ in } \sL^p([0,T];\sL^2(\qspace)),\, p\in [1,\infty[.
\]
\end{Coro}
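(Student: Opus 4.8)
The plan is to upgrade the weak convergence of the chemical potential established in Lemma~\ref{Le.convE} into strong convergence of its time primitive, by ``solving'' the evolution equation for $\phi_n$ for that primitive. The key observation is that $P_n\left(\frac{\delta E(\phi_n)}{\delta\phi}\right)$ itself does \emph{not} converge strongly in $\sL^2([0,T];\sL^2(\qspace))$, because the bilaplacian term $\Delta^2\phi_n$ converges only weakly (Theorem~\ref{th.compactness}~(vi)); however the integration in time built into the $\phi_n$--equation regularises this term, which is exactly what we exploit.

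First I would integrate the second equation of \eqref{eq.approx} in time to obtain, $\tilde\Prb$-a.s. in $\mathcal{C}([0,T];\sL^2(\qspace))$,
\[
  \gamma\int_0^t \pi_n\left(\frac{\delta E(\phi_n(\tau))}{\delta\phi}\right)\dd\tau
   = \phi_n(0)-\phi_n(t)-\int_0^t \pi_n\left(w_n\cdot\nabla\phi_n\right)(\tau)\dd\tau + (\pi_n\Cb)\tilde Z(t),
\]
where $\pi_n$ denotes the scalar projection onto $N_n$ (written $P_n$ in the statement, in accordance with Lemma~\ref{Le.convE}). Contrary to the integrand on the left, \emph{every} term on the right-hand side converges strongly in $\sL^p([0,T];\sL^2(\qspace))$ for each $p\in[1,\infty[$.

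Then I would verify the four convergences. The initial datum $\phi_n(0)=\pi_n(\phi_0+1)-1\to\phi_0$ in $\sL^2(\qspace)$, hence, being constant in time, in every $\sL^p([0,T];\sL^2(\qspace))$; and $\phi_n\to\phi$ strongly in $\sL^p([0,T];\sL^2(\qspace))$ by Theorem~\ref{th.compactness}~(v). For the transport term, the Hölder and Sobolev bound $|w_n\cdot\nabla\phi_n|_2\leq c\|w_n\|_{\sV}\|\phi_n\|_{\sH^2}$ (valid in dimensions $2$ and $3$) together with the strong convergences $w_n\to w$ in $\sL^2([0,T];\sV)$ and $\phi_n\to\phi$ in $\sL^2([0,T];\sH^2(\qspace))$ yields $w_n\cdot\nabla\phi_n\to w\cdot\nabla\phi$ in $\sL^1([0,T];\sL^2(\qspace))$; since the primitive map is continuous from $\sL^1([0,T];\sL^2(\qspace))$ into $\mathcal{C}([0,T];\sL^2(\qspace))\subset \sL^p([0,T];\sL^2(\qspace))$, the corresponding primitives converge strongly for every $p$. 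Finally, as $\Tr[\Cb^*\Cb]<\infty$ by Proposition~\ref{Prop.trace} and $\pi_n\to I$ strongly, the set $\{\Cb\tilde Z(t):t\in[0,T]\}$ is a.s. compact in $\sL^2(\qspace)$ and $(\pi_n\Cb)\tilde Z(t)\to\Cb\tilde Z(t)$ in $\mathcal{C}([0,T];\sL^2(\qspace))$, hence in every $\sL^p$.

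Combining these, the primitive $\gamma\int_0^\cdot \pi_n\left(\frac{\delta E(\phi_n)}{\delta\phi}\right)\dd\tau$ converges strongly in $\sL^p([0,T];\sL^2(\qspace))$ to $\phi_0-\phi(\cdot)-\int_0^\cdot w\cdot\nabla\phi\,\dd\tau+\Cb\tilde Z(\cdot)$. It then remains only to identify this limit: since by Lemma~\ref{le.def.sol} the pair $(w,\phi)$ solves \eqref{eq.aNSCHabstract} in the sense of Definition~\ref{def.sol}, the second line of \eqref{eq.def.sol} gives precisely $\gamma\int_0^t\frac{\delta E(\phi(\tau))}{\delta\phi}\dd\tau=\phi_0-\phi(t)-\int_0^t w\cdot\nabla\phi\,\dd\tau+\Cb\tilde Z(t)$, and dividing by $\gamma>0$ concludes. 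The main obstacle is conceptual rather than computational: one must avoid trying to pass to the limit directly in the chemical potential, which converges only weakly, and instead route the argument through its primitive, where the bilaplacian is tamed by the time integration already present in the equation for $\phi_n$.
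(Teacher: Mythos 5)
Your proof is correct and follows exactly the route the paper intends: the Corollary is stated without proof immediately after Lemma~\ref{le.def.sol} precisely because, as you observe, the primitive of $\pi_n\bigl(\tfrac{\delta E(\phi_n)}{\delta\phi}\bigr)$ can be read off from the integrated $\phi_n$-equation, whose remaining terms ($\phi_n(0)$, $\phi_n(t)$, the transport primitive, and the noise) all converge strongly by Theorem~\ref{th.compactness} and the estimates already used in Lemma~\ref{le.def.sol}, after which the limit is identified through Definition~\ref{def.sol}. You also correctly read $P_n$ in the statement as the scalar projection $\pi_n$, consistently with Lemma~\ref{Le.convE}.
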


\subsection{Uniqueness}
\begin{Th}
 Under Hypothesis \ref{hyp.trace} for any initial condition $(w_0,\phi_0)\in D(A)\times \sL^2(\qspace)$ 
  there exists a unique solution $(w,\phi)$ to equation \eqref{eq.aNSPF} such that for any $T>0$ and $\Prb$-a.s.
  \begin{equation}  \label{eq.cond.uniqueness}
  \int_0^T\left(|w(t)|_V^2+ \left|\frac{\delta E}{\delta \phi}(\phi(t))\right|_2^2+|\phi|_{\sH^2}^8+|\Delta^2\phi|_2^2\right)\dd t<\infty
  \end{equation}
\end{Th}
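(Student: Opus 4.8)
The plan is to prove \emph{pathwise} uniqueness inside the class \eqref{eq.cond.uniqueness}; together with the existence already obtained this yields the unique weak solution. I would take two solutions $(w_1,\phi_1)$, $(w_2,\phi_2)$ on the same filtered probability space, driven by the same pair $(W,Z)$ and issued from the same data, each satisfying \eqref{eq.cond.uniqueness}. Writing $\bar w=w_1-w_2$, $\bar\phi=\phi_1-\phi_2$, $\bar u=\bar w+\alpha^2A\bar w$, $\Psi_i=\dfrac{\delta E(\phi_i)}{\delta\phi}$ and $\bar\Psi=\Psi_1-\Psi_2$, the crucial structural remark is that the noise enters additively and identically in both copies, so the stochastic integrals cancel in the differences and $(\bar w,\bar\phi)$ solves a purely pathwise (random) system, with no It\^o correction. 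I may therefore argue $\omega$ by $\omega$ with deterministic energy estimates, the bounds \eqref{eq.cond.uniqueness} entering only to guarantee time-integrability of the coefficients below.

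The energy functional I would use is
\[
   \Lambda(t)=|\bar w(t)|_2^2+\alpha^2|\nabla\bar w(t)|_2^2+|\nabla\bar\phi(t)|_2^2,
\]
testing the fluid difference with $\bar w$ and, importantly, the phase difference with $-\Delta\bar\phi$ (not with $\bar\phi$). Since $w_i=0$ and $\phi_i+1=\Delta\phi_i=0$ on $\partial\qspace$, we have $\bar w=0$ and $\bar\phi=\Delta\bar\phi=0$ on $\partial\qspace$, which legitimises every integration by parts. This produces the dissipation $2\nu(|\nabla\bar w|_2^2+\alpha^2|A\bar w|_2^2)$ from the Stokes term, while the top-order part $\Delta^2\phi$ of $\dfrac{\delta E}{\delta\phi}$ (see \eqref{eq.varE2}) contributes $2\gamma\la-\Delta\bar\phi,\Delta^2\bar\phi\ra=2\gamma|\nabla\Delta\bar\phi|_2^2$, an $\sH^3$-dissipation on $\bar\phi$. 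The inertial term is handled as usual: writing $\widetilde{B}(w_1,u_1)-\widetilde{B}(w_2,u_2)=\widetilde{B}(\bar w,u_1)+\widetilde{B}(w_2,\bar u)$, the identity $\la\widetilde{B}(a,b),a\ra=0$ of Proposition \ref{prop.1.1} kills $\la\widetilde{B}(\bar w,u_1),\bar w\ra$, and the antisymmetry reduces the rest to $-\la\widetilde{B}(\bar w,\bar w),w_2\ra-\alpha^2\la\widetilde{B}(\bar w,A\bar w),w_2\ra$, which the two inequalities of Proposition \ref{prop.1.1} bound by $\tfrac12\nu\alpha^2|A\bar w|_2^2$ plus $g_1(t)\Lambda$ with $g_1(t)=c(\|w_2\|_\sH^{1/2}\|w_2\|_\sV^{1/2}+\|w_2\|_{D(A)}^2)$. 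Here I repeatedly use $\|\bar w\|_\sV^2=|\nabla\bar w|_2^2\le\alpha^{-2}\Lambda$, and $g_1\in\sL^1([0,T])$ because $w_2\in\sL^2([0,T];D(A))$ and $\sup_t\|w_2\|_\sH<\infty$.

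The heart of the matter is the coupling. The forcing $\Prj(\Psi\nabla\phi)$, tested against $\bar w$, splits as $\la\Psi_2\nabla\bar\phi,\bar w\ra+\la\bar\Psi\nabla\phi_1,\bar w\ra$, and all the lower-order pieces of $\bar\Psi$ (from \eqref{eq.varE2}: the differences of $\Delta(\phi^3-\phi)$, of $(3\phi^2-1)f(\phi)$ and of the $M_1,M_2$ terms), together with the transport contributions $\la\Delta\bar\phi,w_1\cdot\nabla\bar\phi+\bar w\cdot\nabla\phi_2\ra$, are controlled by H\"older's inequality, the embeddings $\sH^2\subset \sL^\infty$ and $\sV\subset \sL^6$ (valid in dimensions $2$ and $3$), the interpolation $|\Delta\bar\phi|_2^2\le|\nabla\bar\phi|_2\,|\nabla\Delta\bar\phi|_2$, and \eqref{eq.ineq.E2}, \eqref{eq.ineq.E2bis}; each is absorbed into $\epsilon(|\nabla\Delta\bar\phi|_2^2+|\nabla\bar w|_2^2+\alpha^2|A\bar w|_2^2)$ plus $g_2(t)\Lambda$, with $g_2$ built from $|\Psi_2|_2^2$, $\|\phi_i\|_{\sH^2}^2$ and $\|w_1\|_{D(A)}^{4/3}$, all in $\sL^1([0,T])$ by \eqref{eq.cond.uniqueness}. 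The single genuinely top-order term is $\la\Delta^2\bar\phi\,\nabla\phi_1,\bar w\ra$. I would integrate it by parts \emph{once}, the boundary term vanishing because $\bar w=0$ on $\partial\qspace$, obtaining $-\la\nabla\Delta\bar\phi,(\nabla^2\phi_1)\bar w+\nabla\phi_1\cdot\nabla\bar w\ra$; using $\phi_1\in\sH^4\subset W^{2,\infty}(\qspace)$ this is bounded by $\epsilon|\nabla\Delta\bar\phi|_2^2+c\|\phi_1\|_{\sH^4}^2\,\Lambda$, and $\|\phi_1\|_{\sH^4}^2\sim|\Delta^2\phi_1|_2^2\in\sL^1([0,T])$ again by \eqref{eq.cond.uniqueness}. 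This is exactly why the phase equation must be tested with $-\Delta\bar\phi$: only the resulting $\sH^3$-dissipation $|\nabla\Delta\bar\phi|_2^2$ is strong enough to absorb, after a single integration by parts, the fourth-order coupling term without producing a second, unabsorbable factor of $A\bar w$ or $\Delta\bar\phi$.

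Collecting the bounds and choosing $\epsilon$ small enough to move the dissipative remainders to the left, I obtain $\dfrac{d}{dt}\Lambda(t)\le g(t)\Lambda(t)$ with $g=g_1+g_2\in\sL^1([0,T])$, $\Prb$-a.s. Since $\bar w(0)=0$ and $\bar\phi(0)=0$ give $\Lambda(0)=0$, Gronwall's lemma forces $\Lambda\equiv0$, whence $\bar w\equiv0$ and, by Poincar\'e's inequality together with $\bar\phi=0$ on $\partial\qspace$, $\bar\phi\equiv0$. I expect the main obstacle to be precisely the top-order coupling term $\la\Delta^2\bar\phi\,\nabla\phi_1,\bar w\ra$, which dictates the choice of functional, and the bookkeeping needed to check that every Gronwall coefficient is integrable in time—this is where the strong a priori class \eqref{eq.cond.uniqueness}, in particular $\int_0^T|\Delta^2\phi|_2^2\,\dd t<\infty$ and $\int_0^T|w|_\sV^2\,\dd t<\infty$, is indispensable.
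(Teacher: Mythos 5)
Your proposal is correct in outline, but it follows a genuinely different route from the paper's. The paper (following Entringer--Boldrini) splits $\dfrac{\delta E}{\delta\phi}=M+N$ with $M(\phi)=\Delta^2\phi-\Delta\phi+\phi$, tests the phase difference with $M(\bar\phi)$, and works with the $\sH^2$-level energy $G(\bar\phi)=|\Delta\bar\phi|_2^2+|\nabla\bar\phi|_2^2+|\bar\phi|_2^2$ together with the $\sH^4$-level dissipation $\gamma|M(\bar\phi)|_2^2$. The decisive payoff of that choice is an \emph{exact cancellation}: writing $M(\phi_1)\nabla\phi_1-M(\phi_2)\nabla\phi_2=M(\phi_1)\nabla\bar\phi+M(\bar\phi)\nabla\phi_2$ and $w_1\cdot\nabla\phi_1-w_2\cdot\nabla\phi_2=w_1\cdot\nabla\bar\phi+\bar w\cdot\nabla\phi_2$, the pairing $\la M(\bar\phi)\nabla\phi_2,\bar w\ra$ from the fluid equation cancels identically against $-\la \bar w\cdot\nabla\phi_2,M(\bar\phi)\ra$ from the phase equation, so the fourth-order coupling term never has to be estimated at all; the genuinely nonlinear remainder is packaged in Lemma \ref{lemma.boundN} for $|N(\phi_1)-N(\phi_2)|_2$. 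You instead test with $-\Delta\bar\phi$ (an $\sH^1$ energy with $\sH^3$ dissipation $|\nabla\Delta\bar\phi|_2^2$), forgo the cancellation, and kill the top-order term $\la\Delta^2\bar\phi\,\nabla\phi_1,\bar w\ra$ by one integration by parts into $|\nabla\Delta\bar\phi|_2^2$, at the price of needing $\phi_1(t)\in\sH^4\subset W^{2,\infty}$ pointwise with $\|\phi_1\|_{\sH^4}^2\in\sL^1([0,T])$ -- which \eqref{eq.cond.uniqueness} does provide via $|\Delta^2\phi_1|_2^2$. Both strategies close; yours is more ``bare hands'', the paper's exploits the Hamiltonian structure of the coupling.

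Two bookkeeping caveats you should be aware of. First, your claim that $g_2$ involves only $|\Psi_2|_2^2$, $\|\phi_i\|_{\sH^2}^2$ and $\|w_1\|_{D(A)}^{4/3}$ understates the powers that actually appear: the cubic and $\mathcal{B}$-terms in $\bar\Psi$ produce factors of order $(1+\|\phi_1\|_{\sH^2}^6+\|\phi_2\|_{\sH^2}^6)\|\bar\phi\|_{\sH^2}$, and since your energy controls only $|\nabla\bar\phi|_2$ you must pay the interpolation $\|\bar\phi\|_{\sH^2}^2\leq c|\nabla\bar\phi|_2|\nabla\Delta\bar\phi|_2$ and then a Young exponent $4/3$ against the $\sH^3$ dissipation, which pushes the Gronwall weight up to powers of $\|\phi_i\|_{\sH^2}$ around $28/3$ unless you integrate those terms by parts once as well; this is exactly why \eqref{eq.cond.uniqueness} carries the exponent $8$ (the paper's $\sH^2$ energy avoids the extra $4/3$). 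The estimates can be rearranged to stay within the class, but this must be checked term by term. Second, your $g_1$ uses $\|w_2\|_{D(A)}^2\in\sL^1([0,T])$, which is not listed in \eqref{eq.cond.uniqueness}; it is however part of Definition \ref{def.sol} (every weak solution satisfies $w\in\sL^2([0,T];D(A))$ a.s.), and the paper's own bound $C_{\tilde\varepsilon}|\nabla w|_{\sH}^2|Aw_2|_{\sH}^2$ relies on the same fact, so this is acceptable provided you state it.
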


Since the the proof of this result is quite the same as in \cite{EntringerBoldrini},
for the reader's convenience we only give here the main ideas.
\begin{proof}
By Theorem \ref{th.compactness} and Theorem \ref{le.supbound}, there exists at least a solution $(\omega,\phi)$ satisfying \eqref{eq.cond.uniqueness}.
As usual, consider two solutions of the system $(w_{1},\phi_{1})$ and $(w_{2},\phi_{2})$ with the expected regularity stated before, 
and consider the difference  $(w, \phi)=(w_1,\phi_1)-(w_2,\phi_2)$ between these two solutions.
We shall show that $(w_1,\phi_1)=(w_2,\phi_2)$ on the full measure set  
\begin{equation} \label{eq.cond.uniqueness2}
 \left\{  \sum_{i=1}^2 \int_0^T\left(|w_i(t)|_V^2+ \left|\frac{\delta E}{\delta \phi}(\phi_i(t))\right|_2^2+|\phi_i|_{\sH^2}^8+|\Delta^2\phi_i|_2^2\right)\dd t<\infty\right\}.
\end{equation}
As in \cite{EntringerBoldrini}, we write
\[
  \dfrac{\delta E(\phi)}{\delta \phi}(\phi)=M(\phi)+N(\phi),
\]
where
\[ 
 M(\phi)=\Delta^2\phi-\Delta\phi+\phi
\]
\[
  N(\phi)=\dfrac{\delta E}{\delta \phi}(\phi)-M(\phi).
\]
Let us set $G(\phi)= |\Delta \phi|_2^2+|\nabla\phi|_2^2+|\phi|_2^2$.
The proof of the following lemma is easy and it is left to the reader.
\begin{Le} \label{le.G}
The function $G(\phi)$ defines a norm equivalent to the $\sH^2(\qspace)$ norm. 
That is, there exists $C>0$ such that it holds
\[
  \frac{1}{C}\|\phi\|_{\sH^2}^2\leq G(\phi)\leq C \|\phi\|_{\sH^2}^2,\qquad  \forall \phi\in \sH^2(\qspace)
\]
Moreover,  it holds
\[
 \int_0^T G(\phi) \ddt   =    \int_0^T\la M(\phi),\phi\ra\ddt,\qquad \forall \phi\in \sL^2([0,T];\sH^4(\qspace))\cap \{ \phi: \phi=\Delta\phi=0 \text{ on } \partial \qspace\}.
\]
\end{Le}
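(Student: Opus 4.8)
The plan is to treat the two assertions separately, both reducing to classical integration-by-parts and elliptic-regularity facts on the smooth bounded domain $\qspace$.

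For the norm equivalence, the upper bound $G(\phi)\leq C\|\phi\|_{\sH^2}^2$ is immediate: the terms $|\nabla\phi|_2^2$ and $|\phi|_2^2$ are dominated by $\|\phi\|_{\sH^2}^2$ by definition, while the Cauchy--Schwarz inequality gives $|\Delta\phi|_2^2=|\sum_i\partial_{ii}\phi|_2^2\leq N\sum_{i,j}|\partial_{ij}\phi|_2^2\leq N\|\phi\|_{\sH^2}^2$. The reverse inequality is the substantive direction. Here one must keep in mind that on all of $\sH^2(\qspace)$ the bound fails (any nonconstant harmonic function, such as $\phi=x^2-y^2$, has $\Delta\phi=0$ yet nonzero second derivatives), so the equivalence is to be read on the subspace carrying the homogeneous Dirichlet trace $\phi=0$ on $\partial\qspace$ --- precisely the situation in the uniqueness proof, where $\phi=\phi_1-\phi_2$ vanishes on $\partial\qspace$. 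On $\sH^2(\qspace)\cap\sH^1_0(\qspace)$ I would invoke the standard elliptic a priori estimate for the Dirichlet Laplacian on a smooth bounded domain, namely $\|\phi\|_{\sH^2}\leq C(|\Delta\phi|_2+|\phi|_2)$, which yields $\|\phi\|_{\sH^2}^2\leq C'G(\phi)$ at once.

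For the pointwise-in-time identity, I would fix $t$ and expand $\la M(\phi),\phi\ra=\la\Delta^2\phi,\phi\ra-\la\Delta\phi,\phi\ra+|\phi|_2^2$, exploiting the two boundary conditions $\phi=0$ and $\Delta\phi=0$ on $\partial\qspace$ together with the regularity $\phi(t)\in\sH^4(\qspace)$ that makes every integral below well defined. The zeroth-order term contributes $|\phi|_2^2$ directly. One integration by parts turns $-\la\Delta\phi,\phi\ra$ into $|\nabla\phi|_2^2$, the boundary term $\int_{\partial\qspace}\phi\,\partial_n\phi$ vanishing because $\phi=0$ on $\partial\qspace$. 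For the biharmonic term I would integrate by parts twice: the first step gives $\la\Delta^2\phi,\phi\ra=-\la\nabla\Delta\phi,\nabla\phi\ra$, the boundary contribution $\int_{\partial\qspace}\partial_n(\Delta\phi)\,\phi$ dropping again by $\phi=0$; the second step gives $-\la\nabla\Delta\phi,\nabla\phi\ra=|\Delta\phi|_2^2$, with the boundary contribution $\int_{\partial\qspace}(\Delta\phi)\,\partial_n\phi$ dropping this time by the second condition $\Delta\phi=0$. Summing the three contributions yields $\la M(\phi),\phi\ra=G(\phi)$, and integrating over $[0,T]$ (legitimate since $\phi\in\sL^2([0,T];\sH^4(\qspace))$) produces the claimed identity.

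The computations are entirely routine; the only points requiring attention are the restriction to functions with the homogeneous Dirichlet trace in the norm equivalence --- without it the lower bound is simply false --- and the bookkeeping of the two distinct boundary conditions, each of which kills a different one of the boundary terms arising in the double integration by parts for $\la\Delta^2\phi,\phi\ra$. No genuine obstacle arises beyond this care.
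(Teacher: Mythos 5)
Your proof is correct; the paper explicitly leaves this lemma to the reader, and your argument (elliptic regularity for the Dirichlet Laplacian for the lower bound, double integration by parts using the two boundary conditions for the identity) is exactly the intended one. Your observation that the lower bound $\frac{1}{C}\|\phi\|_{\sH^2}^2\leq G(\phi)$ fails on all of $\sH^2(\qspace)$ and requires the restriction to $\sH^2(\qspace)\cap\sH^1_0(\qspace)$ is a genuine and worthwhile correction to the statement as written; it is harmless for the application, since in the uniqueness argument $\phi=\phi_1-\phi_2$ with both $\phi_i=-1$ on $\partial\qspace$, so $\phi$ does carry the homogeneous Dirichlet trace.
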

For any $ \tilde v\in \sL^2([0,T];D(A))$, the couple $(w,\phi)$ satisfies
\begin{equation*}
\begin{cases}
\dd \la  w+\alpha^{2} A w,\tilde v\ra  =  \left(  \la -\nu A (w+\alpha^{2}Aw),\tilde v\ra + \la- \widetilde{B}(w_{1},w_{1}+\alpha^{2} Aw_{1})+\widetilde{B}(w_{2},w_{2}+\alpha^{2} Aw_{2}),\tilde v\ra \right.
\\
\qquad \left. +\la M( \phi_1),  \nabla \phi_1\cdot\tilde  v \ra - \la M( \phi_2), \nabla \phi_2\cdot\tilde  v \ra
 +\la N(\phi_1), \phi_1\cdot \tilde v\ra -\la N(\phi_2),\nabla\phi_2\cdot\tilde v\ra \right) \ddt & \text{in } [0,T]\times \qspace,\\
\dd  \phi  =  \left(-w_{1}\cdot \nabla \phi_{1}+w_{2}\cdot \nabla \phi_{2} 
-  M(\phi) -N(\phi_1)+N(\phi_2) \right)\ddt    
 & \text{in } [0,T]\times \qspace, \\
w(0)=0& \text{in } \qspace, \\
\phi(0)=0 & \text{in } \qspace. 
\end{cases}
\end{equation*}
Let us look at the second equation.
By multiplying with $M(\phi)$ and integrating over $[0,t]\times \qspace$ we find 
\begin{equation} \label{eq.phi.unique}
 \frac12  G(\phi(t)) = 
          -\int_0^t\la  w_{1}\cdot \nabla \phi_{1}-w_{2}\cdot \nabla \phi_{2},M(\phi)\ra\dd s
       -\gamma \int_0^t\left(\la \dfrac{\delta E(\phi_{1})}{\delta \phi}- \dfrac{\delta E(\phi_{2})}{\delta \phi},M(\phi) \ra   \right)  \dd s
\end{equation}
Here we used the fact that $E(\phi_i)<\infty$ implies $\phi_i\in \sH^2(\qspace)$ ( see \eqref{eq.ineq.E2} ).
Moreover, notice that $\phi=\Delta\phi=0$ on $\partial \qspace$ and \eqref{eq.cond.uniqueness} holds,  then we can apply the integration by parts in Lemma \ref{le.G}.

Since $w_i\in  \sL^2([0,T];D(A))$, we can set $\tilde v=w$ in the first equation.
By integrating over $[0,t]\times \qspace$ we find 
\begin{equation} \label{eq.w.unique}
\begin{split}
   \frac12 (|w(t)|^2_2   &+  \alpha^2|\nabla w(t)|_2^2) +\nu\int_0^t\left( |\nabla w|_2^2+\alpha^{2}|\Delta w|^2_2\right)\dd s  \\
    =&  \int_0^t\left(\la -\widetilde{B}(w_{1},w_{1}+\alpha^{2} Aw_{1})+\widetilde{B}(w_{2},w_{2}+\alpha^{2} Aw_{2}) ,w\ra \right)\dd s \\
     & +\int_0^t\left(\la \dfrac{\delta E(\phi_{1})}{\delta \phi}\nabla \phi_{1}-\dfrac{\delta E(\phi_{2})}{\delta \phi}\nabla \phi_{2},w\ra\right) \dd s  \\
    =&   \int_0^t\la \widetilde{B}(w,w+\alpha^2Aw),w_2\ra\dd s
         +\int_0^t\left(\la M(\phi_1)\nabla \phi_{1}-M(\phi_2)\nabla \phi_{2},w\ra\right)\dd s\\
    &+\int_0^t \left(\la N(\phi_1)\nabla \phi_{1}-N(\phi_2)\nabla \phi_{2},w\ra\right)\dd s
\end{split}
\end{equation}
Here we have used the properties of $\widetilde{B}$ ( see Proposition \ref{prop.1.1} ) which yield
\[
  \la \widetilde{B}(w_{1},w_{1}+\alpha^{2} Aw_{1}),w\ra-\la\widetilde{B}(w_{2},w_{2}+\alpha^{2} Aw_{2}, w\ra =-\la \widetilde{B}(w,w+\alpha^2Aw),w_2\ra.
\]
By adding \eqref{eq.w.unique} and \eqref{eq.phi.unique} we get 
\[
   \frac12 (|w(t)|^2_2+\alpha^2|\nabla w(t)|_2^2+|G(\phi(t))|_2^2) +\nu\int_0^t\left(|\nabla w|^2+\alpha^{2}|\Delta w|^2_2\right)\dd s
    +\gamma \int_0^t|M(\phi)|_2^2\dd s =  \int_0^t\mathcal{F}(w,\phi) \dd s 
\]
where
\begin{eqnarray*}
  \mathcal{F}(w,\phi)    &=&   \la \widetilde{B}(w,w+\alpha^2Aw),w_2\ra +\la M(\phi_1)\nabla \phi_{1}-M(\phi_2)\nabla \phi_{2},w\ra\\
    &&-\la  w_{1}\cdot \nabla \phi_{1}-w_{2}\cdot \nabla \phi_{2},M(\phi)\ra+\la N(\phi_1)\nabla \phi_{1}-N(\phi_2)\nabla \phi_{2},w\ra\\
    &&  -\gamma\la N(\phi_1)- N(\phi_2),M(\phi)\ra 
\end{eqnarray*}
As in \cite{EntringerBoldrini}, we have to estimate each term of $\mathcal{F}(w,\phi)$. 
A key role is played by the following result, which is similar to Lemma 5.2 of \cite{EntringerBoldrini}.
The main difference is that in \cite{EntringerBoldrini} the solution $\phi$ belongs to ${\mathcal C}^0([0,T];\sH^2)$.
In our case, we are able to prove only $\phi\in L^p([0,T];\sH^2)$.   
\begin{Le} \label{lemma.boundN}
 Let $\phi_1$, $\phi_2\in \sH^2(\qspace)$ such that $\phi_i+1=\Delta\phi_i=0$ on $\partial \qspace$, $i=1,2$. 
 Then there exists 
 $c>0$, independent by $\phi_1$, $\phi_2$ such that 
\begin{equation}  \label{eq.boundN}
  |N(\phi_1)-N(\phi_2)|_2\leq c\left(1+\|\phi_1\|_{\sH^2}^6+\|\phi_2\|_{\sH^2}^6\right)\|\phi_1-\phi_2\|_{\sH^2}.
\end{equation}
\end{Le}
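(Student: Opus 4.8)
The plan is to write $\psi=\phi_1-\phi_2$ and to bound $|N(\phi_1)-N(\phi_2)|_2$ by splitting $N$ into its constituent pieces. Recall from \eqref{eq.varE2} that (with $k=\varepsilon=1$)
\[
 \frac{\delta E}{\delta\phi}(\phi)=\Delta^2\phi-\Delta(\phi^3-\phi)+(3\phi^2-1)f(\phi)+M_1(\mathcal{A}(\phi)-a)+M_2(\mathcal{B}(\phi)-b)f(\phi),
\]
where $f(\phi)=-\Delta\phi+\phi^3-\phi$. Subtracting $M(\phi)=\Delta^2\phi-\Delta\phi+\phi$ cancels exactly the biharmonic term, so that
\[
 N(\phi)=-\Delta(\phi^3)+2\Delta\phi-\phi+(3\phi^2-1)f(\phi)+M_1(\mathcal{A}(\phi)-a)+M_2(\mathcal{B}(\phi)-b)f(\phi).
\]
The crucial structural observation is that $N$ contains no derivative of $\phi$ of order higher than two (the term $\Delta(\phi^3)$ being the worst), which is precisely why the estimate can be closed in the $\sH^2$ norm alone. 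Throughout I shall use repeatedly the embeddings valid in dimension $N\le3$, namely $\sH^2(\qspace)\hookrightarrow \sL^\infty(\qspace)$, $\sH^2(\qspace)\hookrightarrow W^{1,4}(\qspace)$ and $\sH^1(\qspace)\hookrightarrow \sL^6(\qspace)$, together with Hölder's inequality.

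Before the term-by-term analysis I would record two difference estimates. First, writing $\phi_1^3-\phi_2^3=\psi(\phi_1^2+\phi_1\phi_2+\phi_2^2)$, one gets
\[
 |f(\phi_1)-f(\phi_2)|_2\le |\Delta\psi|_2+|\psi|_\infty|\phi_1^2+\phi_1\phi_2+\phi_2^2|_2+|\psi|_2\le c\left(1+\|\phi_1\|_{\sH^2}^2+\|\phi_2\|_{\sH^2}^2\right)\|\psi\|_{\sH^2},
\]
while the bound $|f(\phi)|_2\le|\Delta\phi|_2+|\phi|_6^3+|\phi|_2\le c(1+\|\phi\|_{\sH^2}^3)$ follows from the computation already used for \eqref{eq.ineq.E2}. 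Second, since $\mathcal{B}(\phi)=\tfrac12|\nabla\phi|_2^2+\tfrac14|\phi^2-1|_2^2$, expanding the quadratic and quartic differences and telescoping yields $|\mathcal{B}(\phi)-b|\le c(1+\|\phi\|_{\sH^2}^4)$ and
\[
 |\mathcal{B}(\phi_1)-\mathcal{B}(\phi_2)|\le c\left(1+\|\phi_1\|_{\sH^2}^3+\|\phi_2\|_{\sH^2}^3\right)\|\psi\|_{\sH^2}.
\]

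With these in hand the remaining work is routine bookkeeping. For the leading term I would use $\Delta(\phi^3)=3\phi^2\Delta\phi+6\phi|\nabla\phi|^2$ and telescope each product, for instance $\phi_1^2\Delta\phi_1-\phi_2^2\Delta\phi_2=\psi(\phi_1+\phi_2)\Delta\phi_1+\phi_2^2\Delta\psi$ and $\phi_1|\nabla\phi_1|^2-\phi_2|\nabla\phi_2|^2=\psi|\nabla\phi_1|^2+\phi_2\nabla(\phi_1+\phi_2)\cdot\nabla\psi$; bounding the $\sL^\infty$, $\sL^4$ and $\sL^2$ factors via the embeddings above gives a contribution of degree at most two in the $\sH^2$ norms. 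The linear terms $2\Delta\phi-\phi$ give $c\|\psi\|_{\sH^2}$, and $\mathcal{A}(\phi_1)-\mathcal{A}(\phi_2)=\int_\qspace\psi\,\ddx$ is controlled by $c|\psi|_2$. For $(3\phi^2-1)f(\phi)$ I would split $(3\phi_1^2-1)f(\phi_1)-(3\phi_2^2-1)f(\phi_2)=3\psi(\phi_1+\phi_2)f(\phi_1)+(3\phi_2^2-1)(f(\phi_1)-f(\phi_2))$ and combine $|f(\phi_1)|_2\le c(1+\|\phi_1\|_{\sH^2}^3)$ with the first difference estimate above, producing at most degree five.

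The main obstacle, and the source of the sharp power six, is the surface-area penalty term $M_2(\mathcal{B}(\phi)-b)f(\phi)$. Writing
\[
 (\mathcal{B}(\phi_1)-b)f(\phi_1)-(\mathcal{B}(\phi_2)-b)f(\phi_2)=\bigl(\mathcal{B}(\phi_1)-\mathcal{B}(\phi_2)\bigr)f(\phi_1)+(\mathcal{B}(\phi_2)-b)\bigl(f(\phi_1)-f(\phi_2)\bigr),
\]
the first summand is bounded in $\sL^2$ by $|\mathcal{B}(\phi_1)-\mathcal{B}(\phi_2)|\,|f(\phi_1)|_2\le c(1+\|\phi_1\|_{\sH^2}^3+\|\phi_2\|_{\sH^2}^3)(1+\|\phi_1\|_{\sH^2}^3)\|\psi\|_{\sH^2}$ and the second by $|\mathcal{B}(\phi_2)-b|\,|f(\phi_1)-f(\phi_2)|_2\le c(1+\|\phi_2\|_{\sH^2}^4)(1+\|\phi_1\|_{\sH^2}^2+\|\phi_2\|_{\sH^2}^2)\|\psi\|_{\sH^2}$; in both cases a Young inequality absorbs the mixed products into $c(1+\|\phi_1\|_{\sH^2}^6+\|\phi_2\|_{\sH^2}^6)$. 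Collecting the contributions of all five pieces, none of which exceeds degree six, gives \eqref{eq.boundN}. I expect the only genuinely delicate points to be the bookkeeping of the highest-order term $\Delta(\phi^3)$, where one must check that both $|\nabla\psi|_4$ and $|\Delta\psi|_2$ are controlled by $\|\psi\|_{\sH^2}$, and the verification that the bending/surface term indeed saturates at degree six rather than exceeding it.
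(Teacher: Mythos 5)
Your proof is correct and follows essentially the same route as the paper's: expand $N$, telescope each difference of products, control $|\cdot|_\infty$ and $|\nabla\cdot|_4$ factors by Sobolev embeddings, and observe that the degree-six power is forced by the term $M_2(\mathcal{B}(\phi)-b)f(\phi)$ via the bounds $|\mathcal{B}(\phi_1)-\mathcal{B}(\phi_2)|\lesssim(1+\|\phi_1\|_{\sH^2}^3+\|\phi_2\|_{\sH^2}^3)\|\psi\|_{\sH^2}$ and $|f(\phi)|_2\lesssim 1+\|\phi\|_{\sH^2}^3$. The only cosmetic difference is that you invoke $\sH^2(\qspace)\hookrightarrow \sL^\infty(\qspace)$ where the paper uses a Poincar\'e-type bound $|\phi+1|_\infty\leq \Cp|\nabla\phi|_2$; both close the same estimates.
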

\begin{proof}
By \eqref{eq.varE2} we can write
\[
   N(\phi)=  -\Delta\phi^3+2\Delta \phi +3\phi^2f(\phi)-f(\phi)-\phi +M_1(\mathcal{A}(\phi)-a)+M_2(\mathcal{B}(\phi)-b)f(\phi).
\]
Then,
\begin{eqnarray*}
|  N(\phi_1)- N(\phi_2)|_2&\leq& |\Delta(\phi_1^3-\phi_2^3)|_2+2|\Delta (\phi_1-\phi_2)|_2 +3|\phi_1^2f(\phi_1)-\phi_2^2f(\phi_2)|_2+\\
  &&  +(1+M_2b)|f(\phi_1)-f(\phi_2)|_2+ |\phi_1-\phi_2|_2 +M_1|\mathcal{A}(\phi_1)-\mathcal{A}(\phi_1)|_2+\\
  && +M_2|\mathcal{B}(\phi_1)f(\phi_1)-\mathcal{B}(\phi_2)f(\phi_2)|_2=I_1+\ldots+I_7.
\end{eqnarray*}
Let us proceed by estimating each term. 
For $I_1$, we set $\tilde \phi=\phi_1^2+\phi_1\phi_2+\phi_2^2$. .
Using Poincaré inequality, it holds $|\phi_i|_\infty\leq (1+\Cp|\phi_i|_{\sH^1})$, $i=1,2$ where $\Cp>0$ is the Poincaré constant. 
We deduce that there exists $c>0$ such that
\begin{equation} \label{eq.tildephi1}
   |\tilde \phi|_\infty\leq  \sum_{i,j=1}^2|\phi_i|_\infty|\phi_j|_\infty \leq \sum_{i,j=1}^2(1+\Cp\|\phi_i\|_{\sH^1})(1+\Cp\|\phi_j\|_{\sH^1}) \leq c(1+\|\phi_1\|_{\sH^1}^2+\|\phi_2\|_{\sH^1}^2).
\end{equation}
Similarly, since $|\nabla \tilde \phi|\leq   \sum_{i,j=1}^2|\phi_i||\nabla\phi_j|$ by Poincaré inequality and the Sobolev embedding $\sH^1(Q)\subset \sL^4(Q)$ there exists $c>0$ such that
\begin{eqnarray} 
   |\nabla \tilde \phi|_4 &\leq& \sum_{i,j=1}^2|\phi_i|_\infty |\nabla \phi_j|_4 \leq \sum_{i,j=1}^2(1+\Cp\|\nabla\phi_i\|_{\sH^1}) \|\nabla \phi_j\|_{\sH^1} \notag
\\
  &\leq& \sum_{i,j=1}^2(1+\Cp\|\phi_i\|_{\sH^2} )\|\phi_j\|_{\sH^2} \leq  (1+\|\phi_1\|_{\sH^2}^2+\|\phi_2\|_{\sH^2}^2).\label{eq.tildephi2}
\end{eqnarray}
Moreover, since $\Delta\tilde\phi = \sum_{\stackrel{i,j=1}{i\leq j}}^2\Delta(\phi_i\phi_j)= \sum_{\stackrel{i,j=1}{i\leq j}}^2\left((\Delta\phi_i)\phi_j+\nabla\phi_i\nabla\phi_j+\phi_i(\Delta\phi_j)\right)$,
still using the Poincaré inequality and the Sobolev embedding $\sH^1(\qspace)\subset \sL^4(\qspace)$  we get
\begin{eqnarray}
   |\Delta\tilde\phi|_2 &\leq&  \sum_{\stackrel{i,j=1}{i\leq j}}^2\left(|\Delta\phi_i|_2|\phi_j|_\infty+|\nabla\phi_i|_4|\nabla\phi_j|_4+|\phi_i|_\infty|\Delta\phi_j|_2\right)\notag
\\
  &\leq& \sum_{\stackrel{i,j=1}{i\leq j}}^2\left(|\Delta\phi_i|_2(\Cp\|\phi_j\|_{\sH^1}+1)+\|\phi_i\|_{\sH^2}\|\phi_j\|_{\sH^2}+(\Cp\|\phi_i\|_{\sH^1}+1)|\Delta\phi_j|_2\right)\notag
\\
   &\leq& c(1+\|\phi_1\|^2_{\sH^2}+\|\phi_2\|^2_{\sH^2})\label{eq.tildephi3}
\end{eqnarray}
where $c>0$ is independent by $\phi_1$, $\phi_2$.
By taking in mind \eqref{eq.tildephi1},\eqref{eq.tildephi2}, \eqref{eq.tildephi3} there exists $c>0$ such that

\begin{eqnarray*}
I_1=|\Delta(\phi_1^3-\phi_2^3)|_2&=& |\Delta(\phi  \tilde \phi)|_2 \\
                             &\leq& |(\Delta\phi)\tilde \phi|_2+ 2|\nabla\phi\cdot\nabla \tilde \phi|_2+| \phi(\Delta\tilde\phi)|_2\\
                             &\leq& |\Delta\phi|_2|\tilde \phi|_\infty+ 2|\nabla\phi|_4|\nabla \tilde \phi|_4+| \phi|_\infty|\Delta\tilde\phi|_2\\
                             &\leq& c(1+\|\phi_1\|^2_{\sH^2}+\|\phi_2\|^2_{\sH^2})(|\Delta\phi|_2+|\nabla\phi|_4+| \phi|_\infty)\\
                             &\leq& c(1+\|\phi_1\|^2_{\sH^2}+\|\phi_2\|^2_{\sH^2})\|\phi_1-\phi_2\|_{\sH^2}.
\end{eqnarray*}
In the last inequality we have used the Sobolev embedding $\sH^1(\qspace)\subset \sL^4(\qspace)$ and the Poincaré inequality $|\phi_1-\phi_2|_\infty\leq \Cp|\phi_1-\phi_2|_{\sH^1}$.
For $I_2$, we have clearly $I_2\leq c|\phi_1-\phi_2|_{\sH^2}$.
For $I_3$ we can write
\[
   I_3= 3|\phi_1^2f(\phi_1)-\phi_2^2f(\phi_2)|_2   \leq   3|\phi_1^2(f(\phi_1)-f(\phi_2))|_2+3|(\phi_1^2-\phi_2^2)f(\phi_2)|_2=J_1+J_2.
\]
For $J_1$, we have
\[
   J_1\leq 3|\phi_1|_\infty^2|f(\phi_1)-f(\phi_2))|_2\leq 3|\phi_1|_\infty^2(|\Delta(\phi_1-\phi_2)|_2+|\phi_1^3-\phi_2^3|_2+|\phi_1-\phi_2|_2).
\]
With a similar calculus done for $I_1$, we have $|\phi_1^3-\phi_2^3|_2\leq |\tilde \phi|_\infty|\phi_1-\phi_2|_2$. 
Then, using Poincaré inequality there exists $c>0$ such that $|\phi_1^3-\phi_2^3|_2\leq c(1+\|\phi_1\|_{\sH^1}^2+\|\phi_2\|_{\sH^1}^2)|\phi_1-\phi_2|_2$.
Then, for some $c>0$ independent by $\phi_1$, $\phi_2$ we obtain
\[
     J_1\leq c(1+\|\phi_1\|_{\sH^1}^2)(1+\|\phi_1\|_{\sH^1}^2+\|\phi_2\|_{\sH^1}^2)\|\phi_1-\phi_2\|_{\sH^2}.
\]
For $J_2$, since $H^1(\qspace)\subset \sL^6(\qspace)$ we have
\begin{eqnarray*}
  J_2&\leq& 3|\phi_1-\phi_2|_\infty |\phi_1+\phi_2|_\infty\left(|\Delta\phi_2|_2+|\phi_2|_6^3+|\phi_2|_2   \right) \\
      &\leq&c\|\phi_1-\phi_2\|_{\sH^1}(1+ \|\phi_1\|_{\sH^1}+\|\phi_2\|_{\sH^1})\left(\|\phi_2\|_{\sH^2}+\|\phi_2\|_{\sH^1}^3+|\phi_2|_2   \right) \\
\end{eqnarray*}
Finally, using Young inequality repeatedly, we find that for some $c>0$ it holds
\[
   I_3\leq c\left(1+|\phi_1|_{\sH^2}^6+|\phi_2|_{\sH^2}^6\right)|\phi_1-\phi_2|_{\sH^2}.
\]
For $I_4$, we can perform a calculus as done for $J_1$ to obtain, for some $c>0$
\[
     I_4=(1+M_2b)|f(\phi_1)-f(\phi_2)|_2 \leq c (1+\|\phi_1\|_{\sH^1}^2+\|\phi_2\|_{\sH^1}^2)\|\phi_1-\phi_2\|_{\sH^2}.
\]
Clearly, for $I_5$ and $I_6$ we have
\[
  I_5+I_6=|\phi_1-\phi_2|_2+M_1|A(\phi_1)-A(\phi_2)|_2\leq |\phi_1-\phi_2|_2+M_1|\qspace|^{1/2}|A(\phi_1-\phi_2)|\leq (1+M_1|Q|)|\phi_1-\phi_2|_2.
\]
For $I_7$,
\begin{eqnarray*}
 I_7&=&M_2|\mathcal{B}(\phi_1)f(\phi_1)-\mathcal{B}(\phi_2)f(\phi_2)|_2 \\
 &\leq& \mathcal{B}(\phi_1)|f(\phi_1)-f(\phi_1)|_2+|\mathcal{B}(\phi_1)-\mathcal{B}(\phi_2)||f(\phi_2)|_2\\
 &=& K_1+K_2
\end{eqnarray*}
For $K_1$, since $\mathcal{B}(\phi)=\frac12|\nabla\phi|_2^2+\frac14|\phi^2-1|_2^2$ we have
\[
   \mathcal{B}(\phi_1)   \leq \frac12|\nabla \phi_1|_{2}^2+\frac14(|\phi_1|_4^4+|\qspace|)
   \leq \frac12|\nabla \phi_1|_{2}^2+\frac14(\|\phi_1\|_{\sH^1}^4+|\qspace|).
\]
Then, there exists $c>0$ such that  $\mathcal{B}(\phi_1)\leq c(1+\|\phi_1\|_{\sH^1}^4)$.
In order to estimate $|f(\phi_1)-f(\phi_2)|_2$ we can argue as done before for the term $J_1$ to obtain 
\[
  |f(\phi_1)-f(\phi_2)|_2\leq c(1+\|\phi_1\|_{\sH^1}^2+\|\phi_2\|_{\sH^1}^2)\|\phi_1-\phi_2\|_{\sH^2}
\]
Then, by using Young inequality repeatedly, there exists $c>0$ such that
\[
    K_1\leq c(1+\|\phi_1\|_{\sH^1}^6+\|\phi_2\|_{\sH^1}^6) \|\phi_1-\phi_2\|_{\sH^2}.
\]
Before consider $K_2$, let us observe that by the expression of $\mathcal{B}(\phi_1)$ we have
\begin{eqnarray*}
    |\mathcal{B}(\phi_1) -\mathcal{B} (\phi_2)| &\leq&   \frac12\left|\la \nabla(\phi_1-\phi_2),\nabla(\phi_1+\phi_2)\ra\right| +\frac14\left|\la \phi_1-\phi_2,(\phi_1+\phi_2)(\phi_1^2+\phi_2^2-2)\ra  \right| \\
         &\leq&   \frac12\|\phi_1-\phi_2\|_{\sH^1}(\|\phi_1\|_{\sH^1}+\|\phi_2\|_{\sH^1}) +\frac14|\phi_1-\phi_2|_2|(\phi_1+\phi_2)(\phi_1^2+\phi_2^2-2)|_2.
\end{eqnarray*}
By Young inequality $ab^2\leq a^2/3+2b^3/3$ we get $(\phi_1+\phi_2)(\phi_1^2+\phi_2^2-2)\leq 2\phi_1^3+2\phi_2^3+\phi_1+\phi_2$.
Therefore, the last expression is bounded by 
\[
   |\mathcal{B}(\phi_1) -\mathcal{B} (\phi_2)| \leq \|\phi_1-\phi_2\|_{\sH^1}\left(\frac12\|\phi_1+\phi_2\|_{\sH^1} +\frac{|Q|^{1/2}}{4} \left(|\phi_1|_\infty^3+2|\phi_2|_\infty^3+|\phi_1|_2+|\phi_2|_\infty\right)\right).
\]
Since by Poincaré inequality we have $|\phi_i|_\infty\leq |\phi_i+1|_\infty+1\leq \Cp\|\phi_i\|_{\sH^1}+1$, we deduce that there exists $c>0$ such that
\[
   |\mathcal{B}(\phi_1) -\mathcal{B} (\phi_2)|\leq c\|\phi_1-\phi_2\|_{\sH^1}\left(1+\|\phi_1\|_{\sH^1}^3+2\|\phi_2\|_{\sH^1}^3\right)
\]
Moreover, since $f(\phi)=-\Delta\phi+\phi(\phi^2-1)$ and the continuous embedding $\sH^1(\qspace)\subset \sL^6(\qspace)$ holds, there exists $c>0$ such that 
\[
    |f(\phi_2)|_2\leq |\Delta\phi_2|_2+|\phi_2|_6^3+|\phi_2|_2\leq \|\phi_2\|_{\sH^2}+|\phi_2|_{\sH^1}^3+|\phi_2|_2\leq c(1+\|\phi_2\|_{\sH^2}^3)
\]
By the previous results, we deduce that for $K_2$ we have
\[
  K_2\leq c|\phi_1-\phi_2|_{\sH^1}\left(1+\|\phi_1\|_{\sH^1}^3+2\|\phi_2\|_{\sH^1}^3\right)(1+\|\phi_2\|_{\sH^2}^3)
\]
Then, for some $c>0$ independent by $\phi_1,\phi_2$ we obtain the bound
\[
   K_2\leq c|\phi_1-\phi_2|_{\sH^1}\left(1+\|\phi_1\|_{\sH^2}^6+|\phi_2|_{\sH^2}^6\right).
\]
Taking into account the estimates on $K_1$ and $K_2$ we get that for some $c>0$ we have
\[
  I_7\leq c\left(1+\|\phi_1\|_{\sH^2}^6+\|\phi_2\|_{\sH^2}^6\right)\|\phi_1-\phi_2\|_{\sH^2}.
\]
Finally, taking into account the estimates on $I_1,\ldots,I_7$, we get that there exits $c>0$ such that \eqref{eq.boundN} holds.
\end{proof}
By arguing as in \cite{EntringerBoldrini} (see equations (60)--(67)), the term $\mathcal{F}(w,\phi)$ is bounded by
\begin{eqnarray*}
  \mathcal{F}(w,\phi) &\leq& C_{\tilde \varepsilon}|\nabla w|_{\sH}^2|Aw_2|_{\sH}^2+ \tilde \varepsilon|w+\alpha^2Aw|_{\sH}^2 \\
		      && + \tilde \varepsilon\|w\|_{\sV}^2+C_{\tilde \varepsilon}|M(\phi_1)|_2^2\|\phi\|_{\sH^2}^2 \\
		      && + \tilde \varepsilon|M(\phi)|_2^2+C_{\tilde \varepsilon}\|w_1\|_{\sV}^2\|\phi\|_{\sH^2}^2 \\
		      && + \tilde \varepsilon\|w\|_{\sV}^2+C_{\tilde \varepsilon}|N(\phi_1)|_2^2\|\phi\|_{\sH^2}^2 \\
		      && + \tilde \varepsilon\|w\|_{\sV}^2+C_{\tilde \varepsilon}|N(\phi_1)-N(\phi_2)|_2^2\|\phi_2\|_{\sH^2}^2\\
		      && + \tilde \varepsilon|M(\phi)|_2^2+C_{\tilde \varepsilon}|N(\phi_1)-N(\phi_2)|_2^2,
\end{eqnarray*}
where $ \tilde \varepsilon>0$ can be chosen arbitrarly and $C_{\tilde \varepsilon}>0$ depends only on $\tilde \varepsilon>0$.
By \eqref{eq.boundN} there exists $c>0$ such that
\begin{eqnarray*}
  \mathcal{F}(w,\phi) &\leq&  3\tilde \varepsilon\|w\|_{\sV}^2+2\tilde \varepsilon|M(\phi)|_2^2+\tilde \varepsilon|w+\alpha^2Aw|_{\sH}^2+ C_{\tilde \varepsilon}|\nabla w|_{\sH}^2\|Aw_2\|_{\sH}^2\\
		       && +\left(C_{\tilde \varepsilon}(|M(\phi_1)|_2^2+\|w_1\|_{\sV}^2 +|N(\phi_1)|_2^2+c(1+\|\phi_2\|_{\sH^2}^2)(1+\|\phi_1\|_{\sH^2}^6+\|\phi_2\|_{\sH^2}^6)\right)\|\phi\|_{\sH^2}^2.
\end{eqnarray*}
Since $|N(\phi)|_2\leq |\dfrac{\delta E}{\delta \phi}(\phi)|_2+|M(\phi)|_2$ and $|M(\phi)|_2\leq c(|\Delta^2\phi|_2+\|\phi\|_{\sH^2})$, there exists $c_1,c_2>0$, depending only on $\tilde \varepsilon$ such that
  \begin{eqnarray*}
 \mathcal{F}(w,\phi) &\leq&  3\tilde \varepsilon\|w\|_{\sV}^2+2\tilde \varepsilon|M(\phi)|_2^2+\tilde \varepsilon\|w+\alpha^2Aw\|_{\sH}^2+ C_{\tilde \varepsilon}\|\nabla w\|_{\sH}^2\|Aw_2\|_{\sH}^2\\
		       && +c_1\left(|\Delta^2\phi_1|_2^2+\|w_1\|_{\sV}^2 +\left|\dfrac{\delta E}{\delta \phi}(\phi_1)\right|_2^2+1+\|\phi_1\|_{\sH^2}^8+\|\phi_2\|_{\sH^2}^8\right)\|\phi\|_{\sH^2}^2\\
		      &\leq& 3\tilde \varepsilon\|w\|_{\sV}^2+2\tilde \varepsilon|M(\phi)|_2^2+\tilde \varepsilon\|w+\alpha^2Aw\|_{\sH}^2\\
		      && + c_2\left(|\Delta^2\phi_1|_2^2+\|w_1\|_{\sV}^2 +\left|\dfrac{\delta E}{\delta \phi}(\phi_1)\right|_2^2+\|\phi_1\|_{\sH^2}^8+\|\phi_2\|_{\sH^2}^8+1\right)(G(\phi)+\|w\|_{\sH}^2+\alpha^2\|\nabla w\|_{\sH}^2)
\end{eqnarray*} 
Consequently, for $\tilde \varepsilon$ small enough, 
it holds 
\begin{multline*}
 \frac12 \left(|w(t)|_2^{2}+ \alpha ^{2}|\nabla w(t)|_2^{2} + G(\phi) (t)\right)
+ \frac{\gamma}{2} \int_{0}^{T}  |M(\phi)|_2^{2}   \dd t + \frac{\nu}{2}  \int_{0}^{T}   \left(|\nabla w(t)|_2^{2}+ \alpha ^{2}|A w(t)|_2^{2}\right)  \ddt\\
\leq\frac12 \int_{0}^{T} H(t)\left(   |w(t)|_2^{2}+ \alpha ^{2}|\nabla w(t)|_2^{2}   + G(\phi) (t)\right)\dd t.
\end{multline*} 
Here,  $H$ (up to a multiplicative constant) is explicitly given by
\[
    |\Delta^2\phi_1|_2^2+\|w_1\|_{\sV}^2 +\left|\dfrac{\delta E}{\delta \phi}(\phi_1)\right|_2^2+\|\phi_1\|_{\sH^2}^8+\|\phi_2\|_{\sH^2}^8+1.
\]
By the conditions \eqref{eq.cond.uniqueness}, the quantity $\int_0^TH(t)\,\dd t$ is bounded.
Then we can apply Gronwall's lemma to deduce 
\[  
     |w(t)|_2^{2}+ \alpha ^{2}|\nabla w(t)|_2^{2} + G(\phi) (t) \leq 0 
\] 
which implies $(w_1,\phi_1)=(w_2,\phi_2)$ on the full measure set defined  in \eqref{eq.cond.uniqueness2}.
\end{proof}


\end{document}